\theoremstyle{plain}
\newtheorem{theorem}{Theorem}[section]
\newtheorem{proposition}[theorem]{Proposition}
\newtheorem{lemma}[theorem]{Lemma}
\newtheorem{corollary}[theorem]{Corollary}
\newtheorem{remark}[theorem]{Remark}
\newtheorem{assumption}[theorem]{Assumption}
\newcommand{\R}{\mathbb{R}}
\DeclareMathOperator*{\argmin}{arg\,min}
\DeclareMathOperator{\interior}{int}
\DeclareMathOperator{\dom}{dom}
\DeclareMathOperator{\bfdom}{{\normalfont \textbf{dom}}}
\DeclareMathOperator{\val}{val}
\DeclareMathOperator{\prox}{prox}
\DeclareMathOperator{\proj}{proj}
\begin{document}

\title{Discrete potential mean field games: duality and numerical resolution\footnote{This work was supported by a public grant as part of the
Investissement d'avenir project, reference ANR-11-LABX-0056-LMH,
LabEx LMH, and by the FIME Lab (Laboratoire de Finance des March\'{e}s de l'Energie), Paris.}}

\author{J.~Fr{\'e}d{\'e}ric Bonnans\footnote{
    Universit\'{e} Paris-Saclay, CNRS, CentraleSup\'{e}lec, Inria, Laboratoire des signaux et syst\`{e}mes, 91190, Gif- sur-Yvette, France. E-mails: \href{mailto:frederic.bonnans@inria.fr}{frederic.bonnans@inria.fr}, \href{mailto:laurent.pfeiffer@inria.fr}{laurent.pfeiffer@inria.fr}.}
\and
Pierre Lavigne \footnote{
CMAP UMR 7641, Ecole polytechnique, route de Saclay, 91128, Palaiseau Cedex, Institut Polytechnique de Paris, France. E-mail: \href{mailto:pierre.lavigne@polytechnique.edu}{pierre.lavigne@polytechnique.edu}.}
\and
Laurent Pfeiffer\textsuperscript{\textdagger}}

\maketitle

\begin{abstract}
We propose and investigate a general class of discrete time and finite state space mean field game (MFG) problems with potential structure.
Our model incorporates interactions through
a congestion term and a price variable.
It also allows hard constraints on the distribution of the agents.
We analyze the connection between the MFG problem and two optimal control problems in duality.
We present two families of numerical methods and detail their implementation: (i) primal-dual proximal methods (and their extension with nonlinear proximity operators), (ii) the alternating direction method of multipliers (ADMM) and a variant called ADM-G. We give some convergence results. Numerical results are provided for two examples with hard constraints.
\end{abstract}

\paragraph{Key-words:} mean field games, dynamic programming, Kolmogorov equation, duality theory, primal-dual optimization, ADMM, ADM-G.

\paragraph{AMS classification:} 49N80, 49N15, 90C25, 91A16, 91A50.

\section{Introduction}

The class of mean field game (MFG) problems was introduced by J.-M.~Lasry and P.-L.~Lions in \cite{LL06cr1,LL06cr2,LL07mf} and M.~Huang, R.~Malham\'e, and P.~Caines in \cite{HCMieeeAC06} to study interactions among a large population of agents.
The agents of the game optimize their own dynamical system with respect to a criterion; the criterion is parameterized by some endogenous coupling terms. These coupling terms are linked to the collective behavior of all agents and thus induce an interaction between them. It is assumed that an isolated agent has no impact on the coupling terms and that all agents are identical.
At a mathematical level, MFGs typically take the form of a system of coupled equations: a dynamic programming equation (characterizing the optimal behavior of the agents), a Kolmogorov equation (describing the distribution of the agents), and coupling equations.

In this work we study a class of discrete time and finite state space mean field games with potential structure.
The dynamical system of each agent is a Markov chain, with controlled probability transitions.
Few publications deal with fully discrete models; in a seminal work, D.~Gomes, J.~Mohr, and R.~R.~Souza \cite{GomeMohrSouza} have studied the existence of a Nash equilibrium via a fixed point approach and investigated the long-term behavior of the game.
The proof relies on a monotonicity assumption for the congestion term. In \cite{hadikhanloo2019finite}, in a similar setting, the convergence of the fictitious play algorithm is established. 
In addition \cite{hadikhanloo2019finite} proves 
the convergence of a discrete mean field game problem (with an entropic regularization of the running cost) toward a continuous first-order mean field game.

Potential (also called variational) MFGs are coupled systems which can be interpreted as first-order conditions of two control problems in duality whose state equations are respectively a Kolmogorov equation and a dynamic programming equation. The primal problem (involving the Kolmogorov equation) can be interpreted as a stochastic optimal control problem with cost and constraints on the law of the state and the control. Its numerical resolution is thus of interest beyond the context of MFGs.

\paragraph{\textbf{Framework.}}
In our model, the agents interact with each other via two coupling terms: a congestion variable $\gamma$ and a price variable $P$. The congestion $\gamma$ is linked to the distribution of the agents via the subdifferential of a proper convex and l.s.c.\@ potential $F$. The price $P$ is linked to the joint law of states and controls of the agents via the subdifferential of a proper convex and l.s.c.\@ potential $\phi$.
A specificity of our discrete model is that the potentials $F$ and $\phi$ can take the value $+\infty$ and thus induce constraints on the distribution of the agents, referred to as hard constraints.
In the continuous case, four classes of variational MFGs can be identified. Our model is general enough to be seen as the discrete counterpart of these four cases.
Case 1: MFGs with monotone congestion terms ($F$ is differentiable, $\phi=0$). The first variational formulation was given in \cite{LL06cr2} and has been widely studied in following works \cite{benamou2017variational,cardaliaguet2015second,cardaliaguet2016first,meszaros2015variational,prosinski2017global}. 
Case 2: MFGs with density constraints ($F$ has a bounded domain, $\phi=0$). These models are of particular interest for describing crowd motions.
The coupling variable $\gamma$ has there an incentive role. The reader can refer to  \cite{cardaliaguet2016first,meszaros2015variational,santambrogio2011modest,santambrogio2018crowd}.
Case 3: MFGs with Cournot interactions ($F=0$, $\phi$ is differentiable). In this situation, each agent optimally chooses a quantity to be sold at each time step of the game. Interactions with the other players occur through the gradient of $\phi$ which maps the mean strategy (the market demand) to a market price.
See for example \cite{BHP-schauder,graber2015existence,graber2020nonlocal,graber2018variational,graber2020weak}.
Case 4: MFGs with price formation ($F=0$, $\phi$ has a bounded domain). These models incorporate a hard constraint on the demand. The price variable is the associated Lagrange multiplier and has a incentive role. We refer to \cite{saude2020mean}.

The first part of the article is devoted to the theoretical analysis of the MFG system.
We first introduce a potential problem,
shown to be equivalent to a convex problem involving the Kolmogorov equation via a change of variable, similar to the one widely employed in the continuous setting (e.g.\@ in \cite{benamou2000computational}).
Under a suitable qualification condition, we establish a duality result between this problem and an optimal control problem involving the dynamic programming equation. We show the existence of solutions to these problems and finally we show the existence of a solution to the MFG system. A uniqueness result is proved (when $F$ and $\phi$ are differentiable).

The second part of the article is devoted to the numerical resolution of the MFG system. We focus on two families of methods: primal-dual methods and augmented Lagrangian methods.
These two classes exploit the duality structure discussed above and can deal with hard constraints.
They have already been applied to continuous MFGs, see for example the survey article \cite{achdou2020mean}.
Primal-dual methods have been applied to stationary MFGs with hard congestion terms in \cite{briceno2018proximal} and to time-dependent MFGs in \cite{briceno2019implementation}. In a closely related setting, \cite{erbar2020computation} applies a primal-dual method to solve a discretized optimal transport problem.
Augmented Lagrangian methods have been applied to MFGs
in \cite{Benamou2015} and to MFGs with hard congestion terms in \cite{benamou2017variational}.
Other methods exploiting the potential structure have been investigated in the literature, they are out of the scope of the current article. Let us mention the Sinkhorn algorithm \cite{benamou2019entropy}. The fictitious play method has been investigated in various settings:
\cite{geist2021concave} shows the connection between the fictitious play method and the Frank-Wolfe algorithm in a discrete and potential setting;
\cite{CDHD2015} considers a continuous setting, with a non-convex potential.


Let us emphasize that the above references all deal with interaction terms depending on the distribution of the states of the agents; very few publications are concerned by interactions through the controls (see \cite{kobeissi2020meanfinite}). The present work is the first to address methods for ``Cournot" mean field games.

\paragraph{\textbf{Contributions.}}
Let us comment further on the families of methods under investigation and our contributions.
The primal-dual algorithms that we have implemented were introduced by A.~Chambolle and T.~Pock \cite{chambolle2011first} and applied to mean field games in \cite{briceno2018proximal}. 
A novelty of our work is also to show that the extension of primal-dual methods of \cite{chambolle2016ergodic}, involving nonlinear proximity operators (based on Bregman divergences), can also be used to solve MFGs.
The augmented Lagrangian method that we have implemented is applied to the dual problem (involving the dynamic programming equation), as originally proposed in \cite{benamou2000computational} for optimal transportation problems.
As in \cite{benamou2000computational}, we have actually implemented a variant of the augmented Lagrangian method, called alternating direction method of multipliers (ADMM). The method was introduced by R.~Glowinski and A.~Marroco \cite{glowinski1975approximation} and studied by  D.~Gabay and B.~Mercier \cite{gabay1976dual}. It relies on a successive minimization of the augmented Lagrangian function.
One of the main limitations of ADMM is that when the number of involved variables is greater or equal to three, as it is the case for our problem, convergence is not granted.
A novelty of our work is to consider a variant of ADMM, the alternating direction method with Gaussian back substitution (ADM-G), introduced in \cite{he2012alternating}. At each iteration of this method, the ADMM step is followed by a Gaussian back substitution step. Convergence is ensured. The practical implementation of the additional step turns out to be inexpensive in our framework.

The last contribution of this work is to propose and solve numerically two hard constraints problems: a congestion mean field game problem and a ``Cournot" mean field game. Following our analysis we define a notion of residuals allowing us to compare the empirical convergence of each method in a common setting.

\paragraph{\textbf{Organization of the article.}}
The article is organized as follows. In section \ref{section:discrete-mfg} we provide the main notations, the mean field game system under study and the underlying individual player problem.
In section \ref{section:potential-problem} we formulate a potential problem and perform the announced change of variable. In section \ref{sec:Duality} we form a dual problem and we establish a duality result.  In section \ref{sec:connexion} we provide our main results: existence and uniqueness of a solution to the mean field game.
In section \ref{sec:numerics} we provide a detailed implementation of the primal-dual proximal algorithms, ADMM and ADM-G, and we give theoretical convergence results when possible.
In section \ref{sec:numerical} we present numerical results for two concrete problems. We provide outputs obtained for each method: errors, value function, equilibrium measure, mean displacement, congestion, demand and price.

\section{Discrete mean field games \label{section:discrete-mfg}}

\subsection{Notation}

\paragraph{Sets.}
Let $T \in \mathbb{N}^\star$ denote the duration of the game. We set $\mathcal{T}= \{ 0,..., T-1 \}$ and $\bar{\mathcal{T}} = \{ 0,...,T \}$. Let $S =  \{ 0,..., n-1 \}$ denote the state space. We set
\begin{align*}
\Delta(S) = \ & \Big\{ \pi \colon S \rightarrow [0,1] \, \big|\, \sum_{x \in S} \pi(x) = 1 \Big\}, \\
\Delta = \ & \Big\{ \pi \colon \mathcal{T} \times S \times S \rightarrow [0,1] \, \big|\, \pi(t,x,\cdot) \in \Delta(S), \ \forall (t,x) \in \mathcal{T} \times S \Big\}.
\end{align*}
For any finite set $A$, we denote by $\mathbb{R}(A)$ the finite-dimensional vector space of mappings from $A$ to $\mathbb{R}$. For any finite set $B$ and linear operator $L \colon \mathbb{R}(A) \to \mathbb{R}(B)$, we denote $L^\star \colon  \mathbb{R}(B) \to \mathbb{R}(A)$ the adjoint operator satisfying the relation
\begin{equation} \nonumber
    \sum_{x \in A} L[u](x) v(x) = \sum_{y \in B} u(y) L^\star[v](y).
\end{equation}
All along the article, we make use of the following spaces:
\begin{equation*}
\begin{array}{rlrl}
\mathcal{R} = &    \R(\bar{\mathcal{T}} \times S) \times \R(\mathcal{T} \times S^2), \qquad
& \mathcal{U} = &    \R(\bar{\mathcal{T}} \times S) \times \R(\mathcal{T}), \\
\mathcal{C}= &    \mathcal{R} \times \R(\bar{\mathcal{T}} \times S) \times \R(\mathcal{T}),
& \mathcal{K} = &    \R(\bar{\mathcal{T}} \times S) \times \mathcal{U} .
\end{array}
\end{equation*}

\paragraph{Convex analysis.}
For any function $g \colon \R^d \to \mathbb{R} \cup \{+ \infty \}$, we denote
\begin{equation} \nonumber
\dom(g) = \left\{x \in X \,\big|\, g(x) < + \infty \right\}.
\end{equation}
The subdifferential of $g$ is defined by
\begin{equation} \nonumber
\partial g(x) = \left\{ x^\star \in \mathbb{R}^d \,\big|\, g(x') \geq g(x) + \langle x^\star, x' - x \rangle, \ \forall x' \in \mathbb{R}^d \right\}.
\end{equation}
By convention, $\partial g(x)= \emptyset$ if $g(x)= + \infty$.
Note also that $x^\star \in \partial g(x)$ if and only if $g(x) + g^\star(x^\star) = \langle x, x^\star \rangle$, where $g^\star$ is the Fenchel transform of $g$, defined by
\begin{equation} \nonumber
g^\star(x^\star) = \sup_{x \in \mathbb{R}^d} \langle x, x^\star \rangle - g(x).
\end{equation}
Note that the subdifferential and Fenchel transforms of $\ell$, $F$, and $\phi$ (introduced in the next paragraph) are considered for fixed values of the time and space variables.

We denote by $\chi$ the indicator function of $\{ 0 \}$ (without specifying the underlying vector space). For any subset $C \subseteq \mathbb{R}^d$, we denote by $\chi_C$ the indicator function of $C$. For any $x \in C$, we denote by $N_{C}(x)$ the normal cone to $C$ at $x$,
\begin{equation} \nonumber
N_{C}(x) = \left\{ x^\star \in \mathbb{R}^{d} \,\big|\, \langle x^\star, x'-x \rangle \leq 0, \ \forall x' \in C \right\}.
\end{equation}
We set $N_{C}(x)= \emptyset$ if $x \notin C$.

\paragraph{Nemytskii operators.}
Given two mappings $g \colon \mathcal{X} \times \mathcal{Y} \rightarrow \mathcal{Z}$ and $u \colon \mathcal{X} \rightarrow \mathcal{Y}$, we call Nemytskii operator the mapping $\bm{g}[u] \colon \mathcal{X} \rightarrow \mathcal{Z}$ defined by
\begin{equation*}
\bm{g}[u](x) = g(x,u(x)).
\end{equation*}
We will mainly use this notation in order to avoid the repetition of time and space variables, for example, we will write $\bm{\ell}[\pi](t,x)$ instead of $\ell(t,x,\pi(t,x))$.

All along the article, we will transpose some notions associated with $g$ to the Nemytskii operator $\bm{g}[u]$. When $\mathcal{Y}= \R^d$ and $\mathcal{Z}= \R \cup \{ + \infty \}$, we define the domain of $g$ by
\begin{equation} \nonumber
\bfdom(\bm{g}) = \big\{ u \colon \mathcal{X} \to \mathbb{R}^{d} \,\big|\, u(x) \in \dom(g(x,\cdot)), \ \forall x \in \mathcal{X} \big\}.
\end{equation}
We define $\bm{g}^\star[v] \colon \mathbb{R}^{d}  \to \mathbb{R}\cup \{+\infty\}$ by
$\bm{g}^\star[v](x) = g^\star(x,v(x)),
$
where $g^\star$ is the Fenchel transform of $g$ with respect to the second variable.

\subsection{Coupled system}

\paragraph{Data and assumption.}
We fix an initial distribution $m_0 \in \Delta(S)$ and four maps: a running cost $\ell$, a potential price function $\phi$, a potential congestion cost $F$, and a displacement cost $\alpha$,
\begin{equation} \nonumber
\begin{array}{rlrl}
\ell \colon &    \mathcal{T} \times S \times \mathbb{R}(S) \rightarrow \R\cup \{+\infty\}, \quad & \phi \colon &   \mathcal{T} \times \R \rightarrow \R \cup \{+\infty\}, \\
F \colon &    \bar{\mathcal{T}} \times \mathbb{R}(S) \rightarrow \R \cup \{+\infty\}, & \alpha \colon &   \mathcal{T} \times S^2 \rightarrow \R.
\end{array}
\end{equation}
The following convexity assumption is in force all along the article. Note that we will later make use of an additional qualification assumption (Assumption \ref{A:m-pi-kolmogorov}).

\begin{assumption}[Convexity] \label{A:l-convex}
For any $(t,s,x) \in \mathcal{T} \times \bar{\mathcal{T}} \times S$, the maps $\ell(t,x,\cdot)$, $F(s,\cdot)$, and $\phi(t,\cdot)$ are proper, convex and lower semicontinuous. In addition $\dom (\ell(t,x,\cdot)) \subseteq \Delta(S)$.
\end{assumption}

\paragraph{Coupled system.}
The unknowns of the MFG system introduced below are denoted $((m,\pi),(u,\gamma,P)) \in \mathcal{R} \times \mathcal{K}$. They can be described as follows:
\begin{itemize}
\item[\tiny$\bullet$]  $\gamma$ and $P$ are the coupling terms of the MFG: $\gamma(t,x)$ is a congestion term incurred by agents located at $x \in S$ at time $t \in \bar{\mathcal{T}}$ and $P(t)$ is a price variable
\item[\tiny$\bullet$]  $\pi(t,x,y)$ denotes the probability transition from $x \in S$ to $y \in S$, for agents located at $x$ at time $t$
\item[\tiny$\bullet$]  $m(t,x)$ denotes the proportion of agents located at $x \in S$ at time $t \in \bar{\mathcal{T}}$
\item[\tiny$\bullet$]  $u(t,x)$ is the value function of the agents.
\end{itemize}
For any $(\gamma,P) \in \mathcal{U}$, we define the individual cost $c \colon \mathcal{T} \times S \times S \times \Delta(S)  \to \mathbb{R}$,
\begin{equation} \nonumber
c_{\gamma,P}(t,x,y,\rho) = \ell(t,x,\rho) + \gamma(t,x) + \alpha(t,x,y) P(t).
\end{equation}
Given $(m,\pi) \in \mathcal{R}$, we denote
\begin{equation*}
\bm{Q}[m,\pi](t)  = \sum_{(x,y) \in S^2} m(t,x) \pi(t,x,y) \alpha(t,x,y).
\end{equation*}
We aim at finding a quintuplet $(m,\pi,u,\gamma,P)$ such that for any $(t,s,x) \in \mathcal{T} \times \bar{\mathcal{T}} \times S$,
\begin{equation} \label{eq:MFG} \tag{MFG}
\begin{cases}
\begin{array}{cl}
\text{(i)} &   
\begin{cases}
\begin{array}{rl}
u(t,x) = &   {\displaystyle \inf_{\rho \in \Delta(S)} \
\sum_{y \in S} \rho(y) \Big(
c_{\gamma,P}(t,x,y,\rho) + u(t+1,y) 
\Big),
}\\
u(T,x) = &   \gamma(T,x),
\end{array}
\end{cases}
\\[2.5em]
\text{(ii)} & \ \pi(t,x,\cdot) \in   {\displaystyle \underset{\rho \in \Delta(S)}{\text{arg min}}\
\sum_{y \in S} \rho(y) \Big(
c_{\gamma,P}(t,x,y,\rho) + u(t+1,y) 
\Big),
} \\[2em]
\text{(iii)} &
\begin{cases}
\begin{array}{rl}
m(t+1,x)= & {\displaystyle \sum_{y \in S} m(t,y) \pi(t,y,x), } \\
m(0,x)= & m_0(x),
\end{array}
\end{cases}
\\[2.5em]
\text{(iv)} & \  {\displaystyle \gamma(s,\cdot) \in \partial F (s,m(s,\cdot)) },
\\[1em]
\text{(v)} &\ {\displaystyle P(t) \in \partial \phi \big(t, \bm{Q}[m,\pi](t) \big). }
\end{array}
\end{cases}
\end{equation}

\paragraph{Heuristic interpretation.}
\begin{itemize}
\item[\tiny$\bullet$] The dynamical system of each agent is a Markov chain $(X_s^\pi)_{s\in \bar{\mathcal{T}}}$ controlled by $\pi \in \Delta$, with initial distribution $m_0$: for any $ (t,x,y) \in \mathcal{T} \times S^2$,
\begin{equation} \label{eq:markov_chain}
\mathbb{P}\left(X_{t+1}^\pi = y \vert X_{t}^\pi = x \right) = \pi(t,x,y), \quad \mathbb{P}( X_{0}^\pi = x) = m_0(x).
\end{equation}
Given the coupling terms $(\gamma, P) \in \mathcal{U}$, the individual control problem is
\begin{equation} \label{eq:pb_rep_agent}
\inf_{\pi \in \Delta}  J_{\gamma,P}(\pi) := 
 \mathbb{E} \Big(  \sum_{t \in \mathcal{T}} c_{\gamma,P}(t,X_{t}^\pi,X_{t+1}^\pi,\pi(t,X_t^\pi)) +  \gamma(T,X_T^\pi) \Big).
\end{equation}
The equations (\ref{eq:MFG},i-ii) are the associated dynamic programming equations:
given $(\gamma,P) \in \mathcal{U}$, if $u$ and $\pi$ satisfy these equations, then $\pi$ is a solution to \eqref{eq:pb_rep_agent}. The reader can refer to \cite[Chapter 7]{frederic} for a detailed presentation of the dynamic programming approach for the optimal control of Markov chains.
\item[\tiny$\bullet$] Given $\pi \in \Delta$, denote by $m^\pi$ the probability distribution of $X^\pi$, that is, $m^\pi(t,x)=\mathbb{P}(X_t^\pi= x)$. Then $m^\pi$ is obtained by solving the Kolmogorov equation (\ref{eq:MFG},iii). In the limit when the number of agents tends to $\infty$, the distribution $m^\pi$ coincides with the empirical distribution of the agents.
\item[\tiny$\bullet$] Finally, the equations (\ref{eq:MFG},iv-v) link the coupling terms $\gamma$ and $P$ to the distribution of the agents $m$ and their control $\pi$.
\end{itemize}
In summary: given a solution $((m,\pi),(u,\gamma,P)) \in \mathcal{R} \times \mathcal{K}$ to \eqref{eq:MFG}, the triplet $(\pi,\gamma,P)$ is a solution to the mean field game
\begin{equation} \nonumber
\pi \in \argmin_{\rho \in \Delta}  J_{\gamma,P}(\rho), \quad \gamma \in \partial \bm{F}[m^\pi], \quad P \in \partial \bm{\phi}[\bm{Q}[m^\pi,\pi]].
\end{equation}

\paragraph{Potential problem.}
The next section of the article will be dedicated to the connection between the coupled system and the potential problem \eqref{eq:pot_pb}, introduced page \pageref{eq:pot_pb}. We provide here a stochastic formulation of \eqref{eq:pot_pb} as an optimal control problem of a Markov chain, which has its own interest:
\begin{align} \nonumber
\inf_{\pi \in \Delta} \ \ &
 \sum_{t \in \mathcal{T}}  \mathbb{E} \big[ \ell(t,X_{t}^\pi,\pi(t,X_{t}^\pi)) \big] + \sum_{t \in \bar{\mathcal{T}}} F(t, \mathcal{L}(X_{t}^\pi)) \\
 & \quad +  \sum_{t \in \mathcal{T}} \phi\left(t,  \mathbb{E} \left[ \alpha(t,X_{t}^\pi,X_{t+1}^\pi) \pi(t,X_{t}^\pi,X_{t+1}^\pi) \right] \right), \nonumber
\end{align}
where $(X^{\pi}_t)_{t\in \bar{\mathcal{T}}}$ is a controlled Markov satisfying \eqref{eq:markov_chain}.

\begin{remark} \label{remark:model}
\begin{itemize}
\item[\tiny$\bullet$] At any time $t \in \mathcal{T}$, it is possible to encode constraints on the transitions of the agents located at $x \in S$ by defining $\ell$ in such a way that $\dom (\ell(t,x,\cdot))$ is strictly included into $\Delta(S)$. An example will be considered in Section \ref{sec:numerical}.
\item[\tiny$\bullet$] If $F$ and $\phi$ are differentiable, then their subdifferentials are singletons and thus the coupling terms $\gamma$ and $P$ are uniquely determined by $m$ and $\pi$ through the equations (\ref{eq:MFG},iv-v).
\item[\tiny$\bullet$] The equations (\ref{eq:MFG},iv-v) imply that $m \in \bfdom(\bm{F})$ and $\bm{Q}[m,\pi] \in \bfdom(\bm{\phi})$. Thus they encode hard constraints on $m$ and $\pi$ if the coupling functions $F$ or $\phi$ take the value $+\infty$. For example, they can be chosen in the form $G \colon \mathbb{R}^d \to \mathbb{R} \cup \{ + \infty\}$,
$G = g + \chi_K$,
where $g \colon \mathbb{R}^d \to \mathbb{R}$ is convex and differentiable and where $K$ is a closed and convex subset of $\mathbb{R}^d$. Then by \cite[Corollary 16.38]{bauschke2011convex},
\begin{equation} \nonumber
\partial G(x) = \nabla g(x) + N_K(x), \quad \forall x \in \R^d.
\end{equation}
\end{itemize}
\end{remark}

\subsection{Further notation}

We introduce now two linear operators, $\bm{A}$ and $\bm{S}$. They will allow to bring out the connection between the coupled system and the potential problem.
The operator
$\bm{A} \colon \R(\mathcal{T} \times S^2) \rightarrow \R(\mathcal{T})$
and its adjoint
$\bm{A}^\star \colon  \R(\mathcal{T}) \rightarrow \R(\mathcal{T} \times S^2)$
are given by
\begin{equation*}
\bm{A}[w](t)  = \sum_{(x,y) \in S^2} w(t,x,y) \alpha(t,x,y), \quad
 \bm{A}^\star[P](t,x,y) = \alpha(t,x,y) P(t).
\end{equation*}
The operator $\bm{S} \colon \R(\mathcal{T} \times S^2) \rightarrow \R(\bar{\mathcal{T}} \times S)$ and its adjoint $\bm{S}^\star \colon  \R(\bar{\mathcal{T}} \times S) \rightarrow \R(\mathcal{T} \times S^2)$ are given by
\begin{align*}
\bm{S}[w](s,x)  = {} &
\begin{cases}
\begin{array}{ll}
{
\sum_{y \in S} w(s-1,y,x) } \ & \text{if $s>0$}, \\
0 & \text{if $s=0$},
\end{array}
\end{cases}
\\[0.5em]
\bm{S}^\star[u](t,x,y)= {} & u(t+1,y). \nonumber
\end{align*}
We can now reformulate the dynamic programming equations of the coupled system (\ref{eq:MFG},i-ii) as follows:
\begin{equation*}
\begin{cases}
\begin{array}{cl}
\text{(i)} &
\begin{cases}
 u(t,x)+ \bm{\ell}^\star[-\bm{A}^\star P - \bm{S}^\star u](t,x) =  \gamma(t,x),\\
 u(T,x) =  \gamma(T,x),
\end{cases}
\\[1.5em]
\text{(ii)} & (\bm{\ell}[\pi] +  \bm{\ell}^\star[-\bm{A}^\star P - \bm{S}^\star u])(t,x)  =- \langle \pi(t,x),(\bm{A}^\star P + \bm{S}^\star u)(t,x) \rangle.
\end{array}
\end{cases}
\end{equation*}
 
\section{Potential problem and convex formulation \label{section:potential-problem}}

\subsection{Perspective functions}

Given $h \colon \mathbb{R}^d \to \mathbb{R}\cup\{+\infty\}$ a proper l.s.c.\@ and convex function with bounded domain,
we define the perspective function $\tilde{h} \colon \mathbb{R} \times  \mathbb{R}^d  \to \mathbb{R}\cup\{+\infty\}$ (following  \cite[Section 3]{rockafellar1966level}) by
\begin{equation} \nonumber
\tilde{h}(\theta,x) = \begin{cases}
\theta h(x/ \theta), & {\normalfont  \text{if }} \theta>0,\\
0,  & {\normalfont  \text{if }} (\theta,x)=(0,0),\\
+ \infty, & {\normalfont  \text{otherwise}}.
 \end{cases}
\end{equation}

\begin{lemma} \label{lemma:persp}
The perspective function $\tilde{h}$ is proper,  convex, l.s.c.\@ and its domain is given by
$\dom(\tilde{h}) = \big\{ (\theta,x) \in \mathbb{R}_+ \times \mathbb{R}^d \, \big| \, x \in \theta \dom(h) \big\}.$
For any $(\theta^\star, x^\star) \in \mathbb{R} \times  \mathbb{R}^d$, we have
\begin{equation}  \label{eq:h-star}
\tilde{h}^\star(\theta^\star, x^\star) = \chi_Q(\theta^\star, x^\star),
\end{equation}
where $Q := \{ (\theta^\star, x^\star) \in \mathbb{R} \times  \mathbb{R}^d, \, h^\star(x^\star) + \theta^\star \leq 0 \}$.
\end{lemma}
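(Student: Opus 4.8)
The plan is to bypass a direct (and notoriously delicate) verification of lower semicontinuity near $\theta=0$ by instead \emph{identifying} $\tilde h$ as a Fenchel conjugate, so that convexity, closedness and properness come for free, and the conjugate formula \eqref{eq:h-star} follows by Fenchel--Moreau. First I would dispose of the domain formula, which is immediate from the definition: for $\theta>0$ one has $\tilde h(\theta,x)<+\infty$ iff $x/\theta\in\dom(h)$, i.e. $x\in\theta\dom(h)$; for $\theta=0$ the only finite value is at $x=0$, matching $x\in 0\cdot\dom(h)=\{0\}$ (here I use that $\dom(h)\neq\emptyset$); and $\theta<0$ is excluded on both sides.

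Next I would record two consequences of the hypotheses on $h$. Since $h$ is proper, convex and l.s.c.\@ it admits an affine minorant, hence is bounded below on its bounded domain, so $h^\star(x^\star)=\sup_{x\in\dom(h)}(\langle x^\star,x\rangle-h(x))$ is finite for every $x^\star$; in particular $h^\star$ is itself proper, convex and l.s.c., and the set $Q=\{(\theta^\star,x^\star)\mid h^\star(x^\star)+\theta^\star\le 0\}$ is a nonempty (it contains $(-h^\star(0),0)$) closed convex set, so that $\chi_Q$ is proper, convex and l.s.c.

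The crux is then the single computation $\chi_Q^\star=\tilde h$. Using the pairing $\langle(\theta,x),(\theta^\star,x^\star)\rangle=\theta\theta^\star+\langle x,x^\star\rangle$ and first maximizing over $\theta^\star\le -h^\star(x^\star)$ at fixed $x^\star$, the supremum is $+\infty$ when $\theta<0$; equals $\sup_{x^\star}\langle x,x^\star\rangle$ when $\theta=0$ (hence $0$ if $x=0$ and $+\infty$ otherwise); and for $\theta>0$ factors as $\sup_{x^\star}(\langle x,x^\star\rangle-\theta h^\star(x^\star))=\theta\,\sup_{x^\star}(\langle x/\theta,x^\star\rangle-h^\star(x^\star))=\theta\,h^{\star\star}(x/\theta)=\theta h(x/\theta)$, the last equality by Fenchel--Moreau applied to the proper l.s.c.\@ convex $h$. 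Comparing these three cases with the definition of $\tilde h$ gives $\chi_Q^\star=\tilde h$ pointwise.

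Finally I would conclude by duality. Being the conjugate of the proper l.s.c.\@ convex function $\chi_Q$, the function $\tilde h=\chi_Q^\star$ is automatically convex, l.s.c.\@ and proper, which settles the first assertion; and applying Fenchel--Moreau once more yields $\tilde h^\star=\chi_Q^{\star\star}=\chi_Q$, i.e.\@ exactly \eqref{eq:h-star}. The only point demanding care is that the entire argument rests on $h^\star$ being finite-valued (equivalently $Q\neq\emptyset$), which is precisely where the bounded-domain hypothesis on $h$ enters; the genuine obstacle, namely establishing lower semicontinuity of $\tilde h$ at the boundary $\theta=0$, is dissolved entirely by realizing $\tilde h$ as a conjugate rather than estimating $\liminf_{(\theta,x)\to(0,0)}\theta h(x/\theta)$ by hand.
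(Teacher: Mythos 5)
Your proof is correct, and it takes a genuinely different route from the paper's. The paper disposes of the lemma in one line by citing general results on perspective functions (Lemmas 1.157--1.158 in Bonnans--Shapiro), whose content is that the l.s.c.\@ hull of $\tilde h$ at $\theta=0$ is governed by the recession function of $h$, which equals $\chi_{\{0\}}$ precisely because $\dom(h)$ is bounded. You instead make the argument self-contained by exhibiting $\tilde h$ as $\chi_Q^\star$ and invoking Fenchel--Moreau twice; this simultaneously delivers properness, convexity, lower semicontinuity and the conjugate formula \eqref{eq:h-star}, and it avoids both the recession-function machinery and any direct $\liminf$ estimate at $\theta=0$. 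The case analysis in your computation of $\chi_Q^\star$ is right: for $\theta<0$ the supremum blows up along $\theta^\star\to-\infty$; for $\theta=0$ it reduces to $\sup_{x^\star}\langle x,x^\star\rangle$ because $h^\star$ is finite-valued (this is exactly where boundedness of $\dom(h)$, together with the existence of an affine minorant of the proper l.s.c.\@ convex $h$, is used --- the same place the paper's cited lemmas use it, in the guise of the recession function); and for $\theta>0$ the inner supremum is attained at $\theta^\star=-h^\star(x^\star)$ and Fenchel--Moreau gives $\theta h^{\star\star}(x/\theta)=\theta h(x/\theta)$. What your approach buys is a short, verifiable proof with no external dependency; what the paper's buys is brevity and a pointer to the general theory that also covers unbounded domains (where the $\theta=0$ slice would instead be the recession function rather than $\chi_{\{0\}}$).
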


\begin{proof}
The proof is a direct application of \cite[Lemmas 1.157, 1.158]{bonnans2013perturbation} when $h$ has a bounded domain. In this case the recession function of $h$ is the indicator function of zero.
 \end{proof}

\begin{lemma} \label{lemma:hhstar-fenchel}
Let $(\theta, x),(\theta^\star, x^\star) \in \mathbb{R} \times  \mathbb{R}^d$.
Then $(\theta^\star,x^\star) \in \partial \tilde{h}(\theta,x)$
if and only if
\begin{equation*}
\begin{array}{ll}
\text{either: } & h^\star(x^\star) + \theta^\star \leq 0 \quad \ \text{and} \quad (\theta,x) = (0,0), \\
\text{or:} & h^\star(x^\star) + \theta^\star = 0, \quad h(x/\theta) + h^\star(x^\star) - \langle x/\theta, x^\star \rangle = 0, \quad \text{and} \quad  \theta > 0.
\end{array}
\end{equation*}
%
\end{lemma}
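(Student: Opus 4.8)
The plan is to characterize the subdifferential of the perspective function $\tilde{h}$ by unwinding the Fenchel equality already established in Lemma \ref{lemma:persp}. The key observation is that, since $\tilde{h}$ is proper, convex and l.s.c.\@ (Lemma \ref{lemma:persp}), the relation $(\theta^\star,x^\star) \in \partial \tilde{h}(\theta,x)$ is equivalent to the Fenchel--Young equality
\begin{equation} \nonumber
\tilde{h}(\theta,x) + \tilde{h}^\star(\theta^\star,x^\star) = \theta \theta^\star + \langle x, x^\star \rangle,
\end{equation}
which is the statement recalled in the convex analysis paragraph of the excerpt. Substituting the explicit formula $\tilde{h}^\star = \chi_Q$ from \eqref{eq:h-star} immediately forces $(\theta^\star,x^\star) \in Q$, i.e.\@ $h^\star(x^\star) + \theta^\star \leq 0$; otherwise the right-hand side is $+\infty$ and the equality cannot hold with $\tilde{h}(\theta,x)$ finite. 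So the constraint $h^\star(x^\star)+\theta^\star \le 0$ appears in both branches for free, and the Fenchel--Young equality reduces to $\tilde{h}(\theta,x) = \theta\theta^\star + \langle x, x^\star \rangle$ with $(\theta,x) \in \dom(\tilde h)$.

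Next I would split according to the definition of $\tilde{h}$ on its domain. Recall from Lemma \ref{lemma:persp} that $\dom(\tilde h) \subseteq \mathbb{R}_+ \times \mathbb{R}^d$, so only the cases $\theta=0$ and $\theta>0$ need treatment. In the case $(\theta,x)=(0,0)$, the left-hand side $\tilde{h}(0,0)=0$ equals the right-hand side $0$ automatically, so the only surviving requirement is $(\theta^\star,x^\star)\in Q$; this yields the first branch. In the case $\theta>0$, we have $\tilde{h}(\theta,x)=\theta h(x/\theta)$, and the Fenchel--Young equality becomes $\theta h(x/\theta) = \theta\theta^\star + \langle x, x^\star\rangle$. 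Dividing by $\theta>0$ gives $h(x/\theta) = \theta^\star + \langle x/\theta, x^\star\rangle$, which I would rearrange as $h(x/\theta) - \langle x/\theta, x^\star\rangle = \theta^\star$. Using the defining inequality of $Q$, namely $\theta^\star \le -h^\star(x^\star)$, together with the Fenchel--Young inequality $h(x/\theta) + h^\star(x^\star) \ge \langle x/\theta, x^\star\rangle$ (equivalently $h(x/\theta) - \langle x/\theta,x^\star\rangle \ge -h^\star(x^\star)$), I would conclude that $\theta^\star = h(x/\theta)-\langle x/\theta,x^\star\rangle \ge -h^\star(x^\star) \ge \theta^\star$, forcing equality throughout. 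This simultaneously gives $h^\star(x^\star)+\theta^\star = 0$ and the pointwise Fenchel equality $h(x/\theta)+h^\star(x^\star) - \langle x/\theta, x^\star\rangle = 0$, which is exactly the second branch.

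For the converse direction, I would simply read the two branches backwards: in each case the stated conditions are designed to reconstruct the Fenchel--Young equality for $\tilde h$, and since $\tilde h$ is proper convex l.s.c.\@ that equality is sufficient for subdifferentiability. In the first branch, $(\theta^\star,x^\star)\in Q$ gives $\tilde h^\star(\theta^\star,x^\star)=0$ by \eqref{eq:h-star}, and $\tilde h(0,0)=0$, so both sides of Fenchel--Young vanish. In the second branch, the two equalities recombine, upon multiplying the pointwise Fenchel identity by $\theta$ and using $\theta^\star = -h^\star(x^\star)$, into $\theta h(x/\theta) = \theta\theta^\star + \langle x, x^\star\rangle$, which is the required equality.

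I do not anticipate a genuine obstacle here: the lemma is essentially a bookkeeping exercise in the equivalence between membership in the subdifferential and the Fenchel--Young equality, combined with the clean formula for $\tilde h^\star$ from Lemma \ref{lemma:persp}. The one point requiring a little care is the treatment of the boundary behavior at $\theta=0$ with $x\neq 0$: such points lie outside $\dom(\tilde h)$, so $\tilde h(\theta,x)=+\infty$ and the subdifferential is empty by convention, consistent with neither branch being satisfiable. I would make sure the case analysis is exhaustive over $\dom(\tilde h)$ and that the sign of $\theta$ is handled correctly when dividing, but otherwise the argument is a direct double-inequality squeeze driven entirely by the two Fenchel inequalities.
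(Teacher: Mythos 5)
Your proof is correct, but it takes a different route from the paper: the paper's entire proof of this lemma is a one-line citation of an external result on perspective functions (Proposition 2.3 of Combettes--M\"uller), whereas you derive the characterization from scratch. Your argument — reducing $(\theta^\star,x^\star)\in\partial\tilde h(\theta,x)$ to the Fenchel--Young equality, substituting $\tilde h^\star=\chi_Q$ from \eqref{eq:h-star}, splitting on $\theta=0$ versus $\theta>0$ over $\dom(\tilde h)$, and closing the $\theta>0$ case with the double-inequality squeeze $\theta^\star = h(x/\theta)-\langle x/\theta,x^\star\rangle \ge -h^\star(x^\star)\ge\theta^\star$ — is sound and complete, including the edge cases ($\theta=0$, $x\neq 0$ lies outside $\dom(\tilde h)$ so the subdifferential is empty; finiteness of $h^\star(x^\star)$ and of $h(x/\theta)$ follows from the equalities themselves). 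What your approach buys is a self-contained, elementary proof that depends only on Lemma \ref{lemma:persp} and the definition of the subdifferential via Fenchel--Young, at the cost of a paragraph of bookkeeping; the paper's citation is shorter but leaves the reader to consult the reference.
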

\begin{proof}
Direct application of \cite[Proposition 2.3]{combettes2018perspective}.
 \end{proof}

\subsection{Potential problem}

We define the following criterion
\begin{equation*}
\mathcal{J}(m,\pi) =  \sum_{(t,x) \in \mathcal{T} \times S} m(t,x)\bm{\ell}[\pi](t,x)  + \sum_{t \in \mathcal{T}} \bm{\phi}[\bm{Q}[m,\pi]](t) + \sum_{s \in \bar{\mathcal{T}}}\bm{F}[m](s)
\end{equation*}
and the following potential problem (recall that $m^\pi$ is the solution to the Kolmogorov equation (\ref{eq:MFG},iii), given $\pi \in \Delta$):
\begin{equation} \label{eq:pot_pb} \tag{P}
  \displaystyle \inf_{(m,\pi) \in\mathcal{R}} \mathcal{J}(m,\pi), \quad
\text{ subject to: } \, m= m^\pi.
\end{equation}
The link between the mean field game system \eqref{eq:MFG} and the potential problem \eqref{eq:pot_pb} will be exhibited in Section \ref{sec:connexion}.
Notice that Problem \eqref{eq:pot_pb} is not convex. Yet we can define a closely related convex problem, whose link with  \eqref{eq:pot_pb} is established in Lemma \ref{lemma:L-L}.

We denote by $\tilde{\ell} \colon \mathcal{T} \times S \times \mathbb{R} \times \mathbb{R}(S) \to \mathbb{R}\cup \{+\infty\}$ the perspective function of $\ell$ with respect to the third variable. By Lemma \ref{lemma:persp} the function $\tilde{\ell}(t,x,\cdot,\cdot)$
is proper convex and l.s.c.\@ for any $(t,x) \in \mathcal{T} \times S$.
We define
\begin{equation*}
\tilde{\mathcal{J}}(m,w) =
\sum_{(t,x) \in \mathcal{T} \times S} \tilde{\bm{\ell}}[m,w](t,x) + \sum_{t \in \mathcal{T}} \bm{\phi}[\bm{A}w](t) + \sum_{s \in \bar{\mathcal{T}}}\bm{F}[m](s).
\end{equation*}
In the above definition, $\tilde{\bm{\ell}}$ is the Nemytskii operator of $\tilde{\ell}$, that is, for any $(t,x) \in \mathcal{T} \times S$,
\begin{equation*}
\tilde{\bm{\ell}}[m,w](t,x)=
\begin{cases}
\begin{array}{ll}
m(t,x) \ell \Big( t,x,
\frac{w(t,x,\cdot)}{m(t,x)}\Big), & \text{if $m(t,x) >0$}, \\
0, & \text{if $m(t,x)=0$ and $w(t,x,\cdot)=0,$} \\
+ \infty, & \text{otherwise.}
\end{array}
\end{cases}
\end{equation*}
We consider now the following convex problem:
\begin{equation} \label{eq:pot_pb_conv} \tag{{\~P}}
 \displaystyle  \inf_{(m,w) \in \mathcal{R}} \tilde{\mathcal{J}}(m,w),\quad \text{ subject to: }  \bm{S}w - m +  \bar{m}_0= 0,
\end{equation}
where $\bar{m}_0 \in \R(\bar{\mathcal{T}} \times S)$ is defined by
\begin{equation} \nonumber
\bar{m}_0(s,x) = \begin{cases}
\begin{array}{ll}
m_0(x), & \text{if $s= 0$}, \\
0, & \text{otherwise}.
\end{array}
\end{cases}
\end{equation}

\begin{lemma} \label{lemma:L-L} 
Let $\val({\normalfont \text{P}})$ and $\val({\normalfont \tilde{\text{P}}})$ respectively denote the values of problems \eqref{eq:pot_pb} and \eqref{eq:pot_pb_conv}. Then $\val({\normalfont \text{P}}) = \val({\normalfont \tilde{\text{P}}})$.
In addition, if Problem \eqref{eq:pot_pb} is feasible, then both problems \eqref{eq:pot_pb} and \eqref{eq:pot_pb_conv} have a non-empty bounded set of solutions.
\end{lemma}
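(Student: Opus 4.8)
The plan is to exploit the change of variable $w(t,x,\cdot) = m(t,x)\pi(t,x,\cdot)$ (with the inverse $\pi(t,x,\cdot) = w(t,x,\cdot)/m(t,x)$), the discrete analogue of the transformation used in \cite{benamou2000computational}, to set up a feasibility- and cost-preserving correspondence between \eqref{eq:pot_pb} and \eqref{eq:pot_pb_conv}. Value equality and the matching of minimizers both follow from this correspondence, and the existence and boundedness of solutions is then obtained by a compactness argument carried out on the convex problem \eqref{eq:pot_pb_conv} and pulled back to \eqref{eq:pot_pb}. For the inequality $\val(\tilde{\text{P}}) \le \val(\text{P})$, I would take $(m,\pi) \in \mathcal{R}$ feasible for \eqref{eq:pot_pb}, that is $m = m^\pi$, and set $w(t,x,\cdot) = m(t,x)\pi(t,x,\cdot)$. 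Using the Kolmogorov equation (\ref{eq:MFG},iii) I check the affine constraint $\bm{S}w - m + \bar{m}_0 = 0$: for $s>0$ one has $\bm{S}w(s,x) = \sum_{y} m(s-1,y)\pi(s-1,y,x) = m(s,x)$, and the cell $s=0$ is closed by the initial condition $m(0,\cdot)=m_0$. Then, term by term, $\tilde{\bm{\ell}}[m,w] = m\,\bm{\ell}[\pi]$ directly from the definition of $\tilde{\bm{\ell}}$ (the two degenerate branches $m(t,x)=0$ agreeing because $w(t,x,\cdot)=0$ there), $\bm{A}w = \bm{Q}[m,\pi]$ by definition of $\bm{A}$ and $\bm{Q}$, and the $\bm{F}$ terms coincide; hence $\tilde{\mathcal{J}}(m,w) = \mathcal{J}(m,\pi)$ in $\mathbb{R}\cup\{+\infty\}$.

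For the reverse inequality I start from $(m,w)$ feasible for \eqref{eq:pot_pb_conv} with $\tilde{\mathcal{J}}(m,w) < +\infty$ and reconstruct $\pi(t,x,\cdot) = w(t,x,\cdot)/m(t,x)$ whenever $m(t,x)>0$, taking $\pi(t,x,\cdot)$ equal to an arbitrary element of $\dom(\ell(t,x,\cdot))$ (nonempty by Assumption \ref{A:l-convex}) otherwise. Finiteness of the perspective term forces $(m(t,x),w(t,x,\cdot)) \in \dom(\tilde{\ell}(t,x,\cdot,\cdot))$, so by Lemma \ref{lemma:persp} one gets $m(t,x)\ge 0$ and $w(t,x,\cdot) \in m(t,x)\dom(\ell(t,x,\cdot)) \subseteq m(t,x)\Delta(S)$; this makes $\pi \in \Delta$ and yields $\sum_y w(t,x,y) = m(t,x)$. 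From the affine constraint I recover $m(0,\cdot)=m_0$ and $m(s,\cdot)=\bm{S}w(s,\cdot)$ for $s>0$, and the reconstruction gives $\sum_y m(s-1,y)\pi(s-1,y,x) = \sum_y w(s-1,y,x) = m(s,x)$, i.e. $m = m^\pi$, so $(m,\pi)$ is feasible for \eqref{eq:pot_pb}. The same term-by-term identity gives $\mathcal{J}(m,\pi) = \tilde{\mathcal{J}}(m,w)$, which proves $\val(\text{P}) \le \val(\tilde{\text{P}})$ and hence equality.

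For the second assertion I would observe that the above correspondence sends finite-cost feasible points of one problem to finite-cost feasible points of the other, so feasibility of \eqref{eq:pot_pb} is equivalent to that of \eqref{eq:pot_pb_conv}, and it suffices to produce a minimizer of \eqref{eq:pot_pb_conv} and transport it back. On the finite-cost feasible set of \eqref{eq:pot_pb_conv} I have just shown $\sum_y w(t,x,y)=m(t,x)\ge 0$; summing in $x$ and using the constraint propagates $\sum_x m(t,x)=1$ from $t=0$, so $m(t,\cdot)\in\Delta(S)$ and $0 \le w(t,x,y) \le m(t,x) \le 1$, confining this set to a fixed compact box $K$. Since $\tilde{\mathcal{J}}$ is convex and l.s.c. (the perspective terms by Lemma \ref{lemma:persp}, $\bm{\phi}\circ\bm{A}$ as a convex l.s.c. function composed with a linear map, and $\bm{F}$ by Assumption \ref{A:l-convex}) and bounded below on $K$ (each proper convex l.s.c. summand admits an affine minorant, finite on the compact $K$), while the affine constraint set is closed, the minimum is attained on $K$ and the solution set is a closed subset of $K$, hence nonempty and bounded. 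Pulling back any minimizer through the reconstruction gives a minimizer of \eqref{eq:pot_pb}; boundedness of its solution set is then immediate, since every feasible pair obeys $\pi \in \Delta$ and $m = m^\pi \in \Delta(S)$.

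The only places where the argument is not purely mechanical are the degenerate cells $m(t,x)=0$: in both directions one must verify that the perspective convention ($\tilde{\ell}(0,\cdot)$ finite only at $0$) is precisely what makes the two objectives agree, and in the reverse direction the arbitrary but admissible choice of $\pi$ on these cells must be simultaneously compatible with $\pi \in \Delta$ and with the cost identity. Everything else reduces to Kolmogorov/constraint bookkeeping and a standard Weierstrass compactness argument made available by the inclusion $\dom(\ell(t,x,\cdot)) \subseteq \Delta(S)$, which is what keeps $m$ and $w$ in a compact set; I expect this confinement step to be the main load-bearing point of the existence part.
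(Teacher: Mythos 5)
Your proof is correct and rests on the same core mechanism as the paper's: the change of variables $w=m\pi$ and its inverse \eqref{eq:change-var-mw-pi}, with the degenerate cells $m(t,x)=0$ handled exactly as the paper does via the perspective convention, giving a cost- and feasibility-preserving correspondence and hence $\val(\text{P})=\val(\tilde{\text{P}})$. The only substantive difference is in the existence part, where you run the Weierstrass argument in the opposite direction: the paper minimizes $\pi\mapsto\mathcal{J}(m^\pi,\pi)$ directly over its bounded domain in $\Delta$ to get a solution of \eqref{eq:pot_pb} and then transports it to \eqref{eq:pot_pb_conv}, whereas you first confine the finite-cost feasible set of \eqref{eq:pot_pb_conv} to the box $0\le w(t,x,y)\le m(t,x)\le 1$, minimize the jointly l.s.c.\@ function $\tilde{\mathcal{J}}$ there, and pull the minimizer back to \eqref{eq:pot_pb}. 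Your direction is arguably slightly cleaner, since lower semicontinuity of $\tilde{\mathcal{J}}$ in $(m,w)$ is immediate from Lemma \ref{lemma:persp}, while lower semicontinuity of $\pi\mapsto\mathcal{J}(m^\pi,\pi)$ (products $m^\pi(t,x)\,\ell(t,x,\pi(t,x,\cdot))$ with vanishing mass and possibly exploding $\ell$) deserves a word of justification that the paper elides; the price you pay is the explicit bookkeeping showing $\sum_x m(t,x)=1$ propagates along the constraint, which the paper also uses (in its Step 3) to bound the solution set of \eqref{eq:pot_pb_conv}. Both arguments are sound and interchangeable.
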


\begin{proof}
 \textit{Step 1: $\val({\normalfont \text{P}}) \geq \val({\normalfont \tilde{\text{P}}})$.} Let $(m,\pi) \in \dom(\mathcal{J})$ be such that $m=m^\pi$. Let
\begin{equation} \label{eq:change-var-mpi-w}
w(t,x,\cdot) := m(t,x)\pi(t,x,\cdot),
\end{equation}
for any $(t,x) \in \mathcal{T} \times S$. Then $(m,w)$ is feasible for problem \eqref{eq:pot_pb_conv} and
\begin{equation} \label{eq:ellmpi-ellmw}
m(t,x)\ell(t,x,\pi(t,x,\cdot)) = \tilde{\ell}(t,x,m(t,x),w(t,x,\cdot)),
\end{equation}
for any $(t,x)\in \mathcal{T}\times S$. Indeed by definition of $\tilde{\ell}(t,x,\cdot,\cdot)$, if $m(t,x) > 0$ then \eqref{eq:ellmpi-ellmw} holds and if $m(t,x) = 0$ then $w(t,x,\cdot) = 0$ and \eqref{eq:ellmpi-ellmw} still holds.
It follows that $\mathcal{J}(m,\pi) = \tilde{\mathcal{J}}(m,w)$ and consequently, $\val({\normalfont \text{P}}) \geq \val({\normalfont \tilde{\text{P}}})$.

\noindent \textit{Step 2: $\val({\normalfont \text{P}}) \leq \val({\normalfont \tilde{\text{P}}})$.} Let $(m,w) \in \dom(\tilde{\mathcal{J}})$ be such that $\bm{S}w-m = \bar{m}_0$ and let $\pi$ be such that
\begin{equation} \label{eq:change-var-mw-pi}
\begin{cases} \pi(t,x,\cdot) = w(t,x,\cdot)/m(t,x), & \text{if $m(t,x) >0$,}\\
\pi(t,x,\cdot) \in \dom(\ell(t,x,\cdot)), & \text{otherwise,}
\end{cases}
\end{equation} 
for all $(t,x)\in \mathcal{T}\times S$.
Then \eqref{eq:ellmpi-ellmw} is satisfied and $(m,\pi)$ is feasible for  \eqref{eq:pot_pb}. Thus $\mathcal{J}(m,\pi) = \tilde{\mathcal{J}}(m,w)$, and consequently, $\val({\normalfont \text{P}}) \leq \val({\normalfont \tilde{\text{P}}})$.

\noindent \textit{Step 3: non-empty and bounded sets of solutions.} Since $\mathcal{J}(m^\pi,\pi)$ is l.s.c.\@ with non-empty bounded domain, it reaches its minimum on its domain. Then the set of solutions to \eqref{eq:pot_pb} is non-empty and bounded. Now let $(m,\pi)$ be a solution to \eqref{eq:pot_pb} and let $w$ be given by \eqref{eq:change-var-mpi-w}. We have that
\begin{equation*}
 \tilde{\mathcal{J}}(m,w)  = \mathcal{J}(m,\pi) = \val({\normalfont \text{P}}) = \val({\normalfont \tilde{\text{P}}}),
\end{equation*}
thus we deduce that the set of solutions to \eqref{eq:pot_pb_conv} is non-empty. It remains to show that the set of solutions to \eqref{eq:pot_pb_conv} is bounded. Let $(m,w)$ be  a solution to \eqref{eq:pot_pb_conv}. The Kolmogorov equation implies that $0 \leq m(t,x) \leq 1$, for any $(t,x) \in \bar{\mathcal{T}} \times S$. By Lemma \ref{lemma:persp}, we have $w(t,x,\cdot) \in m(t,x) \Delta(S)$, which implies that $0 \leq w(t,x,y) \leq 1$.
 \end{proof}

Note that the above proof shows how to deduce a solution to \eqref{eq:pot_pb_conv} out of a solution to \eqref{eq:pot_pb} and vice-versa, thanks to relations \eqref{eq:change-var-mpi-w} and \eqref{eq:change-var-mw-pi}.


\section{Duality \label{sec:Duality}}

We show in this section that Problem \eqref{eq:pot_pb_conv} is the dual of an optimization problem, denoted \eqref{Pb:dual}, itself equivalent to an optimal control problem of the dynamic programming equation, problem \eqref{definition:new_dual}.
For this purpose, we introduce a new assumption (Assumption \ref{A:m-pi-kolmogorov}), which is assumed to be satisfied all along the rest of the article.

\subsection{Duality result}

The dual problem is given by
\begin{equation}  \label{Pb:dual} \tag{D} 
\begin{array}{c}
 \displaystyle \sup_{\begin{subarray}{c}
 (u,\gamma,P) \in \mathcal{K}
\end{subarray}}  \mathcal{D}(u,\gamma,P)
 :=  \langle m_0, u(0,\cdot)\rangle -\sum_{t\in \mathcal{T}} \bm{\phi}^\star[P](t) -  \sum_{s\in \bar{\mathcal{T}}} \bm{F}^\star[\gamma](s),
\\[1.5em]
 \text{subject to: }  
\begin{cases}
\begin{array}{ll}
 u(t,x)+ \bm{\ell}^\star[-\bm{A}^\star P - \bm{S}^\star u](t,x)  \leq  \gamma(t,x), & (t,x) \in \mathcal{T} \times S,\\
 u(T,x) =  \gamma(T,x), & x\in S.
 \end{array}
\end{cases}
\end{array}
\end{equation}
Note that the above kind of dynamic programming equation involves inequalities
(and not equalities as in (\ref{eq:MFG},i)).

We introduce now a qualification condition, which will allow to prove the main duality result of the section.
For any $\varepsilon = (\varepsilon_1,\varepsilon_2,\varepsilon_3) \in \mathcal{K}$ and $\pi \in \bfdom(\bm{\ell})$
we define $m_1[\varepsilon,\pi]$ the solution to the following perturbed Kolmogorov equation
\begin{equation}  \label{eq:kolmogorov_eps}
m_1(t+1,x) = \sum_{y\in S}m_1(t,y)\pi(t,y,x) - \varepsilon_1(t+1,x), \qquad m_1(0) - \varepsilon_1(0) = \bar{m}_0.
\end{equation}
We also define, for any $(t,x,y) \in \mathcal{T}\times S \times S$,
\begin{equation} \label{eq:wmD_eps}
\begin{array}{rl}
w[\varepsilon,\pi](t,x,y)  = & m_1[\varepsilon,\pi](t,x)\pi(t,x,y)\\
m_2[\varepsilon,\pi](t,x)  = & m_1[\varepsilon,\pi](t,x) + \varepsilon_2(t,x)\\
D[\varepsilon,\pi](t) = & \sum_{(x,y)\in S^2}w[\varepsilon,\pi](t,x,y)\alpha(t,x,y) + \varepsilon_3(t).
\end{array}
\end{equation}

\begin{assumption}[Qualification]  \label{A:m-pi-kolmogorov}
There exists $\alpha>0$ such that for any $\varepsilon = (\varepsilon_1,\varepsilon_2,\varepsilon_3)$ in $\mathcal{K}$ with $\|\varepsilon\| \leq \alpha$, there exists $\pi \in \bfdom(\bm{\ell})$ such that
\begin{equation} \label{hyp:qualif}
m_1[\varepsilon,\pi] \geq 0, \quad m_2[\varepsilon,\pi]  \in \bfdom(\bm{F}),  \quad D[\varepsilon,\pi]  \in \bfdom(\bm{\phi}).
\end{equation}
\end{assumption}

Note that the qualification assumption implies the feasibility of Problems \eqref{eq:pot_pb_conv} and \eqref{eq:pot_pb}.

\begin{remark}
Assume that $\interior(\bfdom(\bm{F}))$ and $\interior(\bfdom(\bm{\phi}))$ are non-empty sets. Then in this case, Assumption \ref{A:m-pi-kolmogorov} is satisfied if there exists $\pi \in \bfdom(\bm{\ell})$ such that
\begin{equation} \nonumber
 m_1[0,\pi] = m_2[0,\pi]   \in \interior \left(\bfdom(\bm{F}) \cap \mathbb{R}_+(\bar{\mathcal{T}} \times S)\right),  \quad D[0,\pi]  \in \interior(\bfdom(\bm{\phi})).
\end{equation}
\end{remark}

\begin{remark}[Mean field game planning problem and optimal transport]
Let $\bar{m}_T \in \Delta(S)$ be such that $\bar{m}_T(x) > 0$ for any $x \in S$, let $F(T) = \chi_{\left\{\bar{m}_T\right\}}$ and let $\phi = 0$. In this case \eqref{eq:MFG} is a discrete mean field game planning problem (see \cite{achdou2012mean}). 

Now further assume that $F(t) = 0$ for all $t\in \mathcal{T}$, then \eqref{eq:MFG} can be interpreted as an optimal transport problem (see \cite{achdou2012mean,benamou2000computational}).
In this setting, Assumption \ref{A:m-pi-kolmogorov} is satisfied if and only if there exists $\alpha>0$ such that the following holds: 
for any $\varepsilon \in \mathbb{R}(\bar{\mathcal{T}} \times S)$ with $\|\varepsilon\| \leq \alpha$, there exists $\pi \in \bfdom(\bm{\ell})$ and $m_1$ such that
\begin{equation} \nonumber \begin{array}{rlr}
     m_1(t+1,x)  & =  \sum_{y\in S}m_1(t,y)\pi(t,x,y) - \varepsilon(t+1,x), &  x \in S,\\
     m_1(0,x) &= \bar{m}_0(x) + \varepsilon(0), & x \in S, \\ 
     m_1(T,x) & = \bar{m}_T(x) + \varepsilon(T), & x \in S,\\
     m_1(t,x) & \geq 0, & (t,x) \in \bar{\mathcal{T}} \times S.
\end{array}
\end{equation}
\end{remark}

\begin{theorem} \label{thm:qualif}
Let Assumption \ref{A:m-pi-kolmogorov} hold true.
Then the dual problem \eqref{Pb:dual} has a bounded set of solutions and $\val({\normalfont \text{D}}) = \val({\normalfont \tilde{\text{P}}})$.
\end{theorem}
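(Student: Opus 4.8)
The plan is to recognize \eqref{Pb:dual} as the Fenchel conjugate dual of the convex problem \eqref{eq:pot_pb_conv} and to obtain both the absence of duality gap and the attainment (with bounded solution set) from a single qualification fact: that Assumption \ref{A:m-pi-kolmogorov} places the origin in the interior of the domain of the value function of a suitable perturbation of \eqref{eq:pot_pb_conv}.

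First I would introduce the perturbation function $V \colon \mathcal{K} \to \R \cup \{\pm \infty\}$,
\begin{equation*}
V(\varepsilon_1,\varepsilon_2,\varepsilon_3) = \inf_{(m,w) \in \mathcal{R}} \Big\{ \sum_{(t,x) \in \mathcal{T} \times S} \tilde{\bm{\ell}}[m,w](t,x) + \sum_{t \in \mathcal{T}} \bm{\phi}[\bm{A}w + \varepsilon_3](t) + \sum_{s \in \bar{\mathcal{T}}} \bm{F}[m + \varepsilon_2](s) \Big\},
\end{equation*}
subject to $\bm{S}w - m + \bar{m}_0 + \varepsilon_1 = 0$, so that $V(0) = \val(\tilde{\text{P}})$. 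The three perturbations match the constructions \eqref{eq:kolmogorov_eps}--\eqref{eq:wmD_eps}: $\varepsilon_1$ perturbs the Kolmogorov constraint, $\varepsilon_2$ the argument of $\bm{F}$, and $\varepsilon_3$ the argument of $\bm{\phi}$. The integrand is jointly convex in $(m,w,\varepsilon)$ (the perspective $\tilde{\bm{\ell}}$ is convex and independent of $\varepsilon$, while $\bm{\phi}$ and $\bm{F}$ are composed with affine maps of $(w,\varepsilon_3)$ and $(m,\varepsilon_2)$) and the constraint is affine, so $V$ is convex as a partial infimum.

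The central computation is that of $V^\star$. Eliminating the constraint via $\varepsilon_1 = m - \bm{S}w - \bar{m}_0$, pairing the dual variable $(u,\gamma,P) \in \mathcal{K}$ with $(\varepsilon_1,\varepsilon_2,\varepsilon_3)$, and performing the changes of variable $\tilde m = m + \varepsilon_2$, $q = \bm{A}w + \varepsilon_3$, the supremum decouples. The suprema over $\tilde m$ and $q$ produce $\sum_s \bm{F}^\star[\gamma](s)$ and $\sum_t \bm{\phi}^\star[P](t)$, the constant term yields $-\langle m_0, u(0,\cdot)\rangle$, and the remaining supremum over $(m,w)$ splits over $(t,x) \in \mathcal{T} \times S$ into conjugates of $\tilde\ell(t,x,\cdot,\cdot)$ evaluated at $\big(u(t,x) - \gamma(t,x),\, -(\bm{A}^\star P + \bm{S}^\star u)(t,x,\cdot)\big)$, plus a term linear in $m(T,\cdot)$. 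By Lemma \ref{lemma:persp} each such conjugate is the indicator of $\{\,\ell^\star(x^\star) + \theta^\star \leq 0\,\}$, which takes value $0$ exactly when $u(t,x) + \bm{\ell}^\star[-\bm{A}^\star P - \bm{S}^\star u](t,x) \leq \gamma(t,x)$ and $+\infty$ otherwise, while the linear $m(T,\cdot)$ term forces $u(T,\cdot) = \gamma(T,\cdot)$. Thus $-V^\star$ equals $\mathcal{D}$ on the feasible set of \eqref{Pb:dual} and $-\infty$ elsewhere, whence $V^{\star\star}(0) = \sup(-V^\star) = \val(\text{D})$. I expect this conjugate computation --- in particular the bookkeeping of which dual variable pairs with which perturbation, the application of Lemma \ref{lemma:persp} to produce the dynamic-programming inequality, and the isolated terminal argument giving $u(T,\cdot) = \gamma(T,\cdot)$ --- to be the main obstacle; the remaining steps are standard.

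It then remains to show $0 \in \interior(\dom V)$. For $\|\varepsilon\| \leq \alpha$, Assumption \ref{A:m-pi-kolmogorov} provides $\pi \in \bfdom(\bm{\ell})$ with $m_1[\varepsilon,\pi] \geq 0$, $m_2[\varepsilon,\pi] \in \bfdom(\bm{F})$, $D[\varepsilon,\pi] \in \bfdom(\bm{\phi})$; taking $(m,w) = (m_1[\varepsilon,\pi], w[\varepsilon,\pi])$ makes all three terms of $V(\varepsilon)$ finite, the perspective term reducing to $m_1\,\ell(\cdot,\pi)$ exactly as in \eqref{eq:ellmpi-ellmw}, so $V(\varepsilon) < +\infty$ on $B(0,\alpha)$. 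Since $\tilde{\mathcal{J}}$ is bounded below on the (bounded) perturbed feasible sets, $V$ is proper, convex, and finite near the origin, hence continuous at $0$ with $V(0) = V^{\star\star}(0)$ and $\partial V(0)$ non-empty and compact \cite{bonnans2013perturbation}. The equality $V(0) = V^{\star\star}(0)$ gives $\val(\tilde{\text{P}}) = \val(\text{D})$, and the identification of the optimal set of \eqref{Pb:dual} with $\partial V(0)$ gives that this set is non-empty and bounded, completing the proof.
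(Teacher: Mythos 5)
Your proof is correct, and in substance it is the same duality argument as the paper's: the same perturbation scheme (dictated by Assumption \ref{A:m-pi-kolmogorov} through $m_1[\varepsilon,\pi]$, $m_2[\varepsilon,\pi]$, $D[\varepsilon,\pi]$), the same use of Lemma \ref{lemma:persp} to turn the conjugate of the perspective function into the dynamic-programming inequality, and the same isolated terminal term forcing $u(T,\cdot)=\gamma(T,\cdot)$. The route differs in presentation in two ways. First, the paper introduces slack variables $m_2$ and $D$ and rewrites \eqref{eq:pot_pb_conv} in the composite form $\mathcal{F}+\mathcal{G}\circ\mathcal{A}$ of problem \eqref{Pb:P''}, then invokes the Fenchel--Rockafellar theorem under the qualification $0\in\interior(\dom(\mathcal{G})-\mathcal{A}\dom(\mathcal{F}))$; you instead perturb the arguments of $\bm{F}$ and $\bm{\phi}$ directly inside the value function $V$ and unroll the machinery by computing $V^\star$ and using $V(0)=V^{\star\star}(0)$ together with the identification of the dual solution set with $\partial V(0)$. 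These are equivalent (your qualification $0\in\interior(\dom V)$ is exactly the paper's interiority condition), and your compactness of $\partial V(0)$ delivers the boundedness claim just as cleanly. Second, the paper's reformulation \eqref{Pb:P''} is not only a device for the proof: it is reused verbatim for the optimality conditions (C1--C7) in Section \ref{sec:connexion} and for all the numerical methods, which is why the authors carry the extra variables; your leaner formulation proves the theorem but would require redoing that splitting later. No gaps.
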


\begin{proof}
The primal problem \eqref{eq:pot_pb_conv} can formulated as follows:
\begin{equation} \label{Pb:P''} \tag{$\mathfrak{P}$}
\inf_{(m_1,w,m_2,D) \in \mathcal{C}}
\mathcal{F}(m_1,w,m_2,D) + \mathcal{G}(\mathcal{A}(m_1,w,m_2,D)),
\end{equation}
where the maps $\mathcal{F}\colon \mathcal{C} \to \mathbb{R} \cup \{ + \infty \}$ and $\mathcal{G}\colon \mathcal{K} \to \mathbb{R}\cup \{ + \infty \}$ and the operator $\mathcal{A} \colon \mathcal{C} \to \mathcal{K}$ are defined by
\begin{equation} \label{eq:F}
\begin{array}{rl}
\mathcal{F}(m_1,w,m_2,D)  = & {\displaystyle \sum_{(t,x) \in \mathcal{T} \times S}  \tilde{\bm{\ell}}[m_1,w](t,x) +  \sum_{t \in \mathcal{T}} \bm{\phi}[D](t) + \sum_{s \in \bar{\mathcal{T}}}\bm{F}[m_2](s),}\\[1.5em]
\mathcal{G}(y_1,y_2,y_3)  = &  \chi(y_1 + \bar{m}_0) + \chi(y_2) + \chi(y_3), \\[1em]
\mathcal{A}(m_1,w,m_2,D) = & (\bm{S}w-m_1,m_1-m_2,\bm{A}w-D).
\end{array}
\end{equation}
We next prove that the qualification condition 
\begin{equation*}
0 \in \interior \left( \dom(\mathcal{G}) -  \mathcal{A} \dom(\mathcal{F}) \right)
\end{equation*}
is satisfied.  This is equivalent to show the existence of $\alpha>0$ such that for any $\varepsilon = (\varepsilon_1,\varepsilon_2,\varepsilon_3) \in \mathcal{K}$, with $\| \varepsilon \|  \leq \alpha$, there exists $(m_1,w,m_2,D) \in \dom(\mathcal{F})$ satisfying
\begin{equation*}  
(\bm{S}w - m_1 +\varepsilon_1,m_1-m_2 + \varepsilon_2,\bm{A}w - D+\varepsilon_3) \in \dom(\mathcal{G}) = \{  - \bar{m}_0\} \times \{0\} \times  \{0\}.
\end{equation*}
This is a direct consequence of Assumption \ref{A:m-pi-kolmogorov}.
Therefore, we can apply the Fenchel-Rockafellar theorem (see \cite[Theorem 31.2]{rockafellar1970convex}) to problem \eqref{Pb:P''}. It follows that the following dual problem has the same value as \eqref{Pb:P''} and possesses 
a solution:
\begin{equation} \label{Pb:dual-fenchel} \tag{$\mathfrak{D}$}
\inf_{(u,\gamma,P) \in \mathcal{K}}
\mathcal{F}^\star(-\mathcal{A}^\star(u,\gamma,P)) + \mathcal{G}^\star(u,\gamma,P).
\end{equation}
It remains to calculate $\mathcal{F}^\star$, $\mathcal{G}^\star$, and $\mathcal{A}^\star$.
For any $(s,x) \in \bar{\mathcal{T}} \times S$, we define
\begin{align} \nonumber Q_{s,x}  & = \begin{cases} \left\{ (a,b) \in \mathbb{R} \times \mathbb{R}(S), \quad \ell^\star(s,x,b) + a  \leq 0 \right\},& \text{ if } s<T,\\
 \left\{ a \in \mathbb{R}, \quad a = 0 \right\},& \text{ if } s=T.
\end{cases}
\end{align}
We then define
\begin{equation} \label{eq:def_Q}
Q = \prod_{(s,x) \in  \bar{\mathcal{T}} \times S} Q_{s,x}.
\end{equation}
For any $(y_1,y_2,y_3,y_4) \in \mathcal{C}$ we have by Lemma \ref{lemma:persp} that
\begin{equation} \label{eq:fenchel_F}
\mathcal{F}^\star(y_1,y_2,y_3,y_4) =  \chi_{Q}(y_1,y_2) + \sum_{t \in \mathcal{T}} \bm{\phi}^\star[y_4](t) + \sum_{s \in \bar{\mathcal{T}}}\bm{F}^\star[y_3](s). \nonumber
\end{equation}
The adjoint operator $\mathcal{A}^\star \colon \mathcal{K}\to \mathcal{C}$ is given by
\begin{equation} \nonumber
\mathcal{A}^\star(u,\gamma,P) = (\gamma-u,\bm{A}^\star P + \bm{S}^\star u,-\gamma,-P).
\end{equation}
It follows that
\begin{equation*}
\mathcal{F}^\star(-\mathcal{A}^\star(u,\gamma,P))   =  \chi_{Q}( u- \gamma, - \bm{A}^\star P - \bm{S}^\star u) + \sum_{t \in \mathcal{T}} \bm{\phi}^\star[P](t) + \sum_{s \in \bar{\mathcal{T}}}\bm{F}^\star[\gamma](s).
\end{equation*}
Moreover, $\mathcal{G}^\star(u,\gamma,P)   =  - \langle u(0,\cdot), m_0\rangle.$ It follows that \eqref{Pb:dual} and \eqref{Pb:dual-fenchel} are equivalent, which concludes the proof of the theorem.
 \end{proof}

\subsection{A new dual problem}

We introduce in this section a new optimization problem, equivalent to \eqref{Pb:dual}.
We define the mapping $\bm{U} \colon \mathcal{U} \to \mathbb{R}(\bar{\mathcal{T}} \times S)$ which associates with $(\gamma,P) \in \mathcal{U}$ the solution $u \in \mathbb{R}(\bar{\mathcal{T}} \times S)$ to the dynamic programming equation
\begin{equation}  \label{definition:U}
\begin{cases}
 u(t,x)+ \bm{\ell}^\star[-\bm{A}^\star P - \bm{S}^\star u](t,x)  = \gamma(t,x) & (t,x) \in \mathcal{T} \times S,\\
 u(T,x) =  \gamma(T,x), & x\in S.
\end{cases}
\end{equation}
We define the following problem
\begin{align} \label{definition:new_dual} \tag{{\~D}}
\max_{(\gamma,P) \in \mathcal{U}} \tilde{\mathcal{D}}(\gamma,P) & := \mathcal{D}(\bm{U}[\gamma,P],\gamma,P) \\
& = \langle \bar{m}_0, \bm{U}[\gamma,P] \rangle -\sum_{t\in \mathcal{T}} \bm{\phi}^\star[P](t) -  \sum_{s\in \bar{\mathcal{T}}} \bm{F}^\star[\gamma](s). \nonumber 
\end{align}
Note that the above dual criterion is of similar nature as the dual criterion in \cite[Remark 2.3]{chow2019algorithm}.

\begin{lemma} \label{lemma:DDprime}
Problems \eqref{Pb:dual} and \eqref{definition:new_dual} have the same value. Moreover, for any solution $(u,\gamma,P)$ to \eqref{Pb:dual}, $(\gamma,P)$ is a solution to \eqref{definition:new_dual}; conversely, for any solution $(\gamma,P)$ to \eqref{definition:new_dual} (there exists at least one), $(\bm{U}[\gamma,P],\gamma,P)$ is a solution to \eqref{Pb:dual}.
\end{lemma}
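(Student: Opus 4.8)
The plan is to show that for each fixed $(\gamma,P) \in \mathcal{U}$, the function $\bm{U}[\gamma,P]$ is the pointwise largest $u$ admissible in \eqref{Pb:dual}, and then to exploit the nonnegativity of $m_0$ to conclude that replacing an arbitrary feasible $u$ by $\bm{U}[\gamma,P]$ can only increase the objective $\mathcal{D}$. Before that I would record two preliminary facts. Since $\dom(\ell(t,x,\cdot)) \subseteq \Delta(S)$ is nonempty and bounded (Assumption \ref{A:l-convex}), the conjugate $\ell^\star(t,x,\cdot)$ is finite-valued; consequently the backward recursion defining \eqref{definition:U} admits a unique solution, so $\bm{U}[\gamma,P]$ is well defined for every $(\gamma,P) \in \mathcal{U}$. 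I would also isolate the key monotonicity property: because the supremum defining $\ell^\star(t,x,\cdot)$ ranges over $\rho \in \Delta(S)$ with $\rho \geq 0$, the map $q \mapsto \ell^\star(t,x,q)$ is nondecreasing with respect to the componentwise order.

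The core of the argument is a comparison principle, proved by backward induction on $t$: every $u$ feasible for \eqref{Pb:dual} satisfies $u(t,x) \leq \bm{U}[\gamma,P](t,x)$ for all $(t,x)$. At $t = T$ both sides equal $\gamma(T,\cdot)$. Assuming $u(t+1,\cdot) \leq \bm{U}[\gamma,P](t+1,\cdot)$, the arguments $-\bm{A}^\star P - \bm{S}^\star u$ and $-\bm{A}^\star P - \bm{S}^\star \bm{U}[\gamma,P]$ of $\ell^\star$ at $(t,x)$ are ordered, the former being the larger since $\bm{S}^\star$ only reads $u(t+1,\cdot)$ and enters with a minus sign. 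Combining the monotonicity of $\ell^\star$ with the feasibility inequality $u(t,x) \leq \gamma(t,x) - \bm{\ell}^\star[-\bm{A}^\star P - \bm{S}^\star u](t,x)$ and the defining equality for $\bm{U}[\gamma,P]$ then yields $u(t,\cdot) \leq \bm{U}[\gamma,P](t,\cdot)$, completing the induction.

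With the comparison principle in hand, the equality of values is immediate. Since $m_0 \geq 0$ and $\mathcal{D}$ depends on $u$ only through $\langle m_0, u(0,\cdot)\rangle$, any feasible $(u,\gamma,P)$ satisfies $\mathcal{D}(u,\gamma,P) \leq \mathcal{D}(\bm{U}[\gamma,P],\gamma,P) = \tilde{\mathcal{D}}(\gamma,P)$; and $\bm{U}[\gamma,P]$ is itself feasible because it satisfies the constraints with equality, so the bound is attained. Taking suprema over feasible triples on the left and over $(\gamma,P) \in \mathcal{U}$ on the right gives $\val({\normalfont \text{D}}) = \val({\normalfont \tilde{\text{D}}})$.

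The solution correspondence and existence then follow by bookkeeping. If $(u,\gamma,P)$ solves \eqref{Pb:dual}, then $\val({\normalfont \tilde{\text{D}}}) = \val({\normalfont \text{D}}) = \mathcal{D}(u,\gamma,P) \leq \tilde{\mathcal{D}}(\gamma,P) \leq \val({\normalfont \tilde{\text{D}}})$, forcing $(\gamma,P)$ to be optimal for \eqref{definition:new_dual}; conversely, for an optimal $(\gamma,P)$ the feasible point $(\bm{U}[\gamma,P],\gamma,P)$ attains the common value and hence solves \eqref{Pb:dual}. Existence of at least one solution to \eqref{definition:new_dual} is inherited from Theorem \ref{thm:qualif}, which provides a solution of \eqref{Pb:dual} whose $(\gamma,P)$-component is then optimal for \eqref{definition:new_dual}. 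The main obstacle is the comparison principle of the second paragraph; once the monotonicity of $\ell^\star$ and the sign of $m_0$ are in place, everything else is routine.
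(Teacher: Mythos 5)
Your proposal is correct and follows essentially the same route as the paper: feasibility of $(\bm{U}[\gamma,P],\gamma,P)$ gives one inequality, and the other comes from the comparison principle $u \leq \bm{U}[\gamma,P]$, proved by backward induction using the monotonicity of $\ell^\star(t,x,\cdot)$ that stems from $\dom(\ell(t,x,\cdot)) \subseteq \Delta(S)$. You merely make explicit two points the paper leaves implicit (the finiteness of $\ell^\star$ ensuring $\bm{U}$ is well defined, and the role of $m_0 \geq 0$ in passing from $u \leq \hat{u}$ to $\mathcal{D}(u,\gamma,P) \leq \mathcal{D}(\hat{u},\gamma,P)$), which is fine.
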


\begin{proof}
Let $(\gamma,P) \in \mathcal{U}$. Then $(u:=\bm{U}[\gamma,P],\gamma,P)$ is feasible for problem \eqref{Pb:dual} and by definition, $\mathcal{D}(u,\gamma,P)= \tilde{\mathcal{D}}(\gamma,P)$. Therefore, val(\ref{Pb:dual}) $\geq$ val(\ref{definition:new_dual}). 

Conversely, let $(u,\gamma,P)$ be feasible for \eqref{Pb:dual}. Let $\hat{u}= \bm{U}[\gamma,P]$. Now we claim that $\hat{u}(t,x) \geq u(t,x)$, for any $(t,x) \in \bar{\mathcal{T}} \times S$ (this is nothing but a comparison principle for our dynamic programming equation).
The proof of the claim relies on a monotonicity property of $\ell^\star$. Given $b$ and $b' \in \R(S)$, we say that $b \leq b'$ if $b(x) \leq b'(x)$, for all $x \in S$. Since $\ell(t,x,\cdot)$ has its domain included in $\Delta(S)$, we have
\begin{equation*}
b \leq b' \Longrightarrow \ell^\star(t,x,b) \leq \ell^\star(t,x,b').
\end{equation*}
Using the above property, it is easy to prove the claim by backward induction. It follows that $\tilde{\mathcal{D}}(\gamma,P) = \mathcal{D}(\hat{u},\gamma,P) \geq \mathcal{D}(u,\gamma,P)$ and finally, val(\ref{definition:new_dual}) $\geq$ val(\ref{Pb:dual}). Thus the two problems have the same value. 

The other claims of the lemma are then easy to verify.
 \end{proof}

\begin{lemma} \label{lemma:U-concave}
For any $(t,x) \in \mathcal{T} \times S$, the map $(\gamma,P) \in \mathcal{U} \mapsto \bm{U}[\gamma,P](t,x)$ is concave.
\end{lemma}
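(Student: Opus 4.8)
The plan is to prove concavity by a backward induction over time, after rewriting the defining equation \eqref{definition:U} of $\bm{U}[\gamma,P]$ in its equivalent Bellman (infimum) form; the point is that a pointwise infimum of affine functions is concave, and concavity is stable under the operations appearing in the dynamic programming recursion.

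First I would record the key reformulation. By the definition of the Fenchel transform of $\ell(t,x,\cdot)$ together with the inclusion $\dom(\ell(t,x,\cdot)) \subseteq \Delta(S)$ from Assumption \ref{A:l-convex}, for any $v \in \mathbb{R}(S)$ one has
\begin{equation*}
- \ell^\star\big(t,x,-v\big) = \inf_{\rho \in \Delta(S)} \Big[ \ell(t,x,\rho) + \langle \rho, v \rangle \Big].
\end{equation*}
Applying this with $v = \alpha(t,x,\cdot)P(t) + u(t+1,\cdot)$ and unfolding $\bm{A}^\star$ and $\bm{S}^\star$, equation \eqref{definition:U} defining $u = \bm{U}[\gamma,P]$ becomes, for $(t,x) \in \mathcal{T}\times S$,
\begin{equation*}
\bm{U}[\gamma,P](t,x) = \gamma(t,x) + \inf_{\rho \in \Delta(S)} \Big[ \ell(t,x,\rho) + P(t)\sum_{y\in S}\rho(y)\alpha(t,x,y) + \sum_{y\in S}\rho(y)\, \bm{U}[\gamma,P](t+1,y) \Big],
\end{equation*}
with terminal value $\bm{U}[\gamma,P](T,x) = \gamma(T,x)$. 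The terminal value is affine in $(\gamma,P)$, which furnishes the base case of the induction.

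For the inductive step I would assume that $(\gamma,P) \mapsto \bm{U}[\gamma,P](t+1,y)$ is concave for every $y \in S$ and fix $(t,x) \in \mathcal{T}\times S$. For each fixed $\rho \in \Delta(S)$, the bracketed expression above is, as a function of $(\gamma,P)$, the sum of the affine terms $\gamma(t,x)$, the constant $\ell(t,x,\rho)$ and $P(t)\sum_y \rho(y)\alpha(t,x,y)$, plus the term $\sum_y \rho(y)\,\bm{U}[\gamma,P](t+1,y)$. Since $\rho(y) \geq 0$ for all $y$, this last term is a nonnegative combination of the concave maps $(\gamma,P)\mapsto\bm{U}[\gamma,P](t+1,y)$, hence concave; therefore the whole bracket is concave in $(\gamma,P)$ for each $\rho$. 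Taking the infimum over $\rho \in \Delta(S)$ preserves concavity, so $(\gamma,P)\mapsto \bm{U}[\gamma,P](t,x)$ is concave. The induction runs from $t = T$ down to $t = 0$ and yields the claim for all $(t,x) \in \mathcal{T}\times S$.

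The only genuinely delicate point is the reformulation in the first step, and it is precisely there that $\dom(\ell(t,x,\cdot)) \subseteq \Delta(S)$ is used: it lets one identify the conjugate with a minimization over $\Delta(S)$ and, crucially, guarantees $\rho \geq 0$, which is what makes the coupling term $\sum_y \rho(y)\,\bm{U}[\gamma,P](t+1,y)$ concavity-preserving in the induction. Everything else is routine. As an alternative one could bypass the induction via the stochastic representation \eqref{eq:pb_rep_agent}: $\bm{U}[\gamma,P](t,x)$ is the infimum over policies $\pi$ of the expected cost-to-go from $(t,x)$, which for each fixed $\pi$ is affine in $(\gamma,P)$ (the $\gamma$- and $P$-terms enter linearly, the $\ell$-term not at all), so that an infimum of affine functions is again concave; I would nonetheless favour the self-contained induction above.
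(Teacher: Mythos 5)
Your proof is correct, but it takes a genuinely different route from the paper. The paper's argument goes through the stochastic representation: by the dynamic programming principle, $\bm{U}[\gamma,P](t,x)$ equals the infimum over Markov policies $\pi$ of an expected cost starting from $(t,x)$, and for each fixed $\pi$ that expected cost is affine in $(\gamma,P)$ (the $\gamma$- and $P$-terms enter linearly through the expectation, the $\ell$-term not at all); concavity then follows because an infimum of affine functions is concave. This is precisely the ``alternative'' you sketch in your last sentence. Your primary argument instead works directly with the defining recursion \eqref{definition:U}: you rewrite $-\bm{\ell}^\star[-\bm{A}^\star P-\bm{S}^\star u]$ as an infimum over $\Delta(S)$ (legitimate since $\dom(\ell(t,x,\cdot))\subseteq\Delta(S)$) and propagate concavity backward in time, using that $\rho\geq 0$ makes $\sum_y\rho(y)\,\bm{U}[\gamma,P](t+1,y)$ a nonnegative combination of concave maps and that a pointwise infimum of concave functions is concave. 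The paper's route is shorter on the page, but it silently relies on the identification of the solution of \eqref{definition:U} with the value function of the control problem \eqref{eq:pb_rep_agent} restricted to the interval $\{t,\dots,T\}$ — an identification that is itself usually established by exactly the backward induction you carry out. Your version is therefore more self-contained given that $\bm{U}$ is \emph{defined} through the dynamic programming equation rather than as a value function; the paper's version is more conceptual and makes the affine dependence on the coupling terms transparent. Both hinge on the same structural fact, namely $\dom(\ell(t,x,\cdot))\subseteq\Delta(S)$.
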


\begin{proof}
Let $(t,x) \in \mathcal{T} \times S$. Given $\pi \in \Delta$, consider the Markov chain $(X_s^\pi)_{s=t,...,T}$ defined by
\begin{equation} \nonumber
\mathbb{P}\left(X_{s+1}^\pi = y \vert X_{s}^\pi = x \right) = \pi(s,x,y), \quad \forall s= t,...,T-1, \quad X_t^\pi= x.
\end{equation}
By the dynamic programming principle, we have
\begin{equation*}
\mathcal{U}[\gamma,P](t,x)
= \inf_{\pi \in \Delta}
 \mathbb{E} \Big(  \sum_{s=t}^T c_{\gamma,P}(t,X_{s}^\pi,X_{s+1}^\pi,\pi(s,X_s^\pi)) +  \gamma(T,X_T^\pi) \Big).
\end{equation*}
The criterion to be minimized in the above equality is affine with respect to $(\gamma,P)$, thus it is concave. The infimum of a family of concave functions is again concave, therefore, $\mathcal{U}[\gamma,P](t,x)$ is concave with respect to $(\gamma,P)$.
 \end{proof}
As a consequence of the above Lemma, the criterion $\tilde{\mathcal{D}}$ is concave.

%

\section{Connection between the MFG system and potential problems \label{sec:connexion}}

The connection between the MFG system and the potential problems can be established with the help of seven conditions, which we introduce first.
We say that $(m_1,w,m_2,D) \in \mathcal{C}$ and $(u,\gamma,P) \in \mathcal{K}$ satisfy the condition {\normalfont \text{(C1)}} if for any $(t,x) \in \mathcal{T} \times S$,
\begin{equation*}
\begin{array}{l}
\text{either: } \quad  \!
\begin{cases}
\begin{array}{l}
u(t,x)+ \bm{\ell}^\star[-\bm{A}^\star P - \bm{S}^\star u](t,x)  \leq \gamma(t,x), \\
(m_1(t,x),w(t,x, \cdot)) = (0,0),
\end{array}
\end{cases}
\\[2em]
\text{or: } \quad \! \begin{cases}
\begin{array}{l}
u(t,x)+ \bm{\ell}^\star[-\bm{A}^\star P - \bm{S}^\star u](t,x)  = \gamma(t,x), \\
\bm{\ell}[\pi](t,x) + \bm{\ell}^\star[-\bm{A}^\star P - \bm{S}^\star u](t,x) + \langle \pi(t,x) , (\bm{A}^\star P + \bm{S}^\star u)(t,x) \rangle = 0, \\
m_1(t,x) > 0,
\end{array}
\end{cases}
\end{array}
\end{equation*}
where $ \pi(t,x) = w(t,x)/m_1(t,x)$. We say that the conditions {\normalfont \text{(C2-C7)}} are satisfied if
\begin{equation} \nonumber
\begin{array}{clcl}
({\normalfont \text{C2}}) & u(T) = \gamma(T), \qquad & ({\normalfont \text{C5}}) & m_1 =\bm{S}w + \bar{m}_0,\\
({\normalfont \text{C3}}) &  \gamma \in \partial \bm{F}[m_2], & ({\normalfont \text{C6}}) & m_1 = m_2, \\
({\normalfont \text{C4}}) &  P \in \partial \bm{\phi}[D], & ({\normalfont \text{C7}}) & D = \bm{A}w.\\
\end{array}
\end{equation}
We show in the next lemma that the conditions (C1-C7) are necessary and sufficient optimality conditions for \eqref{Pb:P''} and \eqref{Pb:dual-fenchel}.

\begin{lemma} \label{lemma:complementary-condition}
We have that $(m_1,w,m_2,D) \in \mathcal{C}$ and $(u,\gamma,P) \in \mathcal{K}$ are respectively solutions of \eqref{Pb:P''} and \eqref{Pb:dual-fenchel} if and only if the conditions {\normalfont \text{(C1-C7)}} hold. 
\end{lemma}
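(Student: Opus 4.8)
The plan is to read off the conditions (C1--C7) as the Fenchel extremality relations for the primal-dual pair \eqref{Pb:P''}--\eqref{Pb:dual-fenchel}, exploiting the strong duality and attainment already established (primal solutions by Lemma \ref{lemma:L-L}, dual value and solution by Theorem \ref{thm:qualif}). For a problem $\inf_x \mathcal{F}(x) + \mathcal{G}(\mathcal{A}x)$ with dual $\inf_y \mathcal{F}^\star(-\mathcal{A}^\star y) + \mathcal{G}^\star(y)$, under strong duality with attainment on both sides, feasible $\bar{x}=(m_1,w,m_2,D)$ and feasible $\bar{y}=(u,\gamma,P)$ are \emph{simultaneously} optimal if and only if
\[
-\mathcal{A}^\star(u,\gamma,P) \in \partial \mathcal{F}(m_1,w,m_2,D), \qquad (u,\gamma,P) \in \partial\mathcal{G}\big(\mathcal{A}(m_1,w,m_2,D)\big).
\]
Indeed, writing these two inclusions as tight Fenchel--Young equalities and summing them shows that the primal and dual objectives add up to zero, so weak duality gives optimality of both; conversely, optimality together with the zero duality gap furnished by Theorem \ref{thm:qualif} forces each Fenchel--Young inequality to be an equality. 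The whole proof then reduces to translating these two inclusions, using the adjoint $\mathcal{A}^\star(u,\gamma,P) = (\gamma - u,\; \bm{A}^\star P + \bm{S}^\star u,\; -\gamma,\; -P)$ computed in the proof of Theorem \ref{thm:qualif}, so that $-\mathcal{A}^\star(u,\gamma,P) = (u-\gamma,\; -\bm{A}^\star P - \bm{S}^\star u,\; \gamma,\; P)$.

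For the first inclusion I would use that $\mathcal{F}$ is \emph{separable}: it is a sum over $(t,x) \in \mathcal{T}\times S$ of $\tilde{\ell}(t,x,\cdot,\cdot)$ evaluated at $(m_1(t,x), w(t,x,\cdot))$, plus $\sum_s F(s,\cdot)$ evaluated at $m_2$, plus $\sum_t \phi(t,\cdot)$ evaluated at $D$. Since each summand is proper convex l.s.c., the subdifferential factorizes, and the inclusion splits into four families. The $m_2$-block gives $\gamma \in \partial\bm{F}[m_2]$, i.e.\@ (C3), and the $D$-block gives $P \in \partial\bm{\phi}[D]$, i.e.\@ (C4). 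For the $(m_1,w)$-block I would invoke Lemma \ref{lemma:hhstar-fenchel} with $h = \ell(t,x,\cdot)$, $(\theta,x)=(m_1(t,x),w(t,x,\cdot))$ and $(\theta^\star,x^\star) = (u(t,x)-\gamma(t,x),\, -(\bm{A}^\star P + \bm{S}^\star u)(t,x))$: its two alternatives are exactly the two branches of (C1), with $\pi(t,x)=w(t,x)/m_1(t,x)$ on the branch $m_1(t,x)>0$. Finally, since $\mathcal{F}$ does not depend on $m_1(T,\cdot)$ (consistently with the degenerate component $Q_{T,x}=\{0\}$ of \eqref{eq:def_Q}), the corresponding coordinate of the inclusion forces $u(T,\cdot)-\gamma(T,\cdot)=0$, that is (C2).

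For the second inclusion, since $\mathcal{G}(y_1,y_2,y_3) = \chi(y_1 + \bar{m}_0) + \chi(y_2) + \chi(y_3)$ is the indicator of the single point $\{-\bar{m}_0\}\times\{0\}\times\{0\}$, the condition $(u,\gamma,P) \in \partial\mathcal{G}(\mathcal{A}(m_1,w,m_2,D))$ holds precisely when $\mathcal{A}(m_1,w,m_2,D)$ equals that point, i.e.\@ $\bm{S}w - m_1 = -\bar{m}_0$, $m_1 - m_2 = 0$, $\bm{A}w - D = 0$; these are (C5), (C6), (C7), and the attached Fenchel--Young equality is then automatic. Collecting the two inclusions yields (C1--C7) and conversely, which is the claim. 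I expect the only delicate points to be bookkeeping: tracking the final-time ($s=T$) components so that (C2) emerges from the part of the problem that is degenerate in $m_1(T,\cdot)$ rather than from the perspective conjugate, and observing that the subdifferential inclusions already encode primal and dual feasibility, so that no separate feasibility verification is needed beyond what (C1--C7) themselves assert.
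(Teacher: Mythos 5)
Your proposal is correct and follows essentially the same route as the paper: the paper writes the optimality criterion as the vanishing of the sum of the two Fenchel--Young gaps $\mathcal{F}(x)+\mathcal{F}^\star(-\mathcal{A}^\star y)+\langle x,\mathcal{A}^\star y\rangle$ and $\mathcal{G}(\mathcal{A}x)+\mathcal{G}^\star(y)-\langle \mathcal{A}x,y\rangle$, which is exactly your pair of subdifferential inclusions, and then decomposes term by term using separability, identifying (C1) via Lemma \ref{lemma:hhstar-fenchel} and (C2) from the degenerate $Q_{T,x}$ block just as you do.
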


\begin{proof}
Let $(m_1,w,m_2,D) \in \mathcal{C}$ and $(u,\gamma,P)  \in \mathcal{K}$. We define the two quantities $a$ and $b$ as follows:
\begin{align*}
a  = & \ \mathcal{F}(m_1,w,m_2,D) + \mathcal{F}^\star(-\mathcal{A}^\star(u,\gamma,P)) + \langle (m_1,w,m_2,D),  \mathcal{A}^\star(u,\gamma,P) \rangle,\\[1em]
b = &\ \mathcal{G}(\mathcal{A}(m_1,w,m_2,D))+ \mathcal{G}^\star(u,\gamma,P) - \langle \mathcal{A}(m_1,w,m_2,D), (u,\gamma,P) \rangle. 
\end{align*}
By Theorem \ref{thm:qualif}, $(m_1,w,m_2,D) \in \mathcal{C}$ and $(u,\gamma,P) \in \mathcal{K}$ are respectively solutions of \eqref{Pb:P''} and \eqref{Pb:dual-fenchel} if and only if $a+b = 0$.
Then we have the following decomposition
\begin{align} \nonumber
a  = & \  \sum_{(s,x) \in \mathcal{T} \times S} a_1(t,x)  + \sum_{x \in S} a_2(x)   + \sum_{s \in \bar{\mathcal{T}}} a_3(s) + \sum_{t \in \mathcal{T}} a_4(t),\\[1em]
\nonumber b  = & \ \sum_{t\in \mathcal{T}} b_1(t) +  \sum_{s\in \bar{\mathcal{T}}} b_2(s)  + b_3(s), \nonumber
\end{align}
where
\begin{equation} \nonumber
\begin{array}{rl}
a_1(t,x) := &  \tilde{\bm{\ell}}[m_1,w](t,x) +   \chi_{Q_{t,x}}((\gamma-u)(t,x), (- \bm{A}^\star P - \bm{S}^\star u)(t,x))  \\  & + \langle m_1(t,x), (u - \gamma)(t,x) \rangle +
\langle w(t,x),(\bm{A}^\star P + \bm{S}^\star u)(t,x) \rangle, \nonumber
\\[1em]
a_2(x)  := &\chi_{Q_{T,x}}((\gamma-u)(T,x)) + \langle m_1(T,x), (u - \gamma)(T,x) \rangle,
\\[1em]
a_3(s)  := & \bm{F}[m_2](s) + \bm{F}^\star[\gamma](s) - \langle m_2(s),\gamma(s) \rangle,
\\[1em]
a_4(t)   := & \bm{\phi}[D](t) + \bm{\phi}^\star[P](t) -\langle D(t),P(t) \rangle,
\\[1em]
b_1(t)  := & \chi((\bm{A}w-D)(t))  - \langle P(t),(\bm{A}w-D)(t) \rangle,\\[1em]
b_2(s) := & \chi((\bm{S}w - m_1 + \bar{m}_0)(s)) - \langle u(s),(\bm{S}w - m_1  + \bar{m}_0)(s) \rangle,\\[1em]
b_3(s)  := & \chi((m_1-m_2)(s)) - \langle \gamma(s),(m_1-m_2)(s) \rangle,
\end{array}
\end{equation}
for any $(t,s,x) \in \mathcal{T} \times \bar{\mathcal{T}} \times S$. 
By the Fenchel-Young inequality,
\begin{equation} \nonumber
\begin{array}{llll}
a_1(s,x)\geq  0, &  a_2(x) \geq  0, &  a_3(s)  \geq  0, & a_4(t)  \geq  0, \\
b_1(t) \geq  0,&  b_2(s)  \geq  0, & b_3(s) \geq  0. &
\end{array} 
\end{equation}
Then $a+b = 0$ if and only if
\begin{equation}
\begin{array}{llll}
a_1(s,x) =  0, &  a_2(x) =  0, &  a_3(s)  =  0, & a_4(t)  =  0, \\
b_1(t) =  0,&  b_2(s) =  0, & b_3(s) =  0. &
\end{array}  \label{eq:local-conditions2} 
\end{equation}
By Lemma \ref{lemma:hhstar-fenchel} we have that (C1) holds if and only if $a_1(s,x) = 0$ and it is obvious that (C2-C7) holds if and only if $a_2(x) =  a_3(s) = a_4(t)  =  b_1(t) = b_2(s) = b_3(s) =  0$.
Then the conditions (C1-C7) hold if and only if \eqref{eq:local-conditions2} holds, which concludes the proof.
 \end{proof}

\begin{proposition}\label{mfg-to-potential}
Let $(m_1,\pi,u,\gamma,P) \in \mathcal{R} \times \mathcal{K}$ be a solution to \eqref{eq:MFG} and let 
\begin{equation*} 
w(t,x,\cdot) = m_1(t,x) \pi(t,x,\cdot), \quad m_2 = m_1, \quad D = \bm{A}w,
\end{equation*}
for any $(t,x) \in \mathcal{T} \times S$. Then $(m_1,w,m_2,D)$ and $(u,\gamma,P)$ are respectively solutions to \eqref{Pb:P''} and \eqref{Pb:dual-fenchel}.
Moreover, $(m_1,w)$ is solution to \eqref{eq:pot_pb_conv}, $(m_1,\pi)$ is solution to \eqref{eq:pot_pb}, and $(\gamma,P)$ is solution to \eqref{definition:new_dual}.
\end{proposition}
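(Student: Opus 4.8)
The plan is to route everything through the optimality-condition characterization of Lemma \ref{lemma:complementary-condition}: the core of the argument is to check that the data built from the \eqref{eq:MFG} solution, namely $(m_1,w,m_2,D)$ with $w=m_1\pi$, $m_2=m_1$, $D=\bm{A}w$, together with $(u,\gamma,P)$, satisfies all seven conditions (C1)--(C7). Once this is established, Lemma \ref{lemma:complementary-condition} directly yields that $(m_1,w,m_2,D)$ solves \eqref{Pb:P''} and $(u,\gamma,P)$ solves \eqref{Pb:dual-fenchel}; the remaining assertions then follow by transporting these solutions along the equivalences already proved in the paper.

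First I would dispatch the bookkeeping conditions (C2)--(C7). Conditions (C6) and (C7) hold by the very definition of $m_2$ and $D$, and (C2) is the terminal condition in (\ref{eq:MFG},i). For (C3), since $m_2=m_1=m$, equation (\ref{eq:MFG},iv) reads $\gamma\in\partial\bm{F}[m_2]$. For (C4) I would compute $\bm{A}w(t)=\sum_{(x,y)\in S^2} m_1(t,x)\pi(t,x,y)\alpha(t,x,y)=\bm{Q}[m_1,\pi](t)$, so that $D=\bm{Q}[m_1,\pi]$ and (\ref{eq:MFG},v) becomes $P\in\partial\bm{\phi}[D]$. For (C5), substituting $w=m_1\pi$ into $\bm{S}w+\bar{m}_0$ reproduces exactly the Kolmogorov equation (\ref{eq:MFG},iii), in both its interior and initial parts.

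The main step is (C1). Here I would invoke the equivalent $\bm{\ell}^\star$-form of the dynamic programming equations (\ref{eq:MFG},i--ii) recorded at the end of Section \ref{section:discrete-mfg}, which holds with equalities everywhere for a solution of \eqref{eq:MFG}, and then split on the value of $m_1(t,x)$. When $m_1(t,x)=0$, the definition $w=m_1\pi$ forces $w(t,x,\cdot)=0$, so $(m_1,w)(t,x)=(0,0)$; since the equality $u+\bm{\ell}^\star[-\bm{A}^\star P-\bm{S}^\star u]=\gamma$ holds (a fortiori the inequality $\leq\gamma$), the first alternative of (C1) is met. When $m_1(t,x)>0$, one has $\pi(t,x)=w(t,x)/m_1(t,x)$ by construction, the equation gives the equality $u+\bm{\ell}^\star[\cdots]=\gamma$, and the reformulated version of (\ref{eq:MFG},ii) is precisely the second displayed identity of the second alternative; thus the second alternative holds. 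This is the only delicate point, since one must correctly match the reformulated dynamic programming equations to the two branches of (C1) and treat the degenerate mass-zero states. With (C1)--(C7) in hand, Lemma \ref{lemma:complementary-condition} closes the first two claims.

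Finally I would harvest the remaining statements. Any point feasible for \eqref{Pb:P''} with finite cost satisfies $m_2=m_1$ and $D=\bm{A}w$, so the objective $\mathcal{F}$ collapses to $\tilde{\mathcal{J}}(m_1,w)$, while the constraint $\bm{S}w-m_1+\bar{m}_0=0$ (condition (C5)) is exactly the constraint of \eqref{eq:pot_pb_conv}; since $\val(\mathfrak{P})=\val(\tilde{\text{P}})$, the pair $(m_1,w)$ solves \eqref{eq:pot_pb_conv}. Because $w=m_1\pi$ and $m_1=m^\pi$ (again a consequence of (C5)), the change of variables of Lemma \ref{lemma:L-L} transports this into a solution $(m_1,\pi)$ of \eqref{eq:pot_pb}. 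Lastly, the equivalence between \eqref{Pb:dual-fenchel} and \eqref{Pb:dual} established in the proof of Theorem \ref{thm:qualif} shows that $(u,\gamma,P)$ solves \eqref{Pb:dual}, and Lemma \ref{lemma:DDprime} then yields that $(\gamma,P)$ solves \eqref{definition:new_dual}, completing the proof.
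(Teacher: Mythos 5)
Your proposal is correct and follows essentially the same route as the paper: verify (C1)--(C7) (which the paper dismisses as ``obviously satisfied'' but you rightly spell out, in particular the case split on $m_1(t,x)=0$ versus $m_1(t,x)>0$ for (C1)), apply Lemma \ref{lemma:complementary-condition}, and then transfer optimality via Lemma \ref{lemma:L-L} and Lemma \ref{lemma:DDprime}. No gaps.
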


\begin{proof}
The conditions (C1-C7) are obviously satisfied.
It immediately follows from Lemma \ref{lemma:complementary-condition} that $(m_1,w,m_2,D)$ and $(u,\gamma,P)$ are optimal for \eqref{Pb:P''} and \eqref{Pb:dual-fenchel}.
The optimality of $(m_1,w)$ and $(m_1,\pi)$ is then deduced from the proof of Lemma \ref{lemma:L-L}. The optimality of $(\gamma,P)$ is a consequence of Lemma \ref{lemma:DDprime}.
 \end{proof}

For any $(m,w) \in \mathcal{R}$, $(u,\gamma,P)\in \mathcal{K}$ we define the set $\bm{\pi}[m,w,u,\gamma,P]$ of controls $ \pi \in \Delta$ satisfying
\begin{equation} \nonumber
\pi(t,x,\cdot) = w(t,x,\cdot)/m(t,x)
\end{equation} 
if $m(t,x) >0$ and 
\begin{equation} \nonumber
\pi(t,x, \cdot) \in \ \underset{\rho  \in \Delta(S)}\argmin\ \ell(t,x,\rho) + \sum_{y \in S} \rho(y)(P(t) \alpha(t,x,y)+ u(t+1,y))
\end{equation} 
if $m(t,x) = 0$, for any $(t,x)\in \mathcal{T}\times S$. Note that for any $\pi \in \bm{\pi}[m,w,u,\gamma,P]$, we have $w(t,x,\cdot)= m(t,x) \pi(t,x,\cdot)$, for any $(t,x) \in \mathcal{T} \times S$.
We have now the following converse property to Proposition \ref{mfg-to-potential}.

\begin{proposition} \label{potential-to-mfg}
Let $(m_1,w,m_2,D)$ and $(u,\gamma,P)$ be respectively solutions to \eqref{Pb:P''} and \eqref{Pb:dual-fenchel}. Let $\hat{u}= \bm{U}[\gamma,P]$ and let $\pi \in \bm{\pi}[m,w,\hat{u},\gamma,P]$. Then $(m,\hat{\pi},\hat{u},\gamma,P)$ is a solution to \eqref{eq:MFG}.
\end{proposition}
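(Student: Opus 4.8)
The plan is to reduce the whole statement to the complementarity characterization of Lemma \ref{lemma:complementary-condition}, but carried out with the dual component $\hat{u}=\bm{U}[\gamma,P]$ in place of the given $u$. Throughout I write $m:=m_1=m_2$ (the equality being \eqref{eq:MFG} condition (C6)) and $\hat\pi$ for the chosen element of $\bm\pi[m,w,\hat u,\gamma,P]$. The first and crucial step is to verify that $(\hat u,\gamma,P)$ is again a solution of \eqref{Pb:dual-fenchel}. Indeed, by the equivalence of \eqref{Pb:dual} and \eqref{Pb:dual-fenchel} established at the end of the proof of Theorem \ref{thm:qualif}, the given solution $(u,\gamma,P)$ of \eqref{Pb:dual-fenchel} solves \eqref{Pb:dual}; by Lemma \ref{lemma:DDprime} the pair $(\gamma,P)$ solves \eqref{definition:new_dual}, and the converse part of that same lemma then gives that $(\bm{U}[\gamma,P],\gamma,P)=(\hat u,\gamma,P)$ solves \eqref{Pb:dual}, hence \eqref{Pb:dual-fenchel}. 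Applying Lemma \ref{lemma:complementary-condition} to the primal solution $(m_1,w,m_2,D)$ and the dual solution $(\hat u,\gamma,P)$, I obtain that the conditions (C1)--(C7), now written with $\hat u$, all hold.

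From here the five lines of \eqref{eq:MFG} are read off one at a time. Equation (\ref{eq:MFG},i) together with its terminal condition is exactly \eqref{definition:U}, which holds by the very definition of $\hat u=\bm{U}[\gamma,P]$: expanding $c_{\gamma,P}$ and using $\sum_y\rho(y)=1$ on $\Delta(S)$, the infimum in (\ref{eq:MFG},i) equals $\gamma(t,x)-\bm\ell^\star[-\bm A^\star P-\bm S^\star\hat u](t,x)$, the passage to the Fenchel conjugate being licit because $\dom(\ell(t,x,\cdot))\subseteq\Delta(S)$ by Assumption \ref{A:l-convex}. The Kolmogorov equation (\ref{eq:MFG},iii) follows from (C5) and (C6), once one recalls that $w(t,x,\cdot)=m(t,x)\hat\pi(t,x,\cdot)$ for $\hat\pi\in\bm\pi[m,w,\hat u,\gamma,P]$ (true by definition of $\bm\pi$ when $m(t,x)>0$, and forced by $w(t,x,\cdot)=0$ when $m(t,x)=0$, since $(m_1,w,m_2,D)\in\dom\mathcal F$). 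Finally (C3) with (C6) gives $\gamma\in\partial\bm F[m_2]=\partial\bm F[m]$, i.e.\ (\ref{eq:MFG},iv); and (C4) with (C7) gives $P\in\partial\bm\phi[D]=\partial\bm\phi[\bm A w]=\partial\bm\phi[\bm Q[m,\hat\pi]]$, i.e.\ (\ref{eq:MFG},v).

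It remains to verify the optimality of the transitions, (\ref{eq:MFG},ii). When $m(t,x)>0$, the ``or'' branch of (C1) (with $\hat u$) asserts the Fenchel--Young equality $\bm\ell[\hat\pi](t,x)+\bm\ell^\star[-\bm A^\star P-\bm S^\star\hat u](t,x)+\langle\hat\pi(t,x),(\bm A^\star P+\bm S^\star\hat u)(t,x)\rangle=0$, which is precisely the condition for $\hat\pi(t,x,\cdot)$ to minimize $\rho\mapsto\ell(t,x,\rho)+\langle\rho,(\bm A^\star P+\bm S^\star\hat u)(t,x)\rangle$ over $\Delta(S)$; by the computation of the previous paragraph this is exactly the arg min in (\ref{eq:MFG},ii). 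When $m(t,x)=0$, the membership $\hat\pi\in\bm\pi[m,w,\hat u,\gamma,P]$ imposes this same arg min property by definition. This closes the verification.

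The only genuine obstacle is the replacement of $u$ by $\hat u$ handled in the first paragraph: the given dual solution $u$ satisfies only the inequality form of the dynamic programming relation, so neither the equality in (\ref{eq:MFG},i) nor the optimality in (\ref{eq:MFG},ii)---both of which must hold with respect to $\hat u$---can be extracted from (C1) as written for $u$. Routing through Lemma \ref{lemma:DDprime} to restore dual optimality for $(\hat u,\gamma,P)$, and only then re-invoking Lemma \ref{lemma:complementary-condition}, is what makes the argument go through; everything else is the bookkeeping of (C1)--(C7) carried out above.
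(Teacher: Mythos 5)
Your proposal is correct and follows essentially the same route as the paper: pass from $(u,\gamma,P)$ to $(\hat u,\gamma,P)=(\bm{U}[\gamma,P],\gamma,P)$ via Lemma \ref{lemma:DDprime}, invoke Lemma \ref{lemma:complementary-condition} to get (C1)--(C7) with $\hat u$, and read off the five equations of \eqref{eq:MFG} using the definitions of $\bm{U}$ and $\bm{\pi}$. Your write-up merely makes explicit the bookkeeping that the paper leaves as ``obviously satisfied.''
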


\begin{proof}
By Lemma \ref{lemma:DDprime}, $(\hat{u},\gamma,P)$ is a solution to \eqref{Pb:dual}. The pairs $(m_1,w,m_2,D)$ and $(\hat{u},\gamma,P)$ are solutions to \eqref{Pb:P''} and \eqref{Pb:dual-fenchel}), respectively, therefore they satisfy conditions (C1-C7), by Lemma \ref{lemma:complementary-condition}. Equations (\ref{eq:MFG},iii-v) are then obviously satisfied. By definition, $\hat{u}$ satisfies (\ref{eq:MFG},i). Finally, (\ref{eq:MFG},ii) is satisfied, by condition (C1) and by definition of the set $\bm{\pi}[m,w,u,\gamma,P]$. It follows that $(m,{\pi},\hat{u},\gamma,P) \in \mathcal{R} \times \mathcal{K}$ is solution to \eqref{eq:MFG}.
 \end{proof}

Since the existence of solutions to \eqref{Pb:P''} and \eqref{Pb:dual-fenchel} has been established in Lemmas \ref{lemma:L-L} and \ref{lemma:DDprime}, we have the following corollary.

 \begin{corollary}
There exists a solution to \eqref{eq:MFG}.
\end{corollary}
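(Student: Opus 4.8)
The plan is to assemble the existence results already obtained for the two optimization problems \eqref{Pb:P''} and \eqref{Pb:dual-fenchel} and then invoke the constructive correspondence of Proposition \ref{potential-to-mfg}. The argument is short precisely because the substantive work has been done in the preceding results; what remains is to verify that the hypotheses of Proposition \ref{potential-to-mfg} are met, namely that both \eqref{Pb:P''} and \eqref{Pb:dual-fenchel} admit solutions.

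First I would secure a primal solution. Assumption \ref{A:m-pi-kolmogorov} is in force and, as noted just after its statement, it entails the feasibility of \eqref{eq:pot_pb} and hence of \eqref{eq:pot_pb_conv}. Lemma \ref{lemma:L-L} then guarantees that \eqref{eq:pot_pb_conv} has a non-empty (bounded) set of solutions; since \eqref{Pb:P''} is merely the rewriting of \eqref{eq:pot_pb_conv} obtained by splitting the variables into $(m_1,w,m_2,D) \in \mathcal{C}$, this produces a solution of \eqref{Pb:P''}. Next I would secure a dual solution: by Theorem \ref{thm:qualif}, the qualification condition allows the Fenchel--Rockafellar theorem to be applied to \eqref{Pb:P''}, which yields both $\val(\mathfrak{P})=\val(\mathfrak{D})$ and the existence of a solution $(u,\gamma,P) \in \mathcal{K}$ of \eqref{Pb:dual-fenchel}.

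With a solution of \eqref{Pb:P''} and a solution of \eqref{Pb:dual-fenchel} in hand, Proposition \ref{potential-to-mfg} applies directly: setting $\hat{u}=\bm{U}[\gamma,P]$ and choosing any $\pi \in \bm{\pi}[m_1,w,\hat{u},\gamma,P]$, the quintuplet $(m_1,\pi,\hat{u},\gamma,P)$ solves \eqref{eq:MFG}, which is exactly the claim. I do not expect a genuine obstacle here; the only point deserving attention is conceptual rather than computational. One must not feed the abstract dual variable $u$ produced by Fenchel--Rockafellar directly into the MFG system, since the system requires an \emph{equality} in its dynamic programming equation (\ref{eq:MFG},i) whereas \eqref{Pb:dual} only carries an inequality. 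Proposition \ref{potential-to-mfg}, relying on the comparison principle established in Lemma \ref{lemma:DDprime}, repairs this by replacing $u$ with $\hat{u}=\bm{U}[\gamma,P]$, the genuine solution of the dynamic programming equation \eqref{definition:U}; this substitution is what makes the construction consistent with \eqref{eq:MFG}.
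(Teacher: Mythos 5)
Your proposal is correct and follows essentially the same route as the paper: existence of a primal solution via Lemma \ref{lemma:L-L} (feasibility being guaranteed by Assumption \ref{A:m-pi-kolmogorov}), existence of a dual solution via the Fenchel--Rockafellar argument of Theorem \ref{thm:qualif} (equivalently, Lemma \ref{lemma:DDprime}), and then Proposition \ref{potential-to-mfg} to assemble a solution of \eqref{eq:MFG}. Your remark about replacing the abstract dual variable $u$ by $\hat{u}=\bm{U}[\gamma,P]$ correctly identifies the one subtlety, which is exactly how the paper handles it.
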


We finish this section with a uniqueness result.
 
\begin{proposition} \label{prop:uniqueness}
Let $(m,\pi,u,\gamma,P)$ and $ (m',\pi',u',\gamma',P')$ be two solutions to the coupled system \eqref{eq:MFG}.
Assume that $F$ and $\phi$ are differentiable with respect to their second variable. Then $(u,\gamma,P)=(u',\gamma',P')$. If moreover, for any $(t,x) \in \mathcal{T} \times S$, $\ell(t,x,\cdot)$ is strictly convex, then $(m,\pi)=(m',\pi')$ and thus \eqref{eq:MFG} has a unique solution.
\end{proposition}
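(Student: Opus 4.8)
The plan is to prove uniqueness of the dual variables $(u,\gamma,P)$ first, and then, under the additional strict convexity of $\ell$, to propagate uniqueness to the primal variables $(m,\pi)$. By Proposition~\ref{mfg-to-potential} each of the two solutions induces a maximizer of the concave problem \eqref{definition:new_dual}, namely $(\gamma,P)$ and $(\gamma',P')$. Since, by Lemma~\ref{lemma:U-concave} together with the convexity of the Fenchel conjugates, the criterion
\[
\tilde{\mathcal D}(\gamma,P) = \langle \bar m_0, \bm U[\gamma,P]\rangle - \sum_{t\in\mathcal T}\bm\phi^\star[P](t) - \sum_{s\in\bar{\mathcal T}}\bm F^\star[\gamma](s)
\]
is concave, its set of maximizers is convex; in particular the midpoint of $(\gamma,P)$ and $(\gamma',P')$ is again a maximizer. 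The whole first part then reduces to showing $(\gamma,P)=(\gamma',P')$, because $u=\bm U[\gamma,P]$ is forced by the single-valuedness of the map $\bm U$ (equivalently by (\ref{eq:MFG},i)), whence $u=u'$ as well.

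To identify the two maximizers I would use the elementary fact that if a finite sum of concave functions attains its maximum at two points, then along the segment joining them each summand is affine: the value being constant and equal to the maximum forces equality in every concavity inequality. Applied to $\tilde{\mathcal D}$, this shows that $\lambda\mapsto\bm F^\star[\gamma_\lambda](s)$ and $\lambda\mapsto\bm\phi^\star[P_\lambda](t)$ are affine on $[0,1]$, where $(\gamma_\lambda,P_\lambda)=(1-\lambda)(\gamma,P)+\lambda(\gamma',P')$. Now differentiability of $F(s,\cdot)$ and $\phi(t,\cdot)$ means that these functions are essentially smooth, hence their conjugates $F^\star(s,\cdot)$ and $\phi^\star(t,\cdot)$ are essentially strictly convex (see \cite[Theorem 26.3]{rockafellar1970convex}). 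Reading (\ref{eq:MFG},iv)--(\ref{eq:MFG},v) through Fenchel reciprocity gives $m(s,\cdot)\in\partial F^\star(s,\gamma(s,\cdot))$ and $m'(s,\cdot)\in\partial F^\star(s,\gamma'(s,\cdot))$, so both endpoints lie in $\dom(\partial F^\star(s,\cdot))$, and likewise $D(t),D'(t)$ place $P(t),P'(t)$ in $\dom(\partial\phi^\star(t,\cdot))$. Since a strictly convex function cannot be affine on a nondegenerate segment contained in the domain of its subdifferential, I conclude $\gamma(s,\cdot)=\gamma'(s,\cdot)$ for every $s$ and $P(t)=P'(t)$ for every $t$, i.e. $(\gamma,P)=(\gamma',P')$ and therefore $(u,\gamma,P)=(u',\gamma',P')$.

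For the second statement, assume $\ell(t,x,\cdot)$ strictly convex, so that $\ell^\star(t,x,\cdot)$ is essentially smooth. The minimization in (\ref{eq:MFG},ii), namely $\min_{\rho\in\Delta(S)}\ \ell(t,x,\rho)+\sum_{y\in S}\rho(y)\big(\alpha(t,x,y)P(t)+u(t+1,y)\big)$, is then the minimization of a strictly convex function over $\Delta(S)$ and has a unique solution, determined solely by $P(t)$ and $u(t+1,\cdot)$. As these data coincide for the two solutions by the first part, (\ref{eq:MFG},ii) forces $\pi(t,x,\cdot)=\pi'(t,x,\cdot)$ for every $(t,x)\in\mathcal T\times S$, so $\pi=\pi'$. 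Finally the Kolmogorov equation (\ref{eq:MFG},iii), a forward linear recursion driven by the common transition $\pi$ and the common initial datum $m_0$, yields $m=m'$ by induction on $t$; hence $(m,\pi)=(m',\pi')$.

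The main obstacle is the passage, in the second paragraph, from differentiability of $F$ and $\phi$ to the equality $(\gamma,P)=(\gamma',P')$: one must ensure that essential strict convexity of the conjugates genuinely rules out affine behaviour along the segment. Concretely, this requires checking that the open segment joining $\gamma(s,\cdot)$ and $\gamma'(s,\cdot)$ remains inside $\dom(\partial F^\star(s,\cdot))$ rather than merely inside $\dom(F^\star(s,\cdot))$, since strict convexity may fail at relative-boundary points. I expect to handle this by exhibiting explicit subgradients along the open segment: averaging subgradients taken at the two endpoints produces a subgradient at each interior point, precisely because $F^\star(s,\cdot)$ is affine there. Everything else is a direct appeal to the duality already established in Proposition~\ref{mfg-to-potential} and Lemma~\ref{lemma:DDprime} or a routine induction.
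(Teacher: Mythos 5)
Your overall architecture (dual uniqueness first, then primal uniqueness via strict convexity of $\ell$) is the right one, and your last paragraph before the ``main obstacle'' discussion --- unique argmin in (\ref{eq:MFG},ii) once $(u,P)$ is fixed, then induction on the Kolmogorov equation --- is exactly the paper's argument. The problem is in the first part. The step you yourself flag as the main obstacle is a genuine gap: to exclude affine behaviour of $F^\star(s,\cdot)$ on the segment joining $\gamma(s,\cdot)$ and $\gamma'(s,\cdot)$ you need the segment (not just its endpoints) to lie in $\dom(\partial F^\star(s,\cdot))$, and your proposed repair --- that averaging subgradients taken at the two endpoints yields a subgradient at each interior point because $F^\star$ is affine there --- is false. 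Take $g=|\cdot|$ on $\mathbb{R}$, $x_0=0$, $x_1=1$, $v_0=-1\in\partial g(x_0)$, $v_1=1\in\partial g(x_1)$: $g$ is affine on $[0,1]$, yet $(1-\lambda)v_0+\lambda v_1=2\lambda-1\notin\partial g(x_\lambda)=\{1\}$ for $\lambda\in(0,1)$. (What is true is that a subgradient at an \emph{interior} point of an affine segment is a subgradient along the whole segment; endpoint subgradients enjoy no such property.) So as written the argument does not close.

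The missing idea --- which is the paper's entire proof and makes the concavity/essential-strict-convexity machinery unnecessary --- is that in the Fenchel--Rockafellar framework \emph{any} primal solution pairs with \emph{any} dual solution in the complementarity conditions of Lemma \ref{lemma:complementary-condition}. By Proposition \ref{mfg-to-potential}, the single quadruple $(m,w,m,D)$ with $w=m\pi$ and $D=\bm{A}w$ built from the first solution solves \eqref{Pb:P''}, while both $(u,\gamma,P)$ and $(u',\gamma',P')$ solve \eqref{Pb:dual-fenchel}; hence conditions (C3)--(C4) hold for the \emph{same} $(m,D)$ with both triples, giving $\gamma=\nabla\bm{F}[m]=\gamma'$ and $P=\nabla\bm{\phi}[D]=P'$ directly from the single-valuedness of the differentials, and then $u=\bm{U}[\gamma,P]=\bm{U}[\gamma',P']=u'$. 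You kept $m$ attached to $\gamma$ and $m'$ attached to $\gamma'$ throughout, which is what forced you into the segment argument. Even if you insist on your route, the correct patch is the same observation: each point of the segment is again a maximizer of \eqref{definition:new_dual}, hence by Lemmas \ref{lemma:DDprime} and \ref{lemma:complementary-condition} its $\gamma$-component lies in $\partial\bm{F}[m]$, which is a singleton --- at which point the detour has bought nothing.
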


\begin{proof}
It follows from Proposition \ref{mfg-to-potential} that $(m,w:=m\pi,m,D:=\bm{A}w )$ is a solution to \eqref{Pb:P''} and that $(u,\gamma,P)$ and $(u',\gamma',P')$  are solutions to \eqref{Pb:dual-fenchel}. Thus by Lemma \ref{lemma:complementary-condition}, the conditions (C3) and (C4) are satisfied, both for $(m,w,m,D)$ and $(u,\gamma,P)$ and for $(m,w,m,D)$ and $(u',\gamma',P')$, which implies that $\gamma= \nabla \bm{F}[m] = \gamma'$
and $P= \nabla \bm{\phi}[D]= P'$. It further follows that $u= \bm{U}[\gamma,P]= \bm{U}[\gamma',P']= u'$.

If moreover $\ell(t,x,\cdot)$ is strictly convex for any $(t,x) \in \mathcal{T} \times S$ then the minimal argument in (\ref{eq:MFG},ii) is unique, which implies that $\pi=\pi'$ and finally that $m=m^\pi=m^{\pi'}=m'$. 
 \end{proof}

\section{Numerical methods \label{sec:numerics}}

In this section we investigate the numerical resolution of the problems \eqref{Pb:P''} and \eqref{Pb:dual-fenchel}. We investigate different methods: primal-dual proximal algorithms, ADMM and ADM-G.
For all methods, it is assumed that the computation of the prox operators (defined below) of $\tilde{\ell}(t,x,\cdot)$, $F(t,\cdot)$ and $\phi(t,\cdot)$ are tractable.
Note that for the method involving the Kullback-Leibler distance in Subsubsection \ref{sec:kl}, the prox of $\tilde{\ell}$ is replaced by a stightly more complex optimization problem.

We explain in the Appendix \ref{Appendix} how to calculate the prox of $\ell$ (and the nonlinear proximator based on the entropy function) in the special case where $\ell$ is linear on its domain.
We explain in Section \ref{sec:errors} how to recover a solution to \eqref{eq:MFG}.

\subsection{Notations}

Let $X_1$ be a subset of $\R^d$, let $\bar{X}_1$ denote its closure. Let $f \colon \bar{X}_1 \rightarrow \R$. Assume that the following assumption holds true.

\begin{assumption} \label{assumption:bregman}
The set $\bar{X}_1$ is convex and
the map $f$ is continuous and 1-strongly convex on $\bar{X}_1$. There exists an open subset $X_2$ containing $X_1$ such that $f$ can be extended to a continuous differentiable function on $X_2$.
\end{assumption}

We define then the Bregman distance $d_f \colon X_1 \times X_1 \rightarrow \R$ by
\begin{equation} \nonumber
d_f(x,y)  = f(x) - f(y) - \langle \nabla f(y), x-y \rangle. \label{def:d_f}
\end{equation}
%
If $f$ is the Euclidean distance $\frac{1}{2}|\cdot |^2$, then $d_f(x,y) = \frac{1}{2} |x-y|^2$.

Given a l.s.c., convex and proper function $g \colon \R^d \to \mathbb{R}$, we define its proximal operator $\prox_g \colon \R^d \rightarrow \R^d$ as follows:
\begin{equation} \nonumber
 \prox_g(x) = \argmin_{y \in \mathbb{R}^d} \frac{1}{2} |x-y|^2 + g(y).
\end{equation}
For any non-empty, convex and closed $K \subseteq \mathbb{R}^d$, we define the projection operator $\proj_K$ of $x\in \mathbb{R}^d$ on $K$ by
\begin{equation} \nonumber
 \proj_K(x) =  \prox_{\chi_K}(x).
\end{equation}

Finally, we denote $\bar{\alpha}(t) = \sum_{(x,y) \in S \times S} \alpha(t,x,y)^2$ for any $t \in \mathcal{T}$.

\subsection{Primal-dual proximal algorithms \label{sec:chambolle-pock}}

In this subsection we present the primal-dual algorithms proposed by A.\@ Chambolle and T.\@ Pock in \cite{chambolle2011first} and \cite{chambolle2016ergodic}.
For the sake of simplicity, we denote by $x$ the primal variable $(m_1,w,m_2,D)$ and by $y$ the dual variable $(u,\gamma,P)$.
The primal-dual algorithms rely on the following saddle-point problem
\begin{equation} \label{eq:duality-chambolle-pock}
\min_{x \in \mathcal{C}} \max_{y \in \mathcal{K}} \, \mathcal{L}(x,y) := \mathcal{F}(x) - \mathcal{G}^\star(y) + \langle \mathcal{A} x, y\rangle,
\end{equation}
which is equivalent to problem \eqref{Pb:P''} (defined in the proof of Theorem \ref{thm:qualif}).
Let $\mathcal{C}_1$ and $\mathcal{K}_1$ be two subsets of $\mathcal{C}$ and $\mathcal{K}$, respectively. Let $f \colon \bar{\mathcal{C}}_1 \rightarrow \R$ and let $g \colon \bar{\mathcal{K}}_1 \rightarrow \R$ satisfy Assumption \ref{assumption:bregman}.


For any $\tau,\sigma > 0$ and for any $(x',y') \in \mathcal{C} \times \mathcal{K}$ we define: \\[1em]
\fbox{\parbox{
\dimexpr\linewidth-2
\fboxsep-2
\fboxrule
}{
\begin{algorithmic}
\STATE Iteration $(\hat{x},\hat{y}) = \mathcal{S}_{\tau,\sigma}[d_f,d_g](x',y')$,
\begin{equation} \label{eq:S-proximal-operator}
\begin{cases}  {\normalfont \text{(i)}} & \hat{x} = \argmin_{x \in \mathcal{C}_1}  \,   \mathcal{F} (x) + \langle x,  \mathcal{A}^\star y' \rangle  + \frac{1}{\tau} d_f (x,x'), \\
{\normalfont \text{(ii)}} & \tilde{x} =   2\hat{x} - x', \\
{\normalfont \text{(iii)}} &  \hat{y} = \argmin_{y \in \mathcal{K}_1} \,   \mathcal{G}^\star(y) - \langle \mathcal{A} \tilde{x}, y \rangle + \frac{1}{\sigma} d_g (y,y'). \\
\end{cases}
\end{equation}
\end{algorithmic}}}
\\

\noindent Then we define the following algorithm.

\begin{algorithm}[H]
\caption{Chambolle-Pock} \label{algo:chambolle-pock}
\begin{algorithmic}
\STATE Choose $\sigma,\tau >0$ and  $(x^0,y^0) \in \mathcal{C} \times \mathcal{K}$
\FOR{$0\leq k < N$} 
\STATE {Compute
$(x^{k+1},y^{k+1}) = \mathcal{S}_{\tau,\sigma}[d_f,d_g](x^k,y^k).
$}
\ENDFOR
\RETURN {$(x^N,y^N)$.}
\end{algorithmic}
\end{algorithm}

\begin{theorem} \label{theo:chamboll_pock}
Let $\tau, \sigma >0 $ be such that $\tau \sigma \|\mathcal{A}\|^2  < 1$, where $\| \mathcal{A} \|$ denotes the operator norm of $\mathcal{A}$ (for the Euclidean norm).
Assume that $\text{dom}(\mathcal{F}) \subseteq \bar{\mathcal{C}}_1$ and $\text{dom}(\mathcal{G}^\star) \subseteq \bar{\mathcal{K}}_1$. Assume that the iteration \eqref{eq:S-proximal-operator} is well-defined, that is, the minimal arguments in \text{(i)} and \text{(iii)} exist.
Let
$(x^k,y^k)_{k\in \mathbb{N}}$ denote the sequence generated by the algorithm. For any $k \in \mathbb{N}$ we set
\begin{equation} \label{eq:average_iter}
\bar{x}^k = \frac{1}{k} \sum_{n = 0}^k x^n,
\quad \text{and}
\quad
\bar{y}^k = \frac{1}{k} \sum_{n = 0}^k y^n.
\end{equation}
Let $(x,y) \in \mathcal{C} \times \mathcal{K}$. 
Then the following holds:
\begin{enumerate}
\item \label{point:xbar-ybar-sol} The sequence $(\bar{x}^k)_{k\in \mathbb{N}}$ converges to a solution of \eqref{eq:pot_pb_conv} and the sequence $(\bar{y}^k)_{k\in \mathbb{N}}$ converges to a solution of \eqref{Pb:dual}. 
In addition the saddle-point gap is such that 
\begin{equation} 
\label{gap-lag}
\mathcal{L}(\bar{x}^k,y) - \mathcal{L}(x,\bar{y}^k) \leq \frac{1}{k}\left(d_f(x,\bar{x}^k)/\tau + d_g(y,\bar{y}^k))/\sigma - \langle \mathcal{A}(x-x^0), (y-y^0) \rangle \right).
\end{equation}

\item  \label{point:sol-mfg} If $f$ and $g$ are the Euclidean distance $\frac{1}{2}|\cdot |^2$, then the sequence $(x^k)_{k\in \mathbb{N}}$ converges to a solution of \eqref{eq:pot_pb_conv} and the sequence $(y^k)_{k\in \mathbb{N}}$ converges to a solution of \eqref{Pb:dual}.
\end{enumerate}
\end{theorem}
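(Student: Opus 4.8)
The plan is to recognize the saddle-point problem \eqref{eq:duality-chambolle-pock} as an instance of the abstract framework of \cite{chambolle2011first,chambolle2016ergodic} and to transfer their convergence results after checking the required hypotheses. First I would verify the structural assumptions. The map $\mathcal{F}$ is proper, convex and l.s.c.: it is a sum of the perspective terms $\tilde{\bm{\ell}}$, convex and l.s.c.\@ by Lemma \ref{lemma:persp}, and of $\bm{\phi}$ and $\bm{F}$, convex and l.s.c.\@ by Assumption \ref{A:l-convex}. The conjugate $\mathcal{G}^\star$ is convex and l.s.c.\@ (in fact affine, since $\mathcal{G}$ is a sum of indicator functions), and $\mathcal{A}$ is a bounded linear operator. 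Crucially, a saddle point of $\mathcal{L}$ exists: by Theorem \ref{thm:qualif} together with Lemma \ref{lemma:L-L}, both \eqref{Pb:P''} and \eqref{Pb:dual-fenchel} admit solutions with equal values, and any such pair is a saddle point of $\mathcal{L}$.

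For the ergodic statement (point \ref{point:xbar-ybar-sol}), I would derive the per-iteration inequality from the first-order optimality conditions of the two proximal subproblems in \eqref{eq:S-proximal-operator}. Using the $1$-strong convexity of $f$, the $x$-step yields, for every $x$,
\begin{equation*}
\mathcal{F}(x) + \langle x, \mathcal{A}^\star y'\rangle + \tfrac{1}{\tau}d_f(x,x') \geq \mathcal{F}(\hat{x}) + \langle \hat{x}, \mathcal{A}^\star y'\rangle + \tfrac{1}{\tau}d_f(\hat{x},x') + \tfrac{1}{\tau}d_f(x,\hat{x}),
\end{equation*}
and an analogous inequality holds for the $y$-step with $g$ and $d_g$. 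Combining the two, treating the bilinear term $\langle \mathcal{A}\tilde{x}, y\rangle$ via the over-relaxation $\tilde{x} = 2\hat{x}-x'$, and telescoping the Bregman terms over $n=0,\dots,k$ produces the gap bound \eqref{gap-lag} after dividing by $k$ and invoking convexity of $x\mapsto\mathcal{L}(x,y)$ and concavity of $y\mapsto\mathcal{L}(x,y)$ (Jensen applied to the averages). Evaluating \eqref{gap-lag} at a saddle point shows the gap vanishes at rate $1/k$; boundedness of the averaged sequences follows, and any cluster point, having zero gap, is a saddle point, hence a solution pair of \eqref{eq:pot_pb_conv} and \eqref{Pb:dual}.

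For the pointwise statement (point \ref{point:sol-mfg}), with $d_f(x,x')=\tfrac12|x-x'|^2$ and $d_g(y,y')=\tfrac12|y-y'|^2$, I would invoke the analysis of \cite{chambolle2011first}. The key point is that $\tau\sigma\|\mathcal{A}\|^2 < 1$ makes the quadratic form
\begin{equation*}
(x,y) \mapsto \tfrac{1}{\tau}|x|^2 + \tfrac{1}{\sigma}|y|^2 - 2\langle \mathcal{A}x, y\rangle
\end{equation*}
positive definite. In the norm it induces, the iterates $(x^k,y^k)$ are Fejér-monotone with respect to the nonempty set of saddle points, whence the sequence is bounded and consecutive iterates become asymptotically close. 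Passing to a cluster point and using closedness of the saddle-point set identifies the limit as a single saddle point, to which the whole (finite-dimensional) sequence then converges.

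The main obstacle I expect is not the convergence machinery itself, which is standard and can be imported from the two cited papers, but the careful bookkeeping behind point \ref{point:xbar-ybar-sol}: correctly pairing the bilinear coupling term with the over-relaxation step and telescoping the Bregman distances so that exactly the boundary terms $d_f(x,\bar{x}^k)$, $d_g(y,\bar{y}^k)$ and the cross term $\langle \mathcal{A}(x-x^0), y-y^0\rangle$ survive in \eqref{gap-lag}.
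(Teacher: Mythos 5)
Your proposal takes essentially the same route as the paper: the paper's proof simply invokes \cite[Theorem 1, Remark 3]{chambolle2016ergodic} for point \ref{point:xbar-ybar-sol} and \cite[Theorem 1]{chambolle2011first} (with $\theta=1$) for point \ref{point:sol-mfg}, exactly the two results you transfer. Your additional verification of the structural hypotheses and the existence of a saddle point (via Theorem \ref{thm:qualif} and Lemma \ref{lemma:L-L}), together with the sketch of the internal telescoping and Fej\'er arguments, is correct and only makes explicit what the paper leaves implicit.
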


\begin{proof}
Point \ref{point:xbar-ybar-sol} holds as a direct application of \cite[Theorem 1, Remark 3]{chambolle2016ergodic}.
Point \ref{point:sol-mfg} holds as a direct application of \cite[Theorem 1]{chambolle2011first}, applied with $\theta= 1$.
 \end{proof}

\begin{remark}
Fix $(x,y)$, solution to \eqref{eq:duality-chambolle-pock}.
Let $(\hat x,\hat y) \in \mathcal{C} \times \mathcal{K}$ .
Then we have that
$0 \leq \delta(\hat x) := \mathcal{L}(\hat x,y) - \mathcal{L}(x,y)$
and
$0\leq \delta'(\hat y) := \mathcal{L}(x,y) - \mathcal{L}(x,\hat y)$,
with equality if $\hat x$ (resp.\@ $\hat y$) is a primal
(resp.\@ dual) solution. These measures of optimality
(for the saddle-point problem) trivially satisfy
\begin{equation}
0 \leq \delta(\hat x) + \delta'(\hat y) =
\mathcal{L}(\hat{x}^k,y) - \mathcal{L}(x,\hat{y}^k),
\end{equation}
for which an upper-bound is provided by 
\eqref{gap-lag}.
\end{remark}

\begin{lemma} \label{lemma:bounded-linear-operator}
Let $a = \max_{t \in  \mathcal{T}} \bar{\alpha}(t)$. Then $\|\mathcal{A} \| \leq \sqrt{\max \left\{ n + a, 4 \right\}}$,
where $n$ is the cardinal of the set $S$.
\end{lemma}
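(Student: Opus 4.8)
The plan is to bound the operator norm $\|\mathcal{A}\|$ directly from its definition by estimating $\|\mathcal{A}(m_1,w,m_2,D)\|^2$ in terms of $\|(m_1,w,m_2,D)\|^2$. Recall that
\begin{equation*}
\mathcal{A}(m_1,w,m_2,D) = (\bm{S}w-m_1,\, m_1-m_2,\, \bm{A}w-D),
\end{equation*}
so the norm of the image decomposes as a sum of three squared-norm blocks. First I would expand each block using the elementary inequality $|a+b|^2 \leq 2|a|^2 + 2|b|^2$, giving
\begin{equation*}
\|\mathcal{A}(m_1,w,m_2,D)\|^2 \leq 2\|\bm{S}w\|^2 + 2\|m_1\|^2 + 2\|m_1\|^2 + 2\|m_2\|^2 + 2\|\bm{A}w\|^2 + 2\|D\|^2,
\end{equation*}
and then compare the right-hand side to $\|(m_1,w,m_2,D)\|^2 = \|m_1\|^2 + \|w\|^2 + \|m_2\|^2 + \|D\|^2$. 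This naive split, however, introduces factors of $2$ that I expect to be too lossy to reach the clean bound $\sqrt{\max\{n+a,4\}}$; the main work will be to avoid them by grouping terms more cleverly.

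The sharper approach is to exploit the block structure of $\mathcal{A}$ seen as a linear map and estimate $\|\mathcal{A}\|^2$ as the largest eigenvalue of $\mathcal{A}^\star \mathcal{A}$, or equivalently to bound the norms of the individual operators $\bm{S}$ and $\bm{A}$ and assemble them. The key quantities are $\|\bm{S}\|$ and $\|\bm{A}\|$. For $\bm{A}$, from $\bm{A}[w](t) = \sum_{(x,y)} w(t,x,y)\alpha(t,x,y)$ the Cauchy--Schwarz inequality gives $|\bm{A}[w](t)|^2 \leq \bar\alpha(t)\sum_{(x,y)}w(t,x,y)^2$, so $\|\bm{A}\|^2 \leq \max_t \bar\alpha(t) = a$. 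For $\bm{S}$, from $\bm{S}[w](s,x) = \sum_{y} w(s-1,y,x)$ the same inequality with $n = |S|$ terms yields $\|\bm{S}\|^2 \leq n$. I would then treat the three output blocks separately: the block $\bm{S}w - m_1$ couples $w$ (through $\bm{S}$) and $m_1$; the block $m_1 - m_2$ couples $m_1$ and $m_2$; and the block $\bm{A}w - D$ couples $w$ (through $\bm{A}$) and $D$. Because $w$ feeds into two blocks through the \emph{orthogonal} operators $\bm{S}$ and $\bm{A}$ (they act on disjoint aggregations of $w$), and $m_1$ feeds into two blocks, the right way to proceed is to write $\mathcal{A}^\star\mathcal{A}$ explicitly and read off a block-diagonal-plus-coupling structure.

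Concretely, I would compute $\mathcal{A}^\star\mathcal{A}$ using the adjoint identity $\mathcal{A}^\star(u,\gamma,P) = (\gamma-u,\, \bm{A}^\star P + \bm{S}^\star u,\, -\gamma,\, -P)$ already established before Theorem~\ref{thm:qualif}, and bound its spectral radius. The variable $m_2$ only appears in $m_1-m_2$, the variable $D$ only in $\bm{A}w-D$, and $u$-type couplings separate cleanly; the two genuinely coupled pairs are $(m_1$ versus $m_2)$ contributing a $2\times 2$ block with eigenvalues bounded by the largest of the contributions, and $(w$ versus $m_1,D)$. Estimating these $2\times 2$ (or small) blocks reduces the bound to comparing $n + a$ (the combined contribution of $\bm{S}$ and $\bm{A}$ acting on $w$, whose squared norms \emph{add} since they hit orthogonal components) against the constant $4$ coming from the purely $m_1,m_2$-type couplings where the relevant $2\times 2$ matrix $\begin{psmallmatrix}2 & -1\\ -1 & 1\end{psmallmatrix}$-type block has operator norm at most $4$. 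Taking the maximum over these blocks gives $\|\mathcal{A}\|^2 \leq \max\{n+a,4\}$, hence the claimed bound.

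The step I expect to be the main obstacle is the careful bookkeeping of how $w$, $m_1$, $m_2$, and $D$ are shared across the three output blocks without double-counting: the factor $\sqrt{n+a}$ rather than $\sqrt{2n+2a}$ or similar requires recognizing that $\bm{S}$ and $\bm{A}$ hit \emph{orthogonal} aggregates of $w$ so their contributions add at the level of squared norms in a single block, and that the constant $4$ is genuinely governed by the small matrix coupling $m_1$ to both $m_1-m_2$ and $\bm{S}w-m_1$. Rather than diagonalizing $\mathcal{A}^\star\mathcal{A}$ fully, the cleanest route is to bound the Rayleigh quotient $\|\mathcal{A}x\|^2/\|x\|^2$ directly by completing squares separately in the $(w,D)$-part and the $(m_1,m_2)$-part, which isolates the two competing constants $n+a$ and $4$ and produces their maximum.
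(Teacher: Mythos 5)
Your two preliminary estimates, $\|\bm{A}\|^2\le a$ and $\|\bm{S}\|^2\le n$ by Cauchy--Schwarz, are exactly the ones the paper uses. But the refinement you build on top of them is never carried out, and it is precisely the step that cannot be made to work. The obstruction is the cross term $-2\langle \bm{S}w,m_1\rangle$ in $|\bm{S}w-m_1|^2$ (and likewise $-2\langle \bm{A}w,D\rangle$): to absorb it into a bound of the form $c_w|w|^2+c_1|m_1|^2$ you must invoke Young's inequality $2|\langle\bm{S}w,m_1\rangle|\le \epsilon\|\bm{S}\|^2|w|^2+\epsilon^{-1}|m_1|^2$ for some $\epsilon>0$, which pushes the coefficient of $|w|^2$ strictly above $n+a$. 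The orthogonality you invoke does not rescue this: $\bm{S}$ and $\bm{A}$ land in orthogonal output blocks, so $|\bm{S}w|^2+|\bm{A}w|^2\le(n+a)|w|^2$ is fine, but both cross terms load onto the same input $|w|^2$. Concretely, take $\bm{A}=0$, $D=m_2=0$, $w$ a maximizer of $|\bm{S}w|/|w|$ with $|w|=1$, and $m_1=-t\,\bm{S}w/|\bm{S}w|$; the Rayleigh quotient is then $(n+2\sqrt{n}\,t+2t^2)/(1+t^2)$, which exceeds $n$ for small $t>0$. So the bound $\|\mathcal{A}\|^2\le\max\{n+a,4\}$ is not reachable by any completing-the-squares or block-spectral argument of the kind you sketch, and the ``bookkeeping'' you flag as the main obstacle is a genuine obstruction rather than a technicality.

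You should also know that the paper's own proof is exactly the ``naive'' split you set aside: it writes $|\bm{S}w-m_1|^2+|m_1-m_2|^2+|\bm{A}w-D|^2\le(\|\bm{A}\|+\|\bm{S}\|)\,|w|^2+4|m_1|^2+2|m_2|^2+2|D|^2$ and concludes with $\|\bm{A}\|\le a$, $\|\bm{S}\|\le n$. Carried out carefully, $|x-y|^2\le 2|x|^2+2|y|^2$ gives the coefficient $2(\|\bm{S}\|^2+\|\bm{A}\|^2)\le 2(n+a)$ in front of $|w|^2$, i.e.\ the bound $\sqrt{\max\{2(n+a),4\}}$; the stated constant drops a factor of $2$. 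Your instinct that the naive estimate only delivers $2(n+a)$ is therefore correct, and the honest resolution is to prove the lemma with that larger constant, which is all that is needed to choose step sizes satisfying $\tau\sigma\|\mathcal{A}\|^2<1$ in Theorem \ref{theo:chamboll_pock}, rather than to chase the sharper constant.
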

\begin{proof}
For any $(m_1,w,m_2,D)\in \mathcal{C}$, we have
\begin{align} \nonumber
|\mathcal{A}(m_1,w,m_2,D)|^2 & \leq  |\bm{S}w-m_1|^2 + |m_1-m_2|^2 + |\bm{A}w-D|^2 \\
& \leq (\|\bm{A}\| + \|\bm{S}\|) |w|^2 + 4 |m_1|^2 + 2 |m_2|^2  + 2 |D|^2. \nonumber
\end{align}
We have $\|\bm{A}\| \leq a$ and  $\|\bm{S}\| \leq n$, which concludes the proof.
 \end{proof}

\subsubsection{Euclidean distance \label{sec:euclidean-distance}}

Now we explicit the update rule \eqref{eq:S-proximal-operator} in the case where $f$ and $g$ are both equal to the Euclidean distance $\frac{1}{2}|\cdot |^2$ and defined on $\mathcal{C}$ and $\mathcal{K}$ respectively.
In this situation (i,\ref{eq:S-proximal-operator}) and (iii,\ref{eq:S-proximal-operator}) can be expressed via proximal operators:
\begin{equation} \label{eq:euclidean-S-prox}
\begin{cases}  {\normalfont \text{(i)}} & \hat{x} =  \prox_{\tau \mathcal{F}}(x' - \tau \mathcal{A}^\star y'), \\
{\normalfont \text{(iii)}} &  \hat{y} =  \prox_{\sigma \mathcal{G}^\star}(y' + \tau \mathcal{A} \tilde{x}). \\
\end{cases}
\end{equation}
Now we detail the computation of the proximal steps in the above algorithm.

\paragraph{Primal step.}
For any $x = (x_1,x_2,x_3,x_4) \in \mathcal{C}$, we have by Moreau's identity
\begin{equation} \nonumber
\prox_{\tau \mathcal{F}}(x) = x - \tau\prox_{\mathcal{F}^\star/\tau}(x/\tau).
\end{equation}
As a consequence of \eqref{eq:fenchel_F}, the proximal operator of $\mathcal{F}^\star$ is given by
\begin{equation} \nonumber  \prox_{\mathcal{F}^\star}(x)  =  \argmin_{x' \in \mathcal{C}} \frac{1}{2} | x - x' |^2 + \chi_{Q}(x_1',x_2') + \sum_{s \in \bar{\mathcal{T}}} F^\star(s,x_3'(s)) + \sum_{t \in \mathcal{T}} \phi^\star(t,x_4'(t)).
\end{equation}
 Then (i,\ref{eq:euclidean-S-prox}) is given by
\begin{align}\nonumber
 (\hat{m}_1,\hat{w})  = \ & (m_1' - \tau (\gamma' - u'),w' - \tau (\bm{A}^\star P' + \bm{S}^\star u')) \\
  & - \tau \proj_{Q}(m_1'/\tau - \gamma' + u'),w'/\tau - \bm{A}^\star P' - \bm{S}^\star u'), \label{eq:proj-details}
\end{align}
and for any $(t,s) \in \mathcal{T} \times \bar{\mathcal{T}}$,
\begin{equation} \label{eq:m_2-details}
\begin{array}{rl}
 \hat{m}_2(s) = &  m_2'(s) + \tau \gamma'(s) - \tau \prox_{F^\star(s)/\tau}(m_2'(s)/\tau + \gamma'(s)), \\
 \hat{D}(t) = &  D'(t) + \tau P'(t) - \tau \prox_{\phi^\star(t)/\tau}(D'(t)/\tau + P'(t)). 
 \end{array}
\end{equation}

\paragraph{Dual step.}
It follows from \eqref{eq:F} that
$\prox_{\sigma \mathcal{G}^\star}(y_1,y_2,y_3)
= (y_1 + \sigma \bar{m}_0, y_2 ,y_3).$
Then (iii,\ref{eq:euclidean-S-prox}) is given by
\begin{equation*}
\hat{u} = u' + \sigma(\bm{S}\tilde{w} - \tilde{m}_1 + \bar{m}_0), \quad
\hat{\gamma} = \gamma' + \sigma(\tilde{m}_1 - \tilde{m}_2), \quad 
\hat{P}  = P' + \sigma (\bm{A} \tilde{w} - \tilde{D}).
\end{equation*}

\begin{remark}
An alternative formulation of the primal problem (avoiding the decoupling $m_1$ and $m_2$) is as follows:
\begin{equation} \nonumber
\inf_{(m,w) \in \mathcal{R}} \bar{\mathcal{F}}(m,w)+\bar{\mathcal{G}}(\bar{\mathcal{A}}(m,w)),
\end{equation}
where
\begin{equation}  \nonumber
    \left\{ \begin{array}{rl}
        \bar{\mathcal{F}} \colon & (m,w) \mapsto \sum_{(t,x)} \tilde{\bm{\ell}}[m,w](t,x) + \sum_{s} \bm{F}[m](s), \\
        \bar{\mathcal{G}} \colon & (y,D) \mapsto  \chi_{\bar{m}_0}(y) + \phi(D),  \\
        \bar{\mathcal{A}} \colon & (m,w) \mapsto (\bm{S}w - m, \bm{A} w).
    \end{array} \right.
\end{equation}
This formulation may have numerical advantages since the operator
$\bar{\mathcal{A}}$ has a smaller norm than $\mathcal{A}$. In full generality, there is however no analytic form for the proximal operator of the $\bar{\mathcal{F}}$ that would be based on the proximal operators of $\tilde{\ell}(t,\cdot)$ and $F(t,\cdot)$. Therefore we do not explore any further this formulation.
\end{remark}

\subsubsection{Kullback-Leibler divergence} \label{sec:kl}

In this section we slightly modify the Euclidean framework above. Instead of considering a Euclidean distance $d_f$ in (i,\ref{eq:S-proximal-operator}), we consider an entropy based Bregman distance called Kullback-Leibler divergence.  Let us define
\begin{align*}
\mathcal{C}_1= & \ \Big\{
(m_1,w,m_2,D) \in \mathcal{C} \,|\,  m_1(t,x),w(t,x,y) \in (0,1], \, (t,x,y) \in \mathcal{T} \times S \times S \Big\}, \\
\mathcal{C}_2= & \ \Big\{
(m_1,w,m_2,D) \in \mathcal{C} \,|\,  m_1(t,x),w(t,x,y) \in (0,2), \, (t,x,y) \in \mathcal{T} \times S \times S \Big\}.
\end{align*}
For any $(m_1,w,m_2,D) \in \bar{\mathcal{C}}_1$, we define
\begin{align} \notag
f(m_1,w,m_2,D) = & \sum_{(s,x)  \in \bar{\mathcal{T}} \times S} m_1(s,x) \ln(m_1(s,x)) \\
& \quad +  \sum_{(t,x,y) \in \mathcal{T} \times S^2} w(t,x,y) \ln(w(t,x,y)) + \frac{1}{2}|(m_2,D) |^2, \label{eq:f_kullback}
\end{align}
 with the convention that $0 \ln (0) = 0$.
Then, given $(m_1,w,m_2,D) \in \mathcal{C}_1$ and $(m_1',w',m_2',D') \in \mathcal{C}_1$, we have
\begin{align} \nonumber
d_f((m_1,w,m_2,D),(m_1',w',m_2',D')) = & \ d_{KL}((m_1,w),(m_1',w'))  \\ & \quad + \frac{1}{2}|(m_2,D) -  (m_2',D')|^2, \nonumber
\end{align}
where
\begin{align} \nonumber
d_{KL}((m_1,w),(m_1',w')) = \sum_{(s,x)  \in \bar{\mathcal{T}} \times S}  m_1(s,x) (\ln(m_1(s,x)/m_1'(s,x)) - 1) \\ +  \sum_{(t,x,y) \in \mathcal{T} \times S^2} w(t,x,y)( \ln(w(t,x,y)/w'(t,x,y)) - 1).
\end{align}

As can be easily verified, the map $f$ is $1$-strongly convex on $\bar{\mathcal{C}}_1$. The domain of $\mathcal{F}$ is not contained in $\bar{\mathcal{C}}_1$ in general (as required by Theorem \ref{theo:chamboll_pock}), however $f$ is not $1$-strongly convex on $\mathcal{C}$. This is a minor issue, since any solution to \eqref{eq:duality-chambolle-pock} lies in $\bar{\mathcal{C}}_1$, thus we can replace $\mathcal{F}$ by $\mathcal{F} + \chi_{\bar{\mathcal{C}}_1}$ without modifying the solution set to the problem.

%

Compared to the Subsection \ref{sec:euclidean-distance}, the computations of
\eqref{eq:m_2-details} still hold. The projection step \eqref{eq:proj-details} is now replaced by
\begin{align} \nonumber
 (\hat{m}_1,\hat{w})  = & \argmin_{(m_1,w) \in \mathcal{R}} \sum_{(t,x) \in \mathcal{T} \times S}  \tilde{\bm{\ell}}[m_1,w](t,x) +\langle m_1,\gamma'-u' \rangle +   \langle w ,\bm{A}^\star P' + \bm{S}^\star u' \rangle  \\
 & \quad + \frac{1}{\tau} d_{KL}((m_1,w),(m_1',w'))
 + \sum_{(t,x) \in \bar{\mathcal{T}}\times S} \chi_{\R^-} \big( m_1(t,x)-1 \big). \label{eq:m1hat-what}
\end{align}

Note that it is not necessary to explicit the constraint $w(t,x,y) \leq 1$ in the above problem; it is satisfied as a consequence of
Assumption \ref{A:l-convex} and Lemma \ref{lemma:persp}.

In general, the computation of this proximal operator can be difficult. In Section \ref{sec:numerics} we consider a linear running cost and explain (in Appendix \ref{Appendix}) how to solve explicitly problem \eqref{eq:m1hat-what} in this specific case.

\subsection{ADMM and ADM-G \label{sec:ADM}}

We now present ADMM and ADM-G.
Introducing the variables
\begin{equation}
(a,b)=(u-\gamma,-\bm{A}^\star P - \bm{S}^\star u)
\end{equation}
and recalling the definition of $Q$ and $\mathcal{F}^*$ (see the proof of Theorem \ref{thm:qualif}), the problem \eqref{Pb:dual} can be written as follows:
\begin{equation} \label{pb:ADMM}  \begin{array}{c}
 \displaystyle \sup_{\begin{subarray}{c}
 (u,\gamma,P) \in \mathcal{K}, \; (a,b) \in Q
\end{subarray}}  \mathcal{D}(u,\gamma,P) \\
 \text{s.t.: }  \begin{cases}
 \begin{array}{ll}
u(s,x) - \gamma(s,x) = a(s,x)  & (s,x) \in \bar{\mathcal{T}} \times S,\\
-\alpha(t,x,y)P(t) -u(t+1,y) = b(t,x,y)  & (t,x,y) \in \mathcal{T} \times S^2.
\end{array}
\end{cases}
\end{array}
\end{equation}

\begin{remark}
Let $D_t $ and $D_x$ be finite difference operators defined for any $(t,x,y) \in \mathcal{T} \times S \times S$ by
\begin{equation*} 
\begin{array}{rl}  D_t [u] (t,x)  & = \begin{cases} u(t+1,x) - u(t,x) & \text{\normalfont if } t < T, \\
-u(T,x) & \text{\normalfont if } t = T, \end{cases}
 \\[2em]
 D_x [u](t,x,y)  &= u(t+1,x) - u(t+1,y).
\end{array}
 \end{equation*}
Since $\dom(\ell(t,x,\cdot)) \subseteq \Delta(S)$, for any $(u,b) \in \mathcal{R}$  we have that
\begin{equation} \nonumber
\bm{\ell}^\star[b + \bm{S}^\star u](t,x)  = \bm{\ell}^\star[b + D_x u ](t,x) - u(t+1,x),
\end{equation}
for any $(t,x) \in \mathcal{T} \times S$. Then we have that $(a,b) \in Q$ if and only if $(\tilde{a},\tilde{b}) \in Q$, where
\begin{equation*} \tilde{a}(t,x) = a(t,x) - u(t+1,x), \qquad  \tilde{b}(t,x,y) = b(t,x,y) + u(t+1,x).
\end{equation*}
Thus the problem \eqref{pb:ADMM} can be alternatively written 
\begin{equation*} \begin{array}{c}
 \displaystyle  \sup_{\begin{subarray}{c}
 (u,\gamma,P) \in \mathcal{K}, \;  (\tilde{a},\tilde{b})  \in Q
\end{subarray}}  \mathcal{D}(u,\gamma,P), \quad
 \text{\normalfont subject to: }  \begin{cases}
D_t u = - \gamma - \tilde{a}, \\
D_x u= A^\star P + \tilde{b}.
\end{cases}
\end{array}
\end{equation*}
This problem is close to the problem studied in \cite[Section 4]{benamou2000computational} in the context of optimal transport theory.
\end{remark}

Let $r>0$.
The Lagrangian and augmented Lagrangian associated with problem \eqref{pb:ADMM} are defined by
\begin{align}
\mathcal{L} =  & \ \mathcal{D}(u,\gamma,P) - \chi_{Q}(a,b) + \langle m, u- \gamma - a\rangle + \langle w,-\bm{A}^\star P -\bm{S}^\star u- b\rangle \label{eq:def_lag} \\
\mathcal{L}_r = & \ \mathcal{L}(u,\gamma,P,a,b,m,w) + \frac{r}{2}| (u- \gamma- \bar{a},-\bm{A}^\star P -\bm{S}^\star u - b) |^2, \notag
\end{align}
when evaluated at $(u,\gamma,P,a,b,m,w)$. Note that their definition is different from the one introduced in \eqref{eq:duality-chambolle-pock}.
We define an ADMM step which consists in the updates of $u$, $(\gamma,P)$ and $(a,b)$ via three successive minimization steps and in the update of $(m,w)$ via a gradient ascent step of the augmented Lagrangian:
\\[1em]
\fbox{\parbox{
\dimexpr\linewidth-2
\fboxsep-2
\fboxrule
}{
\begin{algorithmic}
\STATE Iteration $(\hat{u},\hat{\gamma},\hat{P},\hat{a},\hat{b},\hat{m},\hat{w}) = L_r(u,\gamma,P,a,b,m,w)$,
\begin{equation} \label{step:Lr}
\begin{cases}  {\normalfont \text{(i)}} & \hat{u} \in \argmin_{u \in \mathbb{R}(\bar{\mathcal{T}} \times S)}  \mathcal{L}_r(u,\gamma,P,a,b,m,w),\\
{\normalfont \text{(ii)}} & (\hat{\gamma}, \hat{P}) \in \argmin_{(\gamma,P) \in \mathbb{R}(\bar{\mathcal{T}} \times S) \times \mathbb{R}(\mathcal{T})} \mathcal{L}_r(\hat{u},\gamma,P,a,b,m,w),\\
{\normalfont \text{(iii)}} & (\hat{a},\hat{b}) \in \argmin_{(a,b) \in Q } \mathcal{L}_r(\hat{u},\hat{\gamma},\hat{P},a,b,m,w),\\
{\normalfont \text{(iv)}} & (\hat{m},\hat{w}) = (m,w) + r(\hat{u}-\hat{\gamma}-\hat{a}, - \bm{A}^\star[\hat{P}] - \bm{S}^\star[\hat{u}] - \hat{b}).
\end{cases}
\end{equation}
\end{algorithmic}}}

\subsubsection{ADMM}

The ADMM method is given by Algorithm \ref{Alg:ADMM}.
\begin{algorithm}[H] 
\caption{ADMM  \label{Alg:ADMM}}
\begin{algorithmic}
\STATE Choose $r>0$, $(m^0,w^0) \in \mathcal{R}$, $(u^0,\gamma^0,P^0) \in \mathcal{K}$, $(a^0,b^0) \in Q$ . \\ Let $v^0 = (u,\gamma,P,a,b,m,w)$.
\FOR{$0\leq k < N$}
\STATE ADMM step: $v^{k+1} = L_r(v^k)$,
\ENDFOR
\RETURN {$v^N$.}
\end{algorithmic}
\end{algorithm}
Unlike in \cite{benamou2000computational} this algorithm does not reduce to ALG2, thus we have no theoretical guarantee about the convergence. But as we will see in subsection \ref{section:ADMG}, convergence results are available for ADM-G. The relation (\ref{step:Lr},i) is given by
\begin{equation} \nonumber
u^{k+1} = - (m^k - \bar{m}_0 - \bm{S} w^k)/r + \gamma^k + a^k - \bm{S}[\bm{A}^\star P^{k+1} + b^k].
\end{equation}
The relation (\ref{step:Lr},ii) can be written under a proximal form,
\begin{align*}
\gamma^{k+1}(s) & = \text{prox}_{\bm{F}^\star(s)/r}\left(m^k(s)/r + u^{k+1}(s) - \bar{a}^k(s)\right),
\\[1em]
P^{k+1}(t) &= \text{prox}_{\bm{\phi}^\star(t)/(r\bar{\alpha}(t))} \left(\bm{A}[ w^k/r - \bm{S}^\star u^k - b^k](t)/\bar{\alpha}(t)\right),
\end{align*}
for any $(t,s) \in \mathcal{T} \times  \bar{\mathcal{T}}$. The relation (\ref{step:Lr}, iii) can be written as a projection step
\begin{equation} \label{eq:projection} 
(a^{k+1},b^{k+1})  = \proj_{Q} \big((m^k/r + u^{k+1} - \gamma^{k+1}, w^k/r - \bm{A}^\star P^{k+1} - \bm{S}^\star u^{k+1})\big).
\end{equation}

\subsubsection{ADM-G \label{section:ADMG}}

We explicit now the implementation of the ADM-G algorithm introduced in \cite{he2012alternating}. To fit their framework we define
\begin{equation}
A_1 = \begin{pmatrix} id \\ - \bm{S}^\star \end{pmatrix}, \quad A_2 = \begin{pmatrix} -id & 0 \\ 0 & - \bm{A}^\star \end{pmatrix}, \quad A_3 = \begin{pmatrix} -id & 0 \\ 0 & -id\end{pmatrix}, \nonumber
\end{equation}
with appropriate dimensions, so that the constraint of problem \eqref{pb:ADMM} writes
$A_1 u + A_2 (\gamma,P)+ A_3 (a,b) = 0$.
We define
\begin{equation} \nonumber
M = \begin{pmatrix} r A_2^\star A_2 & 0 & 0 \\ r A_3^\star A_2  & r A_3^\star A_3 & 0 \\ 0 & 0 & id/r \end{pmatrix}, \quad H = \begin{pmatrix} r A_2^\star A_2 & 0 & 0 \\ 0  & r A_3^\star A_3 & 0 \\ 0 & 0 & id/r \end{pmatrix}.
\end{equation}
Then we have
\begin{equation} \nonumber
 (M^\star H^{-1})^{-1} =  \begin{pmatrix} id & -(A_2^{\star} A_2)^{-1} A_2 A_3 & 0 \\ 0 & id & 0 \\ 0 & 0 & id \end{pmatrix}.
\end{equation}

\begin{algorithm}[H] 
\caption{ADM-G  \label{Alg:ADMG}}
\begin{algorithmic}
\STATE Choose $r>0$ and $\xi \in (0,1)$. Let $(m^0,w^0) \in \mathcal{R}$, $(u^0,\gamma^0,P^0) \in \mathcal{K}$, $(a^0,b^0) \in Q$ . \\ Let $v^0 = (\gamma,P,a,b,m,w)$.
\FOR{$0\leq k < N$}
\STATE ADMM step: $(\tilde{u}^{k+1},\tilde{v}^{k+1}) = L_r(u^k,v^k)$, \phantom{$\Big|_\|$}
\STATE
Substitution step:
$\begin{cases}
\begin{array}{rl}
v^{k+1} = &   v^k + \xi(M^\star H^{-1})^{-1} (\tilde{v}^k - v^k), \\
u^{k+1} = &   \tilde{u}^{k+1},
\end{array}
\end{cases}$
\ENDFOR
\RETURN {$(u^N,v^N)$.}
\end{algorithmic}
\end{algorithm}

\begin{theorem} \label{thm:ADMG}
Let $(u^k,\gamma^k,P^k,a^k,b^k,m_1^k,w^k)_{k\in \mathbb{N}}$ be the sequence generated by Algorithm \ref{Alg:ADMG}, and let 
$m_2^k = m_1^k$,
$D^k = \bm{A}w^k$,
for any $k\in \mathbb{N}$. Then the sequence $(m_1^k,w^k,m_2^k,D^k)_{k\in \mathbb{N}}$ converges to a solution of \eqref{eq:pot_pb_conv} and the sequence $(u^k,\gamma^k,P^k)_{k\in \mathbb{N}}$ converges to a solution of \eqref{Pb:dual}. 
\end{theorem}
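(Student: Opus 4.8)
The plan is to identify problem \eqref{pb:ADMM} with the three-block separable convex program studied by He, Tao and Yuan in \cite{he2012alternating}, to invoke their convergence theorem, and then to transport the conclusion back to \eqref{Pb:dual} and \eqref{eq:pot_pb_conv} through the duality of Section \ref{sec:Duality}. First I would write \eqref{pb:ADMM}, up to a sign so as to minimize $-\mathcal{D}$, in the canonical form
\[
\min_{u,\,(\gamma,P),\,(a,b)} \ \theta_1(u) + \theta_2(\gamma,P) + \theta_3(a,b), \qquad A_1 u + A_2(\gamma,P) + A_3(a,b) = 0,
\]
with $\theta_1(u) = -\langle m_0,u(0,\cdot)\rangle$, $\theta_2(\gamma,P) = \sum_{s \in \bar{\mathcal{T}}} \bm{F}^\star[\gamma](s) + \sum_{t \in \mathcal{T}} \bm{\phi}^\star[P](t)$, $\theta_3 = \chi_Q$, and with $A_1,A_2,A_3$ the matrices introduced before the statement. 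The three functions are proper, convex and l.s.c.\@ (closedness and convexity of $Q$ follow from Lemma \ref{lemma:persp}), and the Lagrange multiplier of the linear constraint is precisely the pair $(m,w)$ appearing in the augmented Lagrangian \eqref{eq:def_lag}. One then checks that the iteration $L_r$ of \eqref{step:Lr} reproduces the three successive block minimizations and the dual ascent of the ADMM step of \cite{he2012alternating}, while the substitution step of Algorithm \ref{Alg:ADMG} is their Gaussian back substitution with correction operator $(M^\star H^{-1})^{-1}$.

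Second, I would verify the hypotheses of the convergence theorem of \cite{he2012alternating}. Besides the convexity of the $\theta_i$, the construction of $M$, $H$ and of $(M^\star H^{-1})^{-1}$ requires $A_2^\star A_2$ and $A_3^\star A_3$ to be invertible, and the well-posedness of the $u$-subproblem requires $A_1^\star A_1 = \mathrm{id} + \bm{S}\bm{S}^\star$ to be positive definite, which holds. Here $A_3^\star A_3 = \mathrm{id}$, while $A_2^\star A_2$ is block diagonal with blocks $\mathrm{id}$ and $\bm{A}\bm{A}^\star$; since $\bm{A}\bm{A}^\star[P](t) = \bar{\alpha}(t) P(t)$, its invertibility amounts to $\bar{\alpha}(t) > 0$ for every $t \in \mathcal{T}$, which holds whenever the displacement cost does not vanish identically at any time step. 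The existence of a saddle point of the Lagrangian, the other standing requirement, is guaranteed by Theorem \ref{thm:qualif} together with Lemma \ref{lemma:complementary-condition}, which produce a primal-dual optimal pair. Under these conditions the cited theorem yields convergence of the whole sequence $(u^k,\gamma^k,P^k,a^k,b^k,m^k,w^k)$ to a KKT point of \eqref{pb:ADMM}, for any $\xi \in (0,1)$.

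Finally I would translate this convergence into the statement. The limit of $(u^k,\gamma^k,P^k)$, being the primal part of the KKT point of \eqref{pb:ADMM}, is a solution of \eqref{Pb:dual}; the limit of the multiplier $(m^k,w^k) = (m_1^k,w^k)$ is the corresponding dual optimum, which by strong duality (Theorem \ref{thm:qualif}) is a solution $(m_1,w)$ of \eqref{eq:pot_pb_conv}. Setting $m_2^k = m_1^k$ and $D^k = \bm{A}w^k$ and passing to the limit, the complementarity conditions (C1--C7) of Lemma \ref{lemma:complementary-condition} hold at the limit, so $(m_1^k,w^k,m_2^k,D^k)$ converges to a solution of \eqref{Pb:P''}, equivalently of \eqref{eq:pot_pb_conv}. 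The main obstacle is this verification step: matching the abstract matrices $M$, $H$ and the full set of convergence hypotheses of \cite{he2012alternating} exactly to the present block structure, and in particular securing the invertibility of $A_2^\star A_2$.
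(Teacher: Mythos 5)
Your proposal is correct and follows essentially the same route as the paper, which simply invokes \cite[Theorem 4.7]{he2012alternating} to get convergence of the full sequence to a saddle point of the Lagrangian \eqref{eq:def_lag} and then reads off the conclusion via the equivalence of \eqref{Pb:P''} with \eqref{eq:pot_pb_conv} and of \eqref{Pb:dual-fenchel} with \eqref{Pb:dual}. Your additional verification of the hypotheses (convexity of the three blocks, invertibility of $A_2^\star A_2$, i.e.\@ $\bar{\alpha}(t)>0$, and existence of a saddle point via Theorem \ref{thm:qualif}) is a useful elaboration that the paper leaves implicit.
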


\begin{proof}
By \cite[Theorem 4.7]{he2012alternating} we have that $(u^k,\gamma^k,P^k,a^k,b^k,m_1^k,w^k)_{k\in \mathbb{N}}$ converges to a saddle-point of the Lagrangian \eqref{eq:def_lag}. Thus by definition of $(m_2^k,D^k)$, the sequence $(m_1^k,w^k,m_2^k,D^k)_{k\in \mathbb{N}}$ converges to a solution of \eqref{Pb:P''} and the sequence $(u^k,\gamma^k,P^k)_{k\in \mathbb{N}}$ converges to a solution of \eqref{Pb:dual-fenchel}.
 \end{proof}

\begin{remark}
 In our case the first equality of the Gaussian back substitution step in Algorithm \eqref{Alg:ADMG} can be written
\begin{align}\nonumber
v^{k+1} & = v^k + \xi (M^\star H^{-1})^{-1} (\tilde{v}^k - v^k) \\ \nonumber
& = (\tilde{\gamma}^k - \xi (\tilde{a}^k - a^k), \tilde{P}^k - \xi (\bm{A} \bm{A}^\star)^{-1}\bm{A}^\star (\tilde{b}^k - b^k), \tilde{a}^k, \tilde{b}^k, \tilde{m}^k, \tilde{w}^k).
\end{align}
The Gaussian back substitution step is thus given by
\begin{align}  \label{eq:modif-gamma-ADMG}
\gamma^{k+1} &= \tilde{\gamma}^k - \xi (\tilde{a}^k - a^k),\\ \label{eq:modif-P-ADMG}
P^{k+1} &= \tilde{P}^k - \xi (\bm{A} \bm{A}^\star)^{-1}\bm{A}^\star (\tilde{b}^k - b^k),\\ \nonumber
(u^{k+1},a^{k+1},b^{k+1},m^{k+1},w^{k+1}) &= (\tilde{u}^{k},\tilde{a}^{k},\tilde{b}^{k},\tilde{m}^{k},\tilde{w}^{k}),
\end{align}
where $(\bm{A} \bm{A}^\star)^{-1} P(t) = P(t)/ \bar{\alpha}(t)$ for any $t \in \mathcal{T}$. Then the differences between ADM-G and ADMM can be summarized by the two corrections \eqref{eq:modif-gamma-ADMG} and \eqref{eq:modif-P-ADMG}.
\end{remark}

\subsection{Residuals \label{sec:errors}}

Let $(m_1^k,w^k,m_2^k,D^k)_{k\in \mathbb{N}}$ and $(u^k,\gamma^k,P^k)_{k\in \mathbb{N}}$ denote the two sequences generated by a numerical method.
Let us consider
\begin{equation}
\hat{u}^k= \bm{U}[\gamma^k,P^k]
\quad
\text{and}
\quad
\pi^k \in \bm{\pi}[m_1^k,w^k,\hat{u}^k,\gamma^k,P^k].
\end{equation}
It was shown in Proposition \ref{potential-to-mfg} that if for some $k \in \mathbb{N}$, $(m_1^k,w^k,m_2^k,D^k)$ and $(u^k,\gamma^k,P^k)$ are solutions to \eqref{Pb:P''} and \eqref{Pb:dual-fenchel}, then
$(m_1^k,\pi^k,\hat{u}^k,\gamma^k,P^k)$ is a solution to \eqref{eq:MFG}.

Therefore, we look the sequence $(m_1^k,\pi^k,\hat{u}^k,\gamma^k,P^k)_{k \in \mathbb{N}}$ as a sequence of approximate solutions to \eqref{eq:MFG}.
Note that (\ref{eq:MFG},i) is exactly satisfied, by construction. We consider the residuals  $(\varepsilon_m,\varepsilon_\pi, \varepsilon_\gamma, \varepsilon_P) \in \mathcal{R} \times \mathcal{U}$ defined as follows, in order to measure the satisfaction of the remaining relations in the coupled system:
\begin{equation} \nonumber
\begin{cases}
\begin{array}{rl}
\varepsilon_\pi(t,x) = &    (\bm{\ell}[\pi] +  \bm{\ell}^\star[-\bm{A}^\star P - \bm{S}^\star \hat{u}])(t,x) - \langle \pi(t,x),(\bm{A}^\star P + \bm{S}^\star \hat{u})(t,x) \rangle,\\[.5em]
 \varepsilon_m(s,x) = &     m^\pi(s,x) - m(s,x), \\[.5em]
  \varepsilon_\gamma(s)  = &    m(s) - \proj_{\partial \bm{F}^\star[\gamma](s)}(m(s)),
\\[.5em]
\varepsilon_P(t)  = &     Q[m,\pi](t) - \proj_{ \partial \bm{\phi}^\star[P](t)}(Q[m,\pi](t)\Delta_x),
\end{array}
 \end{cases}
\end{equation}
for all $(t,s,x) \in \mathcal{T} \times \bar{\mathcal{T}}\times S$. If the residuals are null, then $(m_1^k,\pi^k,\hat{u}^k,\gamma^k,P^k)$ is a solution to \eqref{eq:MFG}. The errors are then defined as the norms of $\varepsilon_\pi$, $\varepsilon_m$, $\varepsilon_\gamma$, and $\varepsilon_P$.

\begin{remark}
 In the following numerical section \ref{sec:numerical}, we plot residuals for ADMM, ADMG, Chambolle-Pock and Chambolle-Pock-Bregman algorithms.
 The difference in nature of the convergence results (ergodic versus classical) makes the performance comparison between different methods delicate:
\begin{itemize}
 \item In view of the ergodic convergence result (Theorem \ref{theo:chamboll_pock}, point \ref{point:xbar-ybar-sol}), we plot the residuals associated with the averaged iterates (as defined in \eqref{eq:average_iter}). In particular, the performances of Chambolle-Pock and Chambolle-Pock-Bregman can be compared.
\item Following Theorem \ref{thm:ADMG}, we plot the sequence of residuals for ADM-G and ADMM.
\end{itemize}
\end{remark}

\section{Numerical Results \label{sec:numerical}}

In this section we provide two problems that we solve with the algorithms presented in the previous section.
We set $n = T = 50$ and we define two scaling coefficients $\Delta_x = 1/n$ and $\Delta_x = 1/T$.
We solve two instances of the following scaled system:
\begin{equation} \label{eq:delta-MFG'} \tag{MFG$_\Delta$}
\begin{cases}
\begin{array}{cl}
\text{(i)} &
\begin{cases}
 u(t,x)/\Delta_t + \bm{\ell}^\star[-\bm{A}^\star P - \bm{S}^\star u/\Delta_t](t,x) =  \gamma(t,x),\\
 u(T,x) =  \gamma(T,x),
\end{cases}
\\[1.5em]
\text{(ii)} & (\bm{\ell}[\pi] +  \bm{\ell}^\star[-\bm{A}^\star P - \bm{S}^\star u/\Delta_t])(t,x)  =- \langle \pi(t,x),(\bm{A}^\star P + \bm{S}^\star u/\Delta_t)(t,x) \rangle,
\\[1em]
\text{(iii)} &  \begin{cases}
\begin{array}{rl}
m(t+1,x)= & {\displaystyle \sum_{y \in S} m(t,y) \pi(t,y,x), } \\
m(0,x)= & m_0(x)/\Delta_x,
\end{array}
\end{cases} \\[2em]
\text{(iv)} & \gamma \in \partial \bm{F}[m],
\\[1em]
\text{(v)} & P\in \partial \bm{\phi}[\bm{Q}[m,\pi] \Delta_x].
\end{array}
\end{cases}
\end{equation}
One can show that this system is connected to two optimization problems of very similar nature as Problems \eqref{Pb:P''} and \eqref{Pb:dual-fenchel}, which can be solved as described previously.
For both examples, $\ell$ is defined by
\begin{equation} \label{eq:num_ell}
\ell(t,x,\rho) = \sum_{y\in S} \rho(y) \beta(t,x,y) + \chi_{\Delta(S_x)}(\rho), \; \;  \beta(t,x,y) = \left((y-x)\frac{\Delta_x}{\Delta_t} \right)^2/4,
\end{equation}
where $S_x= \{ x, x-1, x+1 \} \cap \{ 0,..., n-1 \}$.
Since $\ell$ is linear with respect to $\rho$, we can interpret $\beta$ as displacement cost from state $x$ to state $y$ that is fixed (in opposition to the displacement cost induced by $P(t)\alpha(t,x,y)$ that depends on the price). 
In Appendix \ref{Appendix} the reader can find detailed computations of the Euclidean projection \eqref{eq:proj-details} (Subsection \ref{sec:projection}) and the computation of \eqref{eq:m1hat-what} (Subsection \ref{computation:entropic}) for this particular choice of running cost $\ell$. The notion of residuals that we use in the following is adapted from Section \ref{sec:errors} to the scaled system \eqref{eq:delta-MFG'}.
In all subsequent graphs, the state space is represented by $\{ 0, \Delta_x,..., 1 \}$ and the set of time steps by $\{ 0, \Delta_t,...1 \}$.

\subsection{Example 1 \label{sec:resultmfg}}

In our first example, we take $\bm{\phi} = 0$ and $\alpha = 0$. 
We consider a potential $\bm{F}$ of the form
$\bm{F}[m] = \bm{F}_1[m] + \bm{F}_2[m]$,
where
\begin{equation}
\bm{F}_1[m](s) = |m(s)|^2/2, \quad   \bm{F}_2[m](s)  = \chi_{[0,\eta(s)]}(m(s)),
\end{equation}
and where $\eta \in \mathbb{R}_{+}(\bar{\mathcal{T}} \times S)$ is given by
\begin{equation} \nonumber
\eta(s,x) := \begin{cases} 0.5 & \text{ if } T/3 \leq s \leq 2T/3 \quad \text{and} \quad n/3 \leq x \leq 2n/3,\\
3 & \text{ else,} 
\end{cases}
\end{equation}
for any $(s,x) \in \bar{\mathcal{T}} \times \mathbb{R}(S)$.
We refer to $\bm{F}_1$ as the soft congestion term and to $\bm{F}_2$ as the hard congestion term.
\begin{figure}[htb]
  \centering
  \includegraphics[width=6cm,height=6cm]{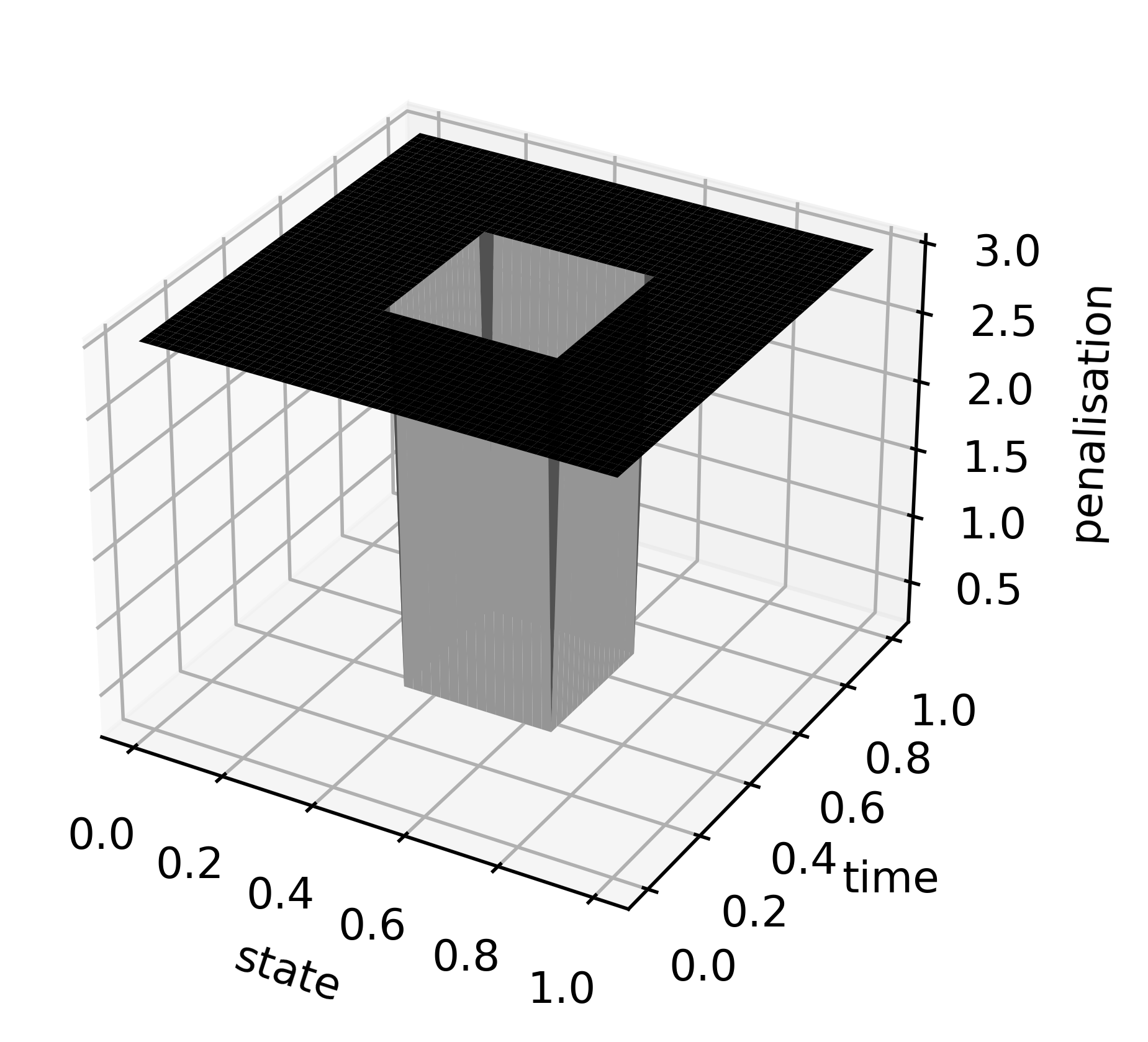}
  \caption{Hard contraint $\eta$}
  \label{fig:penalisation-eta}
\end{figure}
We call narrow region the set of points $(s,x)$ for which $\eta(s,x)= 0.5$ and we call spacious region the set of points for which $\eta(s,x)=3$.
In this situation the state of an agent represents its physical location on the interval $[0,1]$.
Each agent aims at minimizing the displacement cost  induced by $\ell$ and avoids congestion as time evolves from time $t=0$ to $t=1$. The congestion term is linked to $\eta$ 
by the following relation (see Remark \ref{remark:model}):
\begin{equation} \nonumber
\gamma \in \partial \bm{F}[m] = \nabla \bm{F}_1[m] + \partial \bm{F}_2[m]  = m + N_{[0,\eta]}(m).
\end{equation}
As shown on the graphs below, we have two regimes at equilibrium: in the spacious regions $\gamma$ plays the role of a classical congestion term and $\gamma = \nabla \bm{F}_1[m]$. In the narrow region the constraint is binding, $\gamma$ is such that the constraint $m\in [0,\eta]$ is satisfied at the equilibrium and is maximal for the dual problem. 

\begin{figure}[p]
\fbox{
\begin{minipage}[c]{0.95\linewidth}
\begin{subfigure}[c]{0.5\textwidth}
\centering
\includegraphics[width=5cm,height=4cm]{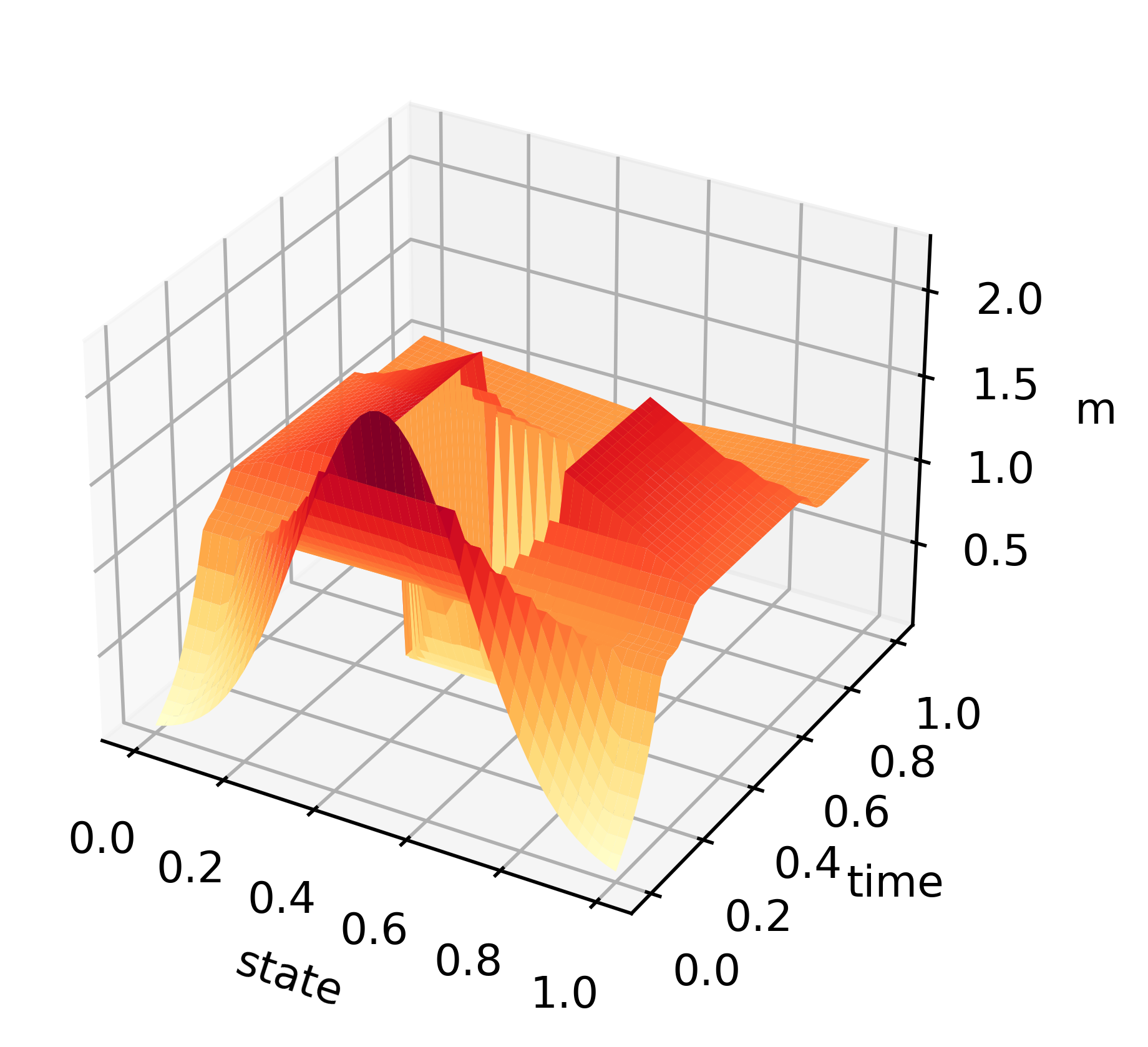}
\caption{Measure $m$, 3d plot}
\end{subfigure}
\begin{subfigure}[c]{0.5\textwidth}
\centering
\includegraphics[width=5cm,height=4cm]{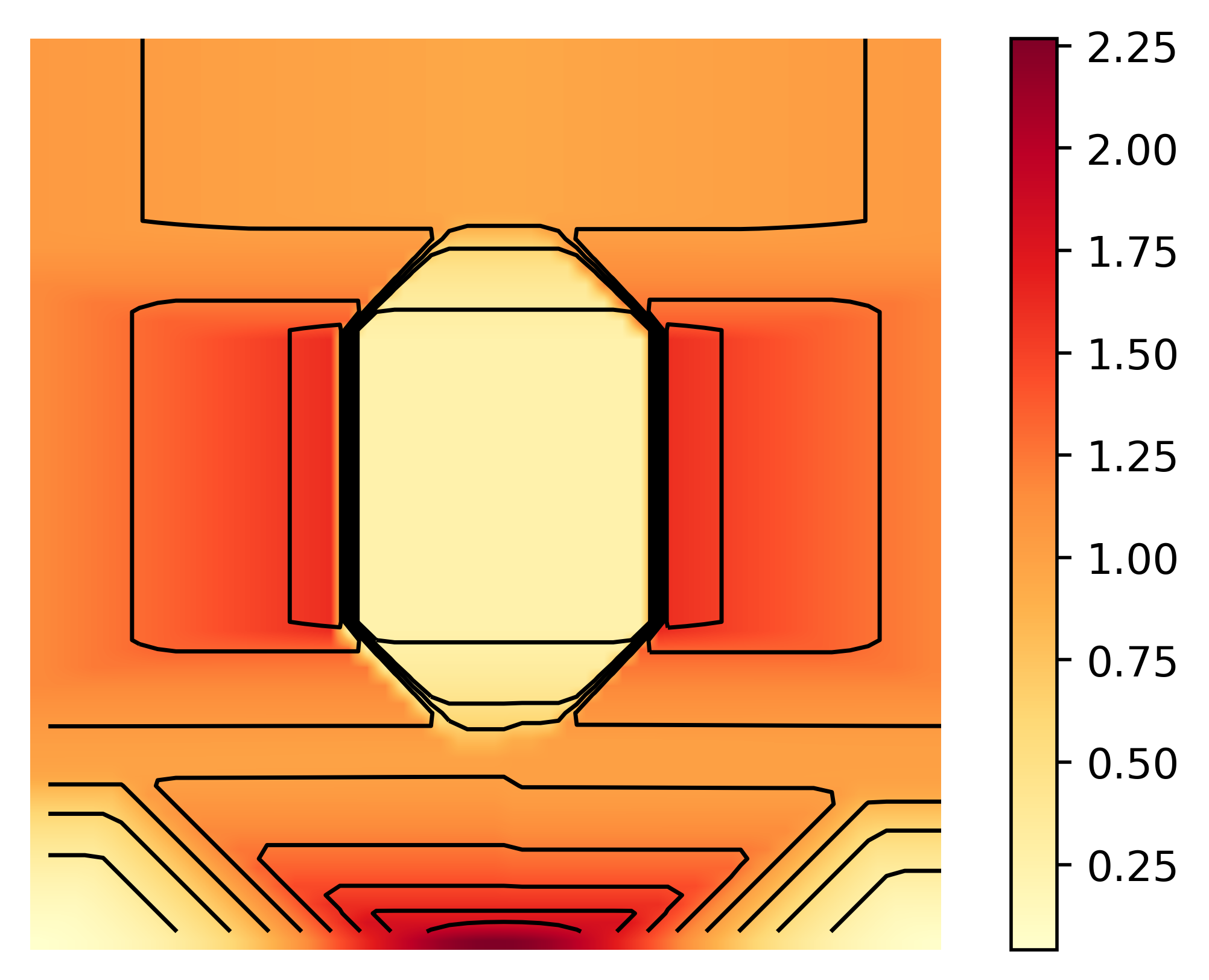}
\caption{Measure $m$, contour plot}
\end{subfigure} \\
\begin{subfigure}[c]{0.5\textwidth}
\centering
\includegraphics[width=5cm,height=4cm]{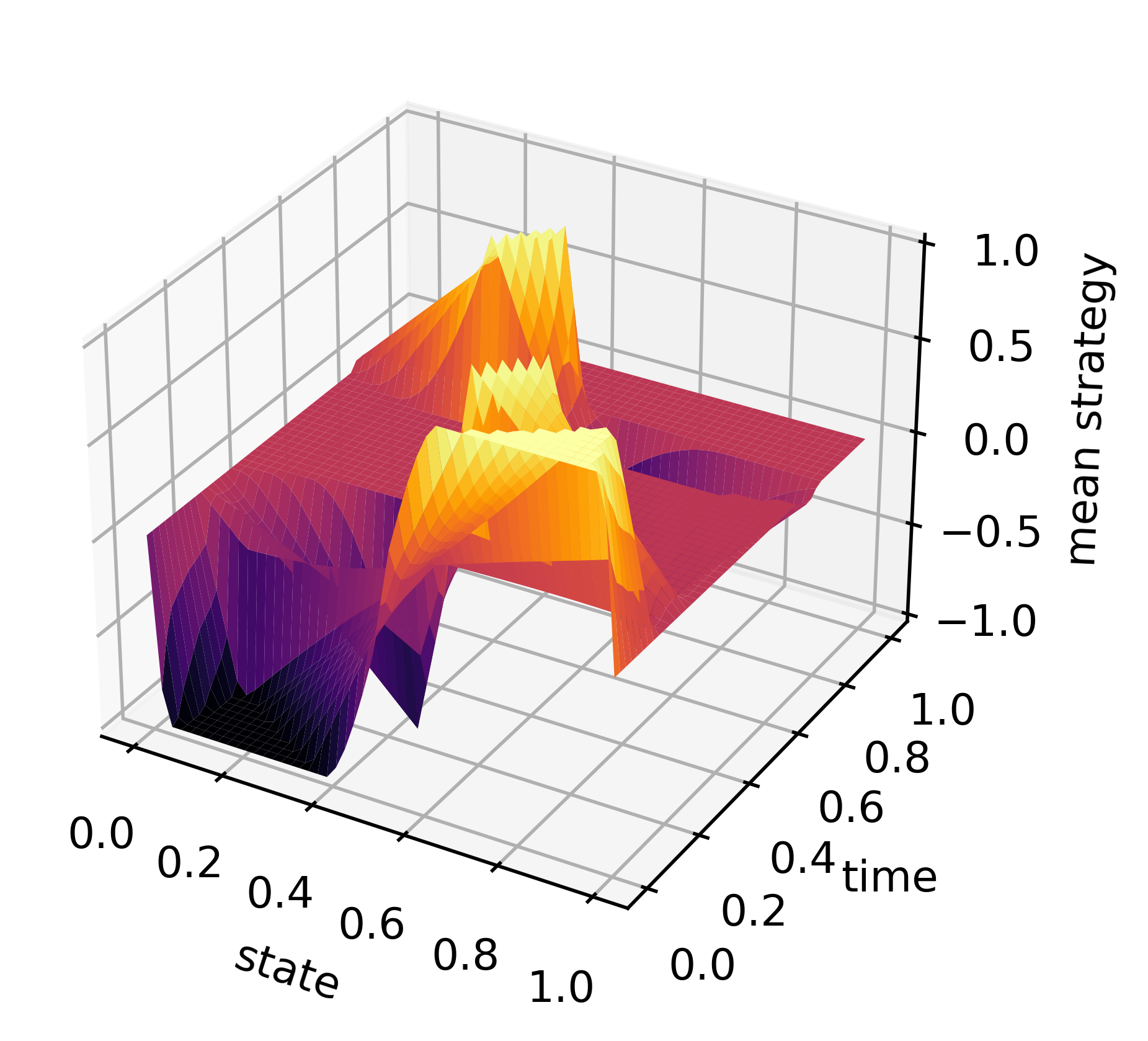}
\caption{Mean displacement $v$, 3d plot}
\end{subfigure}
\begin{subfigure}[c]{0.5\textwidth}
\centering
\includegraphics[width=5cm,height=4cm]{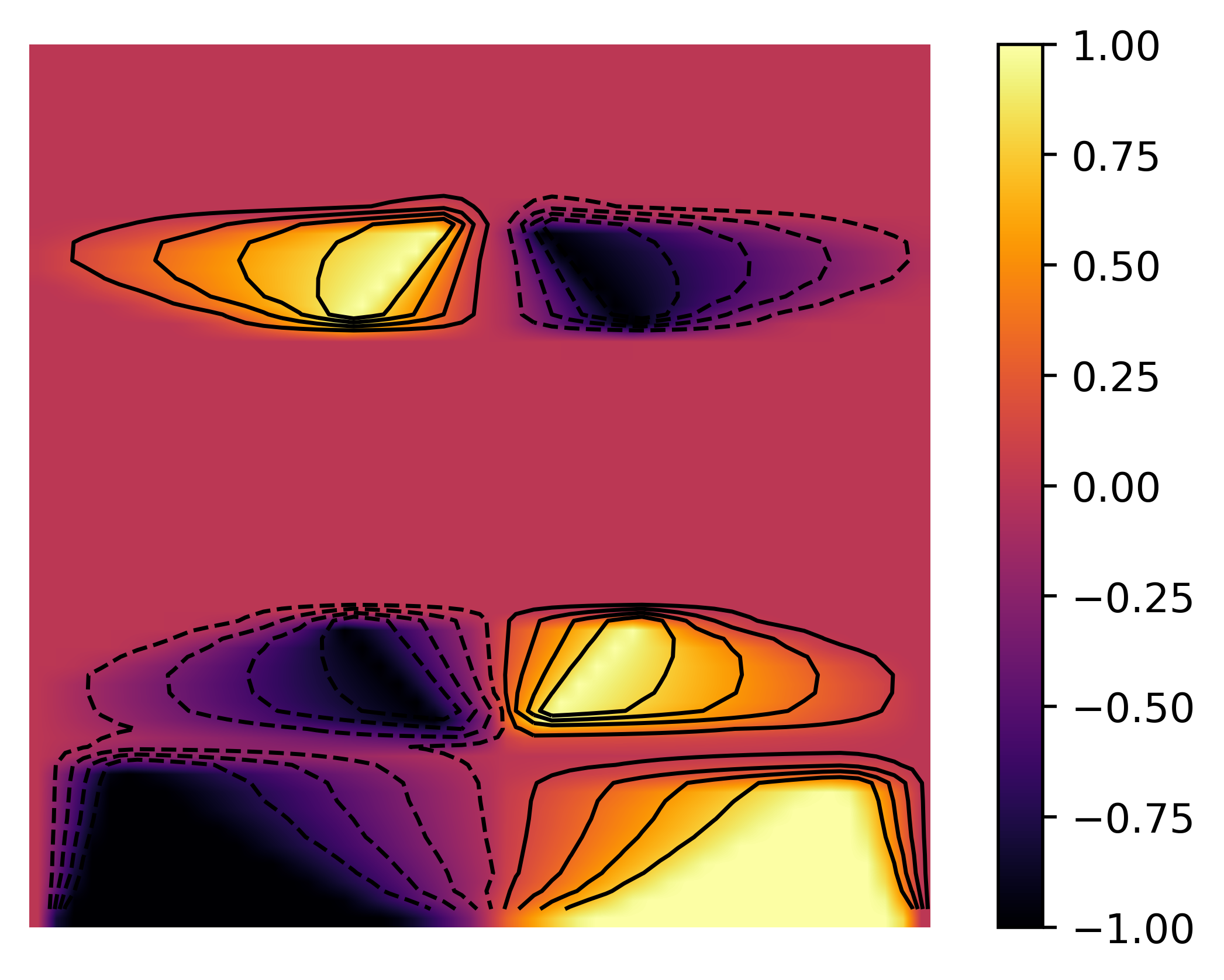}
\caption{Mean displacement $v$, contour plot}
\end{subfigure} \\
\begin{subfigure}[c]{0.5\textwidth}
\centering
\includegraphics[width=5cm,height=4cm]{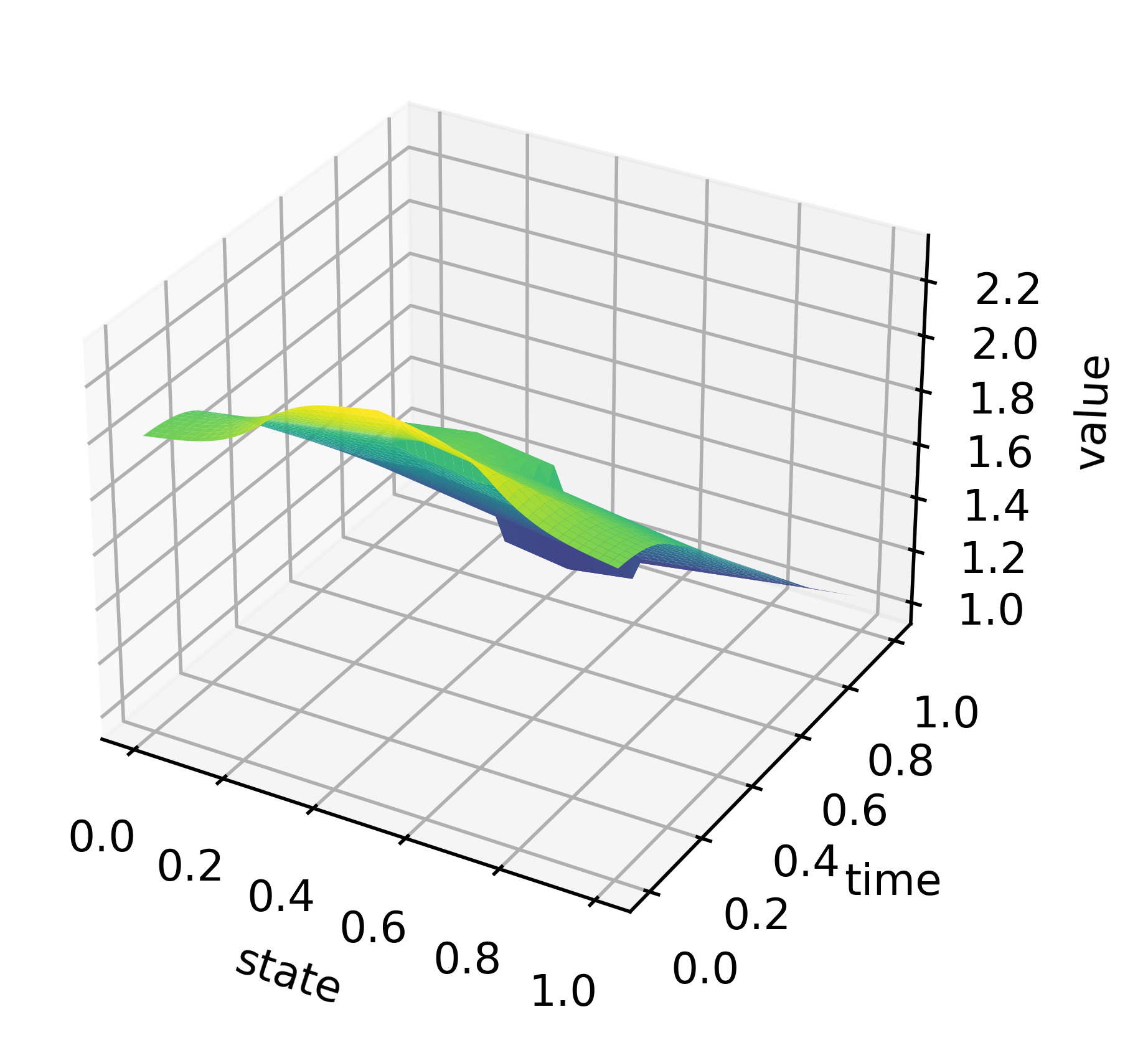}
\caption{Value function $u$, 3d plot}
\end{subfigure}
\begin{subfigure}[c]{0.5\textwidth}
\centering
\includegraphics[width=5cm,height=4cm]{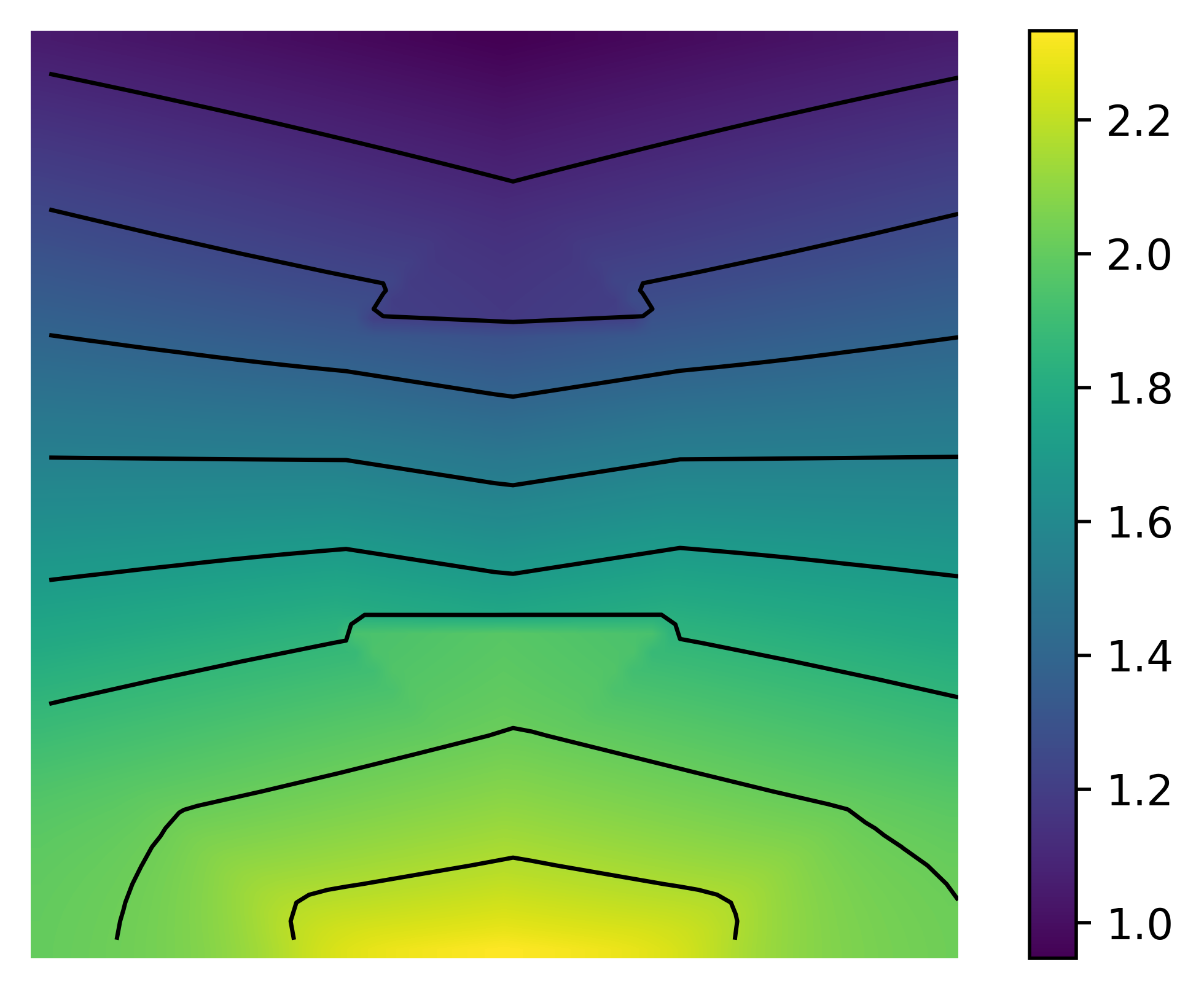}
\caption{Value function $u$, contour plot}
\end{subfigure} \\
\begin{subfigure}[c]{0.5\textwidth}
\centering
\includegraphics[width=5cm,height=4cm]{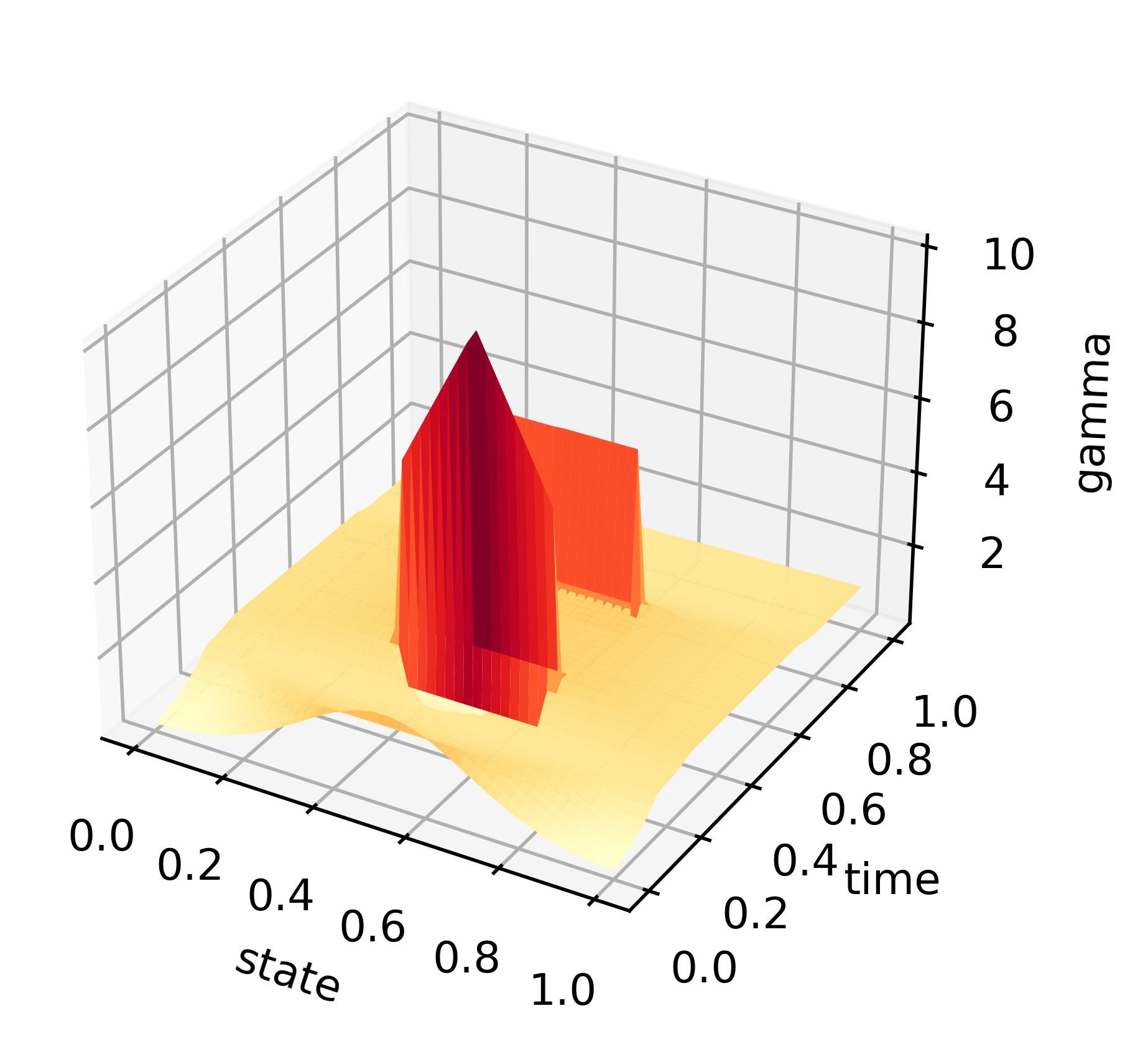}
\caption{Congestion $\gamma$, 3d plot}
\end{subfigure}
\begin{subfigure}[c]{0.5\textwidth}
\centering
\includegraphics[width=5cm,height=4cm]{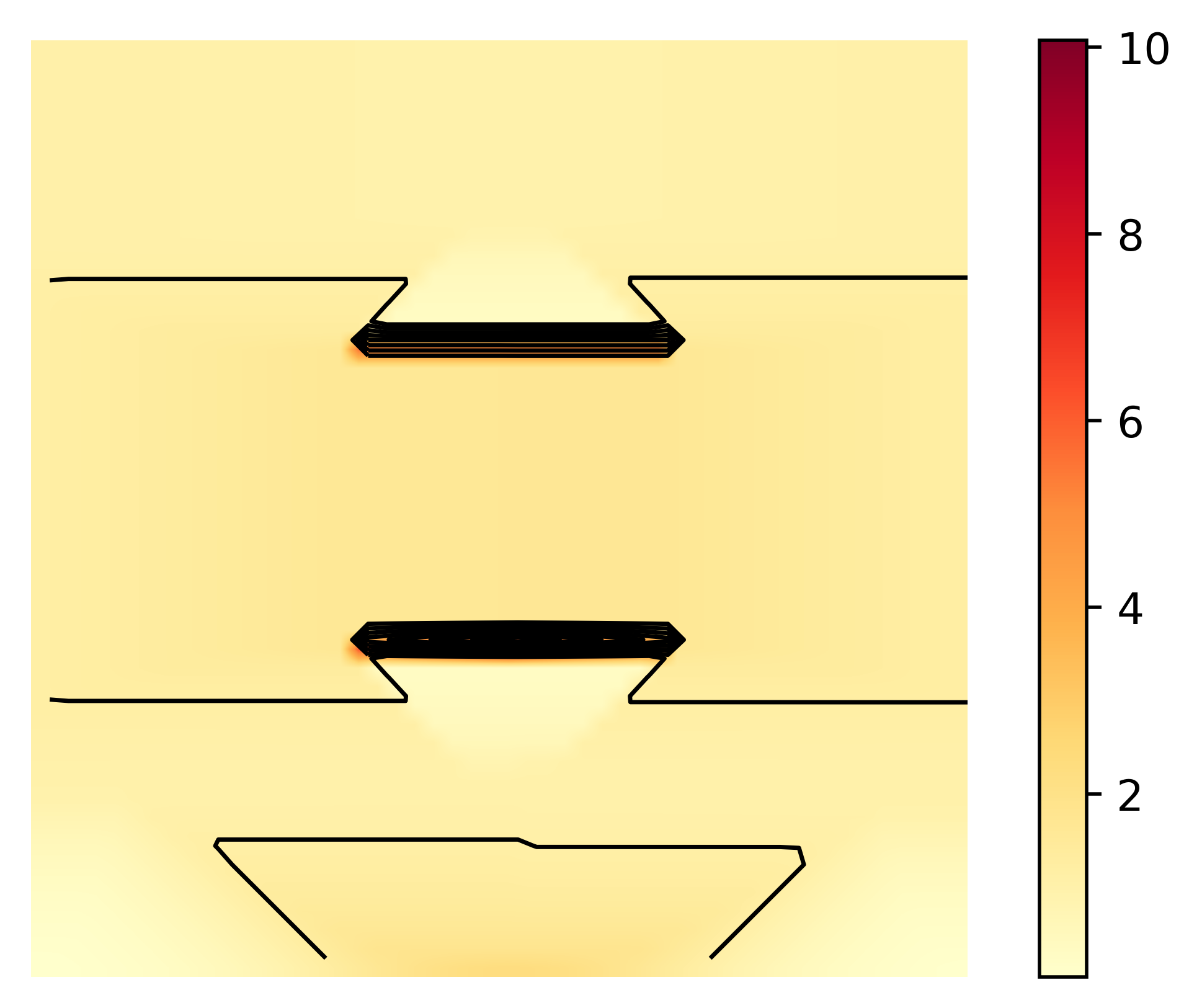}
\caption{Congestion $\gamma$, contour plot}
\end{subfigure}
\caption{Solution of Example 1}
\label{fig:sol_ex1}
\end{minipage}
}
\end{figure}

We give a representation of the solution to the mean field system in Figure \ref{fig:sol_ex1}. Since it is hard to give a graphical representation of $\pi$, we give instead a graph of the mean displacement $v$, defined by
\begin{equation} \nonumber
v(t,x) = \sum_{y\in S} \pi(t,x,y)(y-x), \quad
\forall (t,x) \in \mathcal{T} \times S.
\end{equation}
For each variable, a 3D representation of the graph and a 2D representation of the contour plots are provided. For the contour plots, the horizontal axis corresponds to the state space and the vertical axis to the time steps (to be read from the bottom to the top).


Let us comment the results. We start with the interpretation of the measure $m$ and the mean displacement $v$. 
At the beginning of the game, the distribution of players is given by the initial condition $m(0) = \bar{m}_0$. Then the players spread since they are in the spacious region to avoid congestion.

\begin{figure}[htb]
\fbox{
\begin{minipage}[c]{0.95\linewidth}
\begin{center}
\textbf{Convergence results}
\end{center}
\vspace{2em}
\begin{subfigure}[c]{0.5\textwidth}
\centering
\includegraphics[width=5cm,height=4cm]{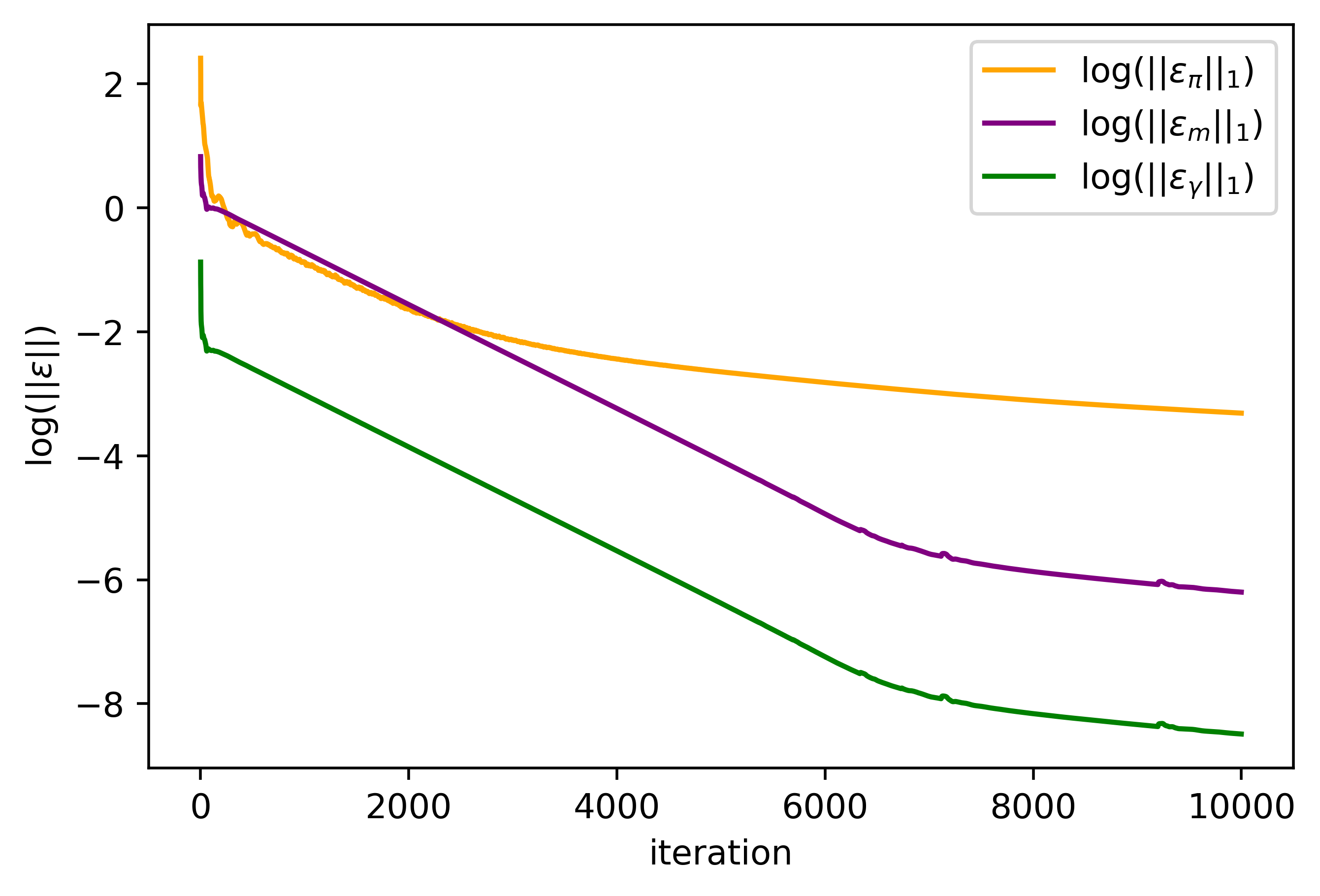}
\caption{ADMM}
\end{subfigure}
\begin{subfigure}[c]{0.5\textwidth}
\centering
 \includegraphics[width=5cm,height=4cm]{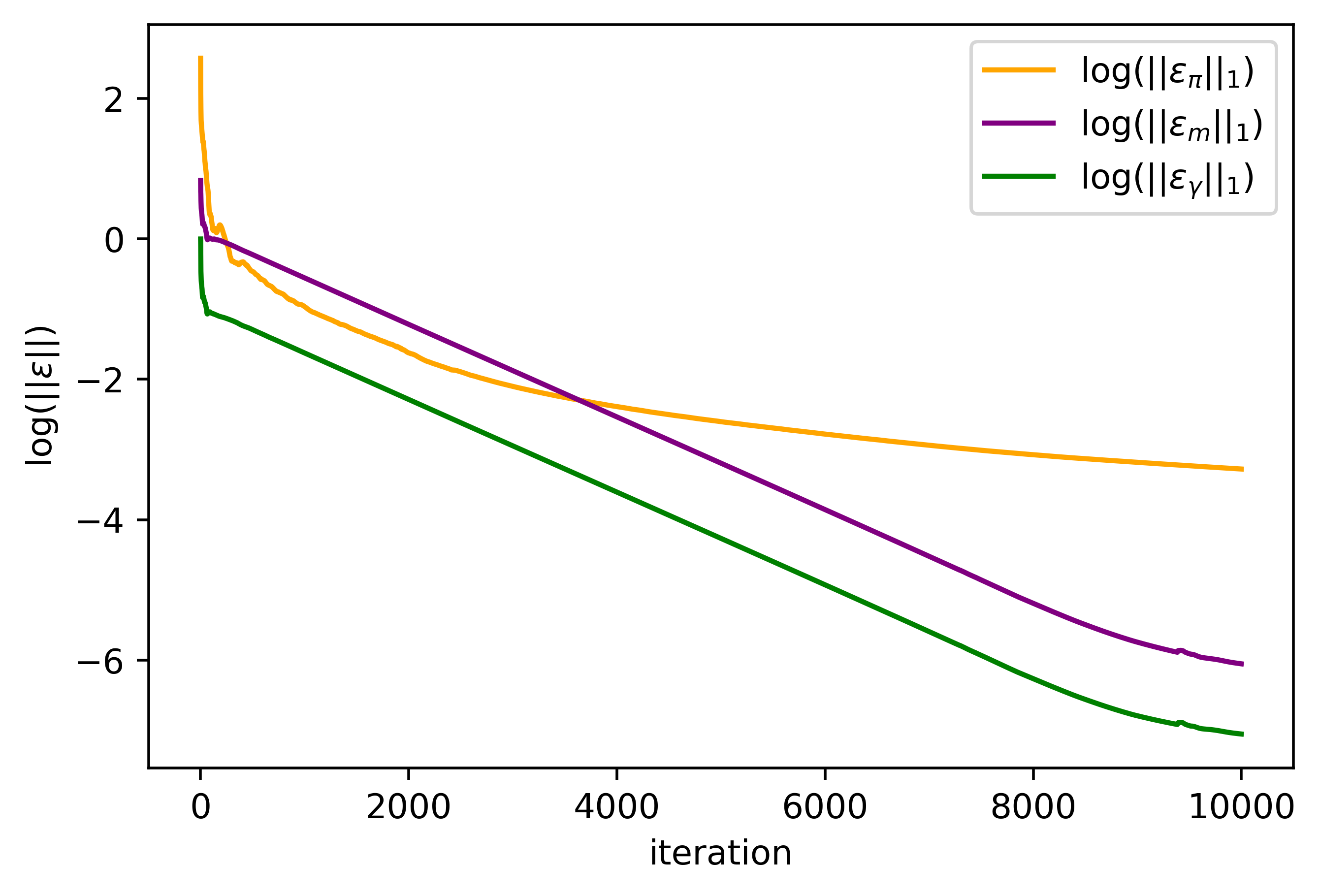} 
 \caption{ADMG}
 \end{subfigure}  \\
 \begin{subfigure}[c]{0.5\textwidth}
 \centering
\includegraphics[width=5cm,height=4cm]{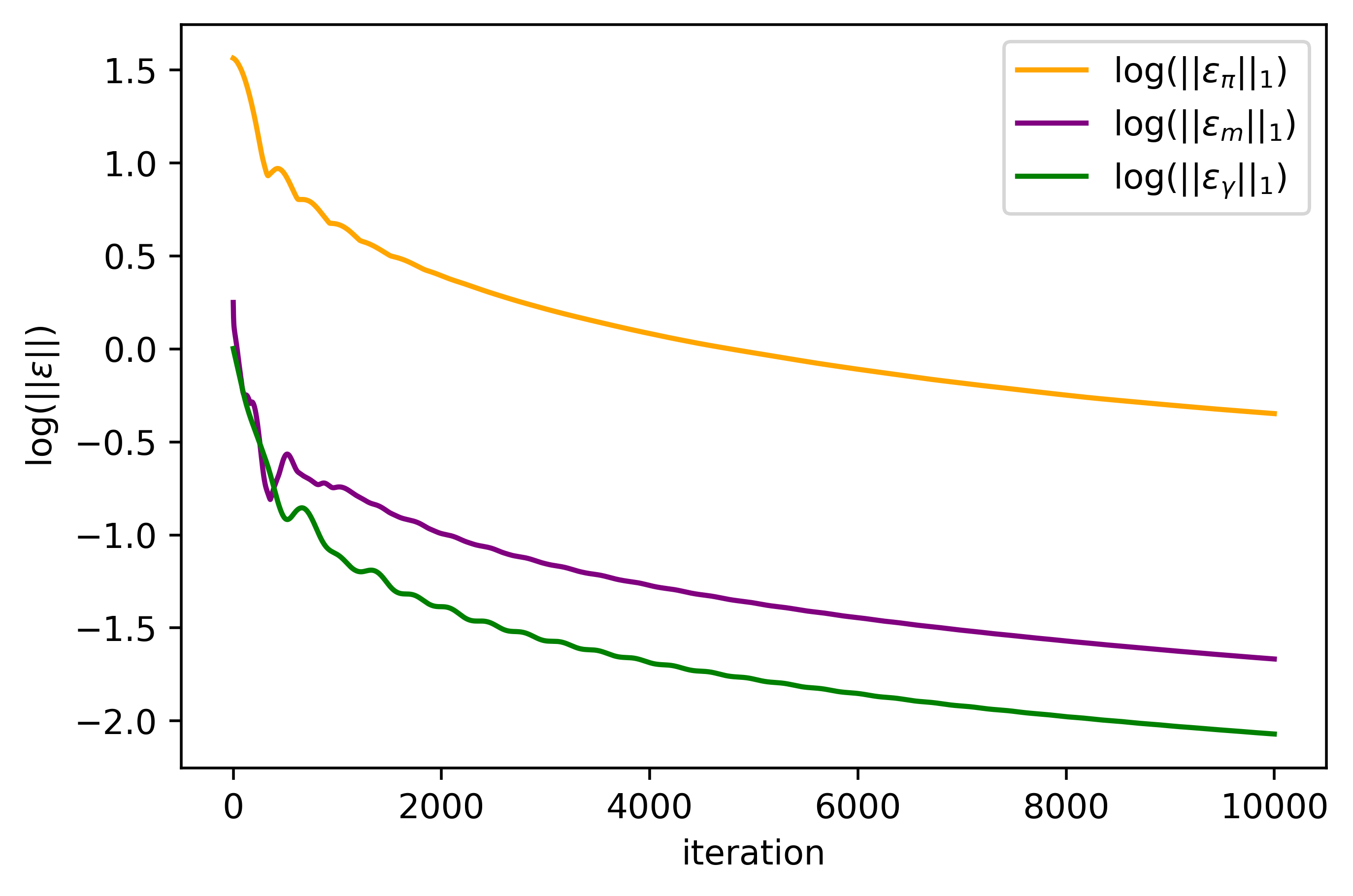} 
\caption{Chambolle-Pock}
\end{subfigure}
 \begin{subfigure}[c]{0.5\textwidth}
 \centering
\includegraphics[width=5cm,height=4cm]{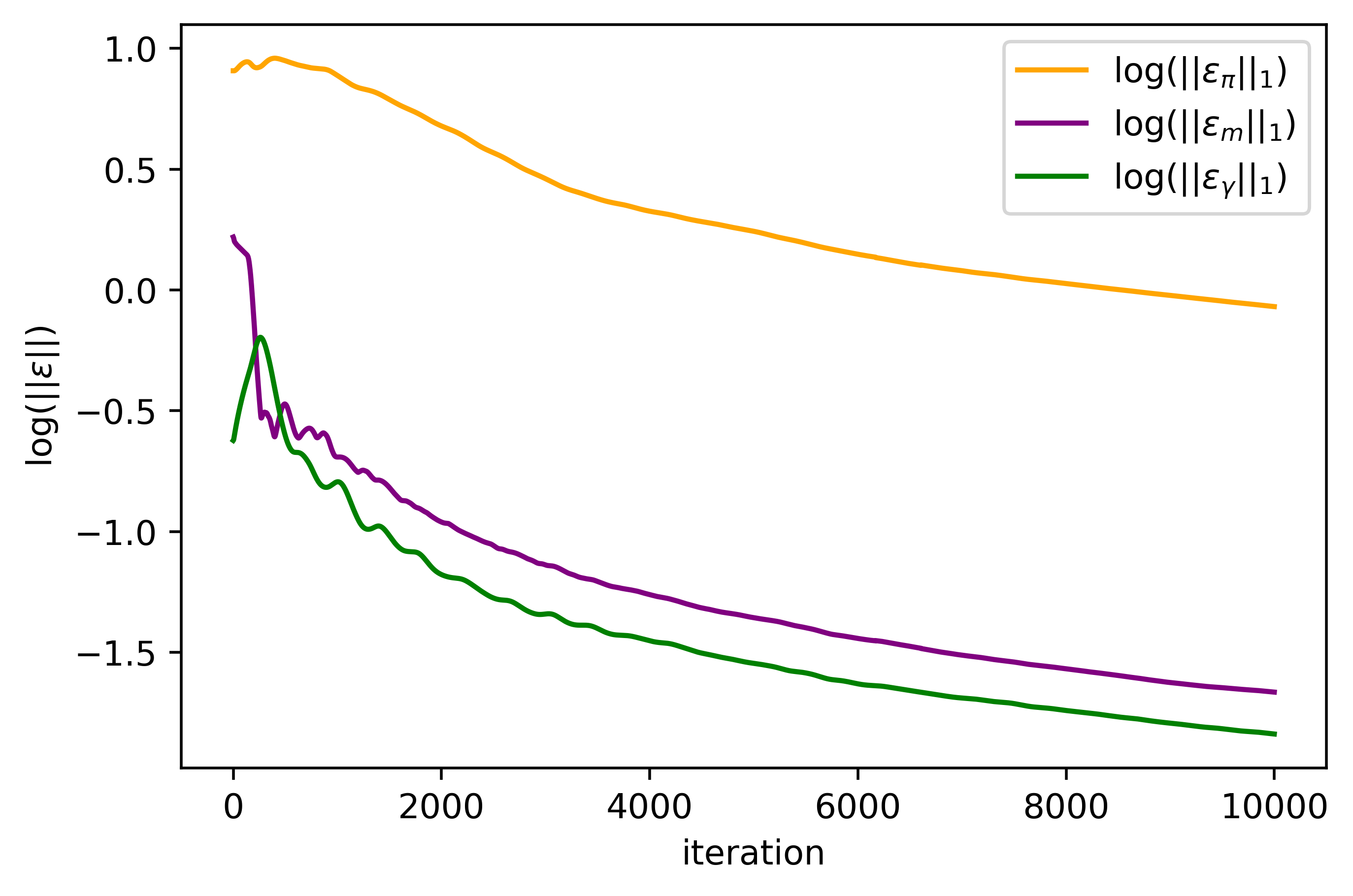}
\caption{Chambolle-Pock-Bregman} 
\end{subfigure}
\caption{Errors plots for Example 1}
\label{fig:error_plots_1}

\vspace{1em}

Figure \ref{fig:error_plots_1} shows the evolution of the error terms in function of the iterations. The execution time of each algorithm is given in the following table.

\vspace{1em}

\centering
\begin{tabular}{|c|c|c|c|c|}
\hline
& Chambolle-Pock & Chambolle-Pock-Bregman & ADMM & ADM-G \\ \hline
Time (s) & 1600 & 1300 & 2000 & 2000 \\ \hline
\end{tabular}
\caption{Execution time of each algorithm for Example 1, with $N= 10000$}
\end{minipage}
}
\end{figure}

Thus the mean displacement is negative on the left (black region) and positive on the right (yellow region), around $t= 0$. The distribution becomes uniform after some time.
In a second phase, the agents move again towards the border of the state space, anticipating the narrow region. They start their displacement before entering into the narrow region due to their limited speed and displacement cost. 
Then we are in a stationary regime (purple region), the mean displacement is null and the mass does not vary until the end of the narrow region. At the end of the narrow region, the agents spread again along the state axis and the distribution $m$ becomes uniform.

We now interpret the value $u$ and the congestion $\gamma$.
The value function has to be interpreted backward in time. At the end of the game, the terminal condition imposes that the value is equal to the congestion. Since the congestion is positive and accumulates backward in the value function (which can be seen in the dynamic programming equation), the value function increases backward in time. At the end and at the beginning of the narrow region we observe irregularities in the value function due to the irregularities of the congestion term $\gamma$. But the impact on the value function is limited due to the trade-off between the variables $u$ and $\gamma$ in the dual problem. At the beginning of the game the value function is higher at the middle of the space because of the initial distribution of players that are accumulated at this point. The congestion term $\gamma$ is high enough at the beginning of the narrow region to ensure that the constraint on the distribution of players is satisfied at this point. Then $\gamma$ is high enough at the end of the narrow region to ensure that the constraint on the distribution of players is satisfied for all time indices $T/3\leq s \leq 2T/3$. At the exception of these two moments, $\gamma$ plays the role of a classical congestion term.

\subsection{Example 2 \label{sec:resultmfgc}}

Here we assume that $\bm{F} = 0$. In this situation the state of an individual agent represents a level of stock. We set $\alpha(t,x,y) = y-x$; it represents the quantity bought in order to ``move" from $x$ to $y$.
Therefore the variable $D$ (used in the primal problem) is the average quantity which is bought; it has to be understood as a demand, since at equilibrium,
\begin{equation} \nonumber
D(t) = \bm{Q}[m,\pi](t) = \sum_{(x,y) \in S^2} m(t,x) \pi(t,x,y) \alpha(t,x,y).
\end{equation}
We define the potential $\bm{\phi}[D] = \bm{\phi}_1[D] + \bm{\phi}_2[D]$,
where 
\begin{equation} \nonumber
\bm{\phi}_1[D] =  \frac{1}{4}(D + \bar{D})^2, \quad \bm{\phi}_2[D] =  \chi_{(-\infty,D_{\text{max}}]}(D).
\end{equation}
The potential $\bm{\phi}$ is the sum of a convex and differential term $\bm{\phi}_1$ with full domain and a convex non-differentiable term $\bm{\phi}_2$. The quantity $\bar{D}$ is a given exogenous quantity which represent a net demand (positive or negative) to be satisfied by the agents. In this example $\bar{D}(t) = 2\sin(4\pi t/(T-1))$ for any $t \in \mathcal{T}$ and $D_{\text{max}} = 0$. 

\begin{figure}[htb]
\fbox{
\begin{minipage}[c]{0.95\linewidth}
\begin{center}
\textbf{Market equilibrium}
\end{center}
\vspace{2em}
\begin{subfigure}[c]{0.5\textwidth}
\centering
\includegraphics[width=5cm,height=4cm]{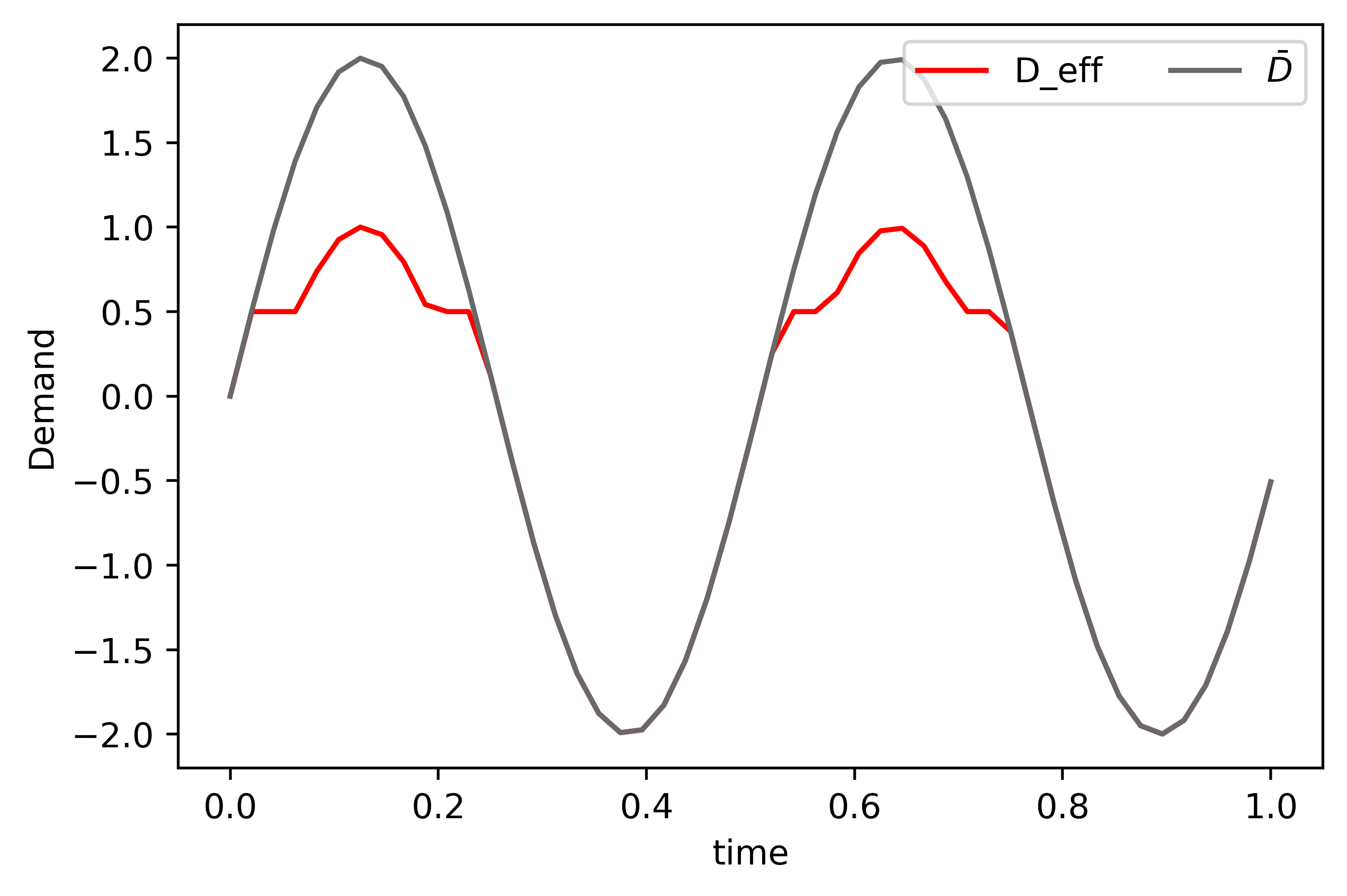}
\caption{Exogenous quantity $\bar{D}$ and effective demand $D_{\text{eff}}$}
\end{subfigure}
\begin{subfigure}[c]{0.5\textwidth}
\centering
\includegraphics[width=5cm,height=4cm]{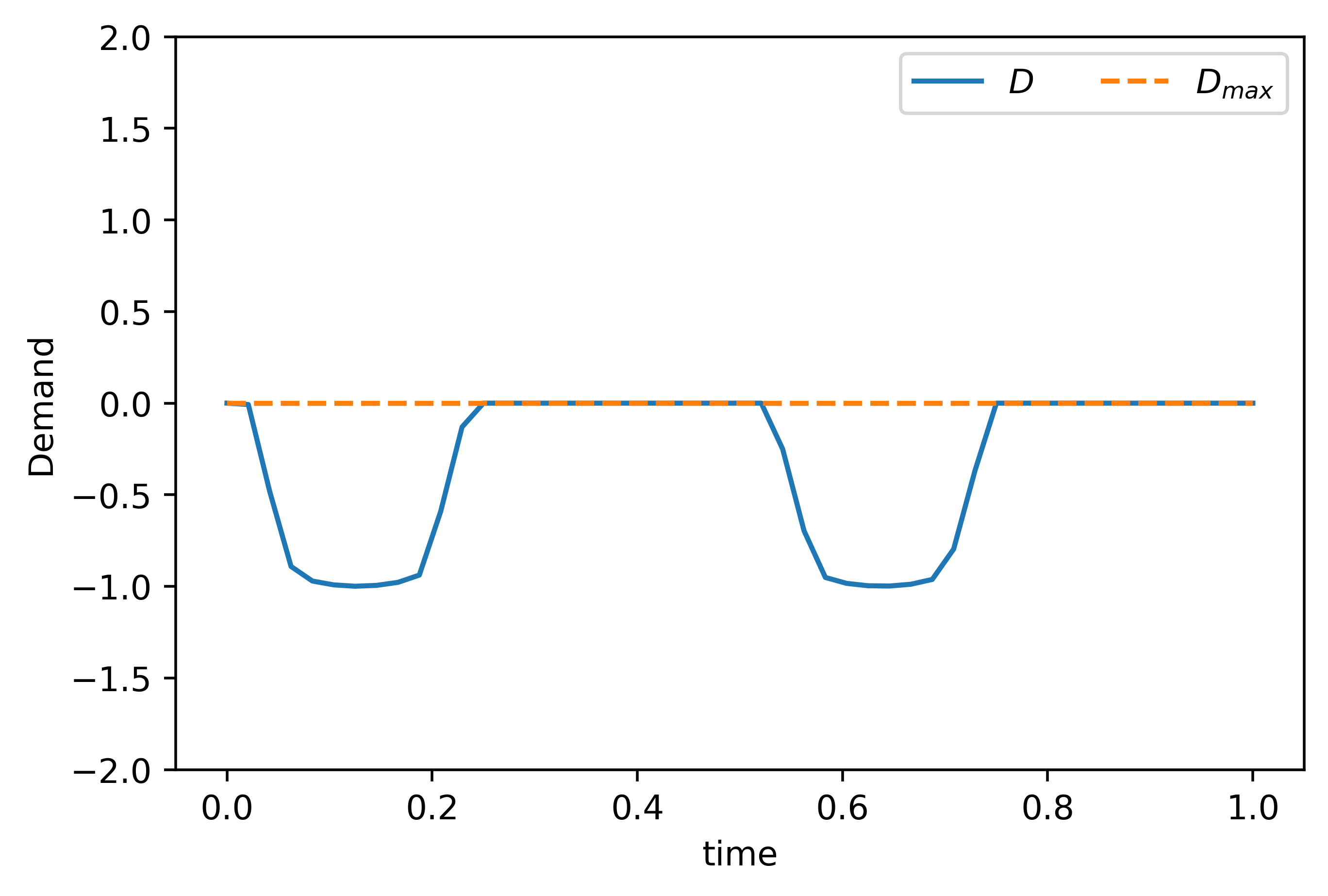}
\caption{Maximal aggregated demand $D_{\text{max}}$ and demand $D$}
\end{subfigure} \\
\begin{subfigure}[c]{0.5\textwidth}
\centering
\includegraphics[width=5cm,height=4cm]{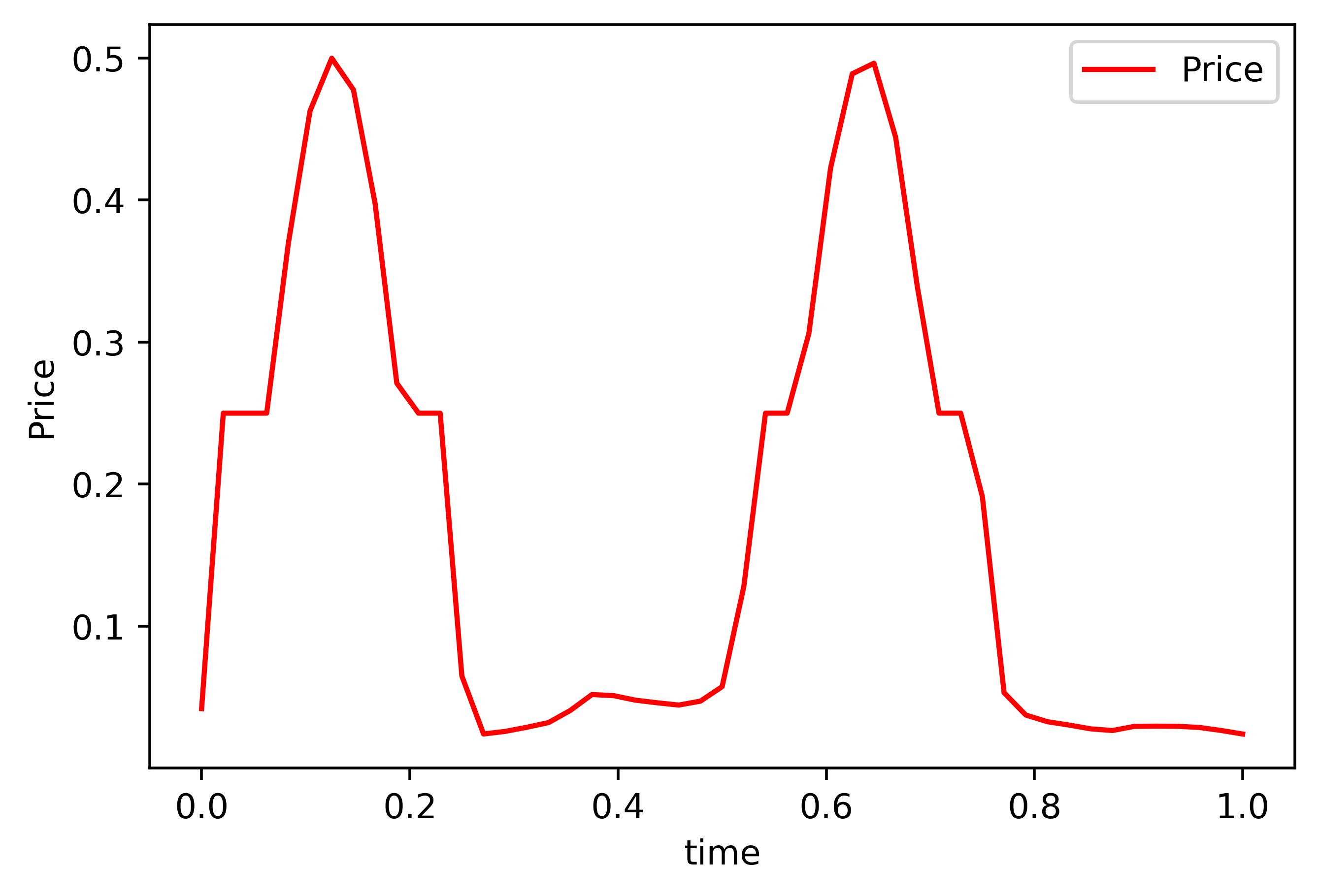}
\caption{Price $P$}
\end{subfigure}
\end{minipage}
}
\end{figure}

In this situation each agent faces a price and chooses to increase or deplete her stock. The price mechanism is given by
\begin{equation} \nonumber
P(t) \in \partial \bm{\phi}[D](t) = \nabla \bm{\phi}_1[D] + \partial \bm{\phi}_2[D]  = \frac{1}{2}D_{\text{eff}}(t) + N_{(-\infty,D_{\text{max}}]}(D(t))
\end{equation}
where $D_{\text{eff}} := D + \bar{D}$ is called the effective demand and follows two regimes.

\begin{figure}[htb]
\fbox{
\begin{minipage}[c]{0.95\linewidth}
\begin{subfigure}[c]{0.5\textwidth}
\centering
\includegraphics[width=5cm,height=4cm]{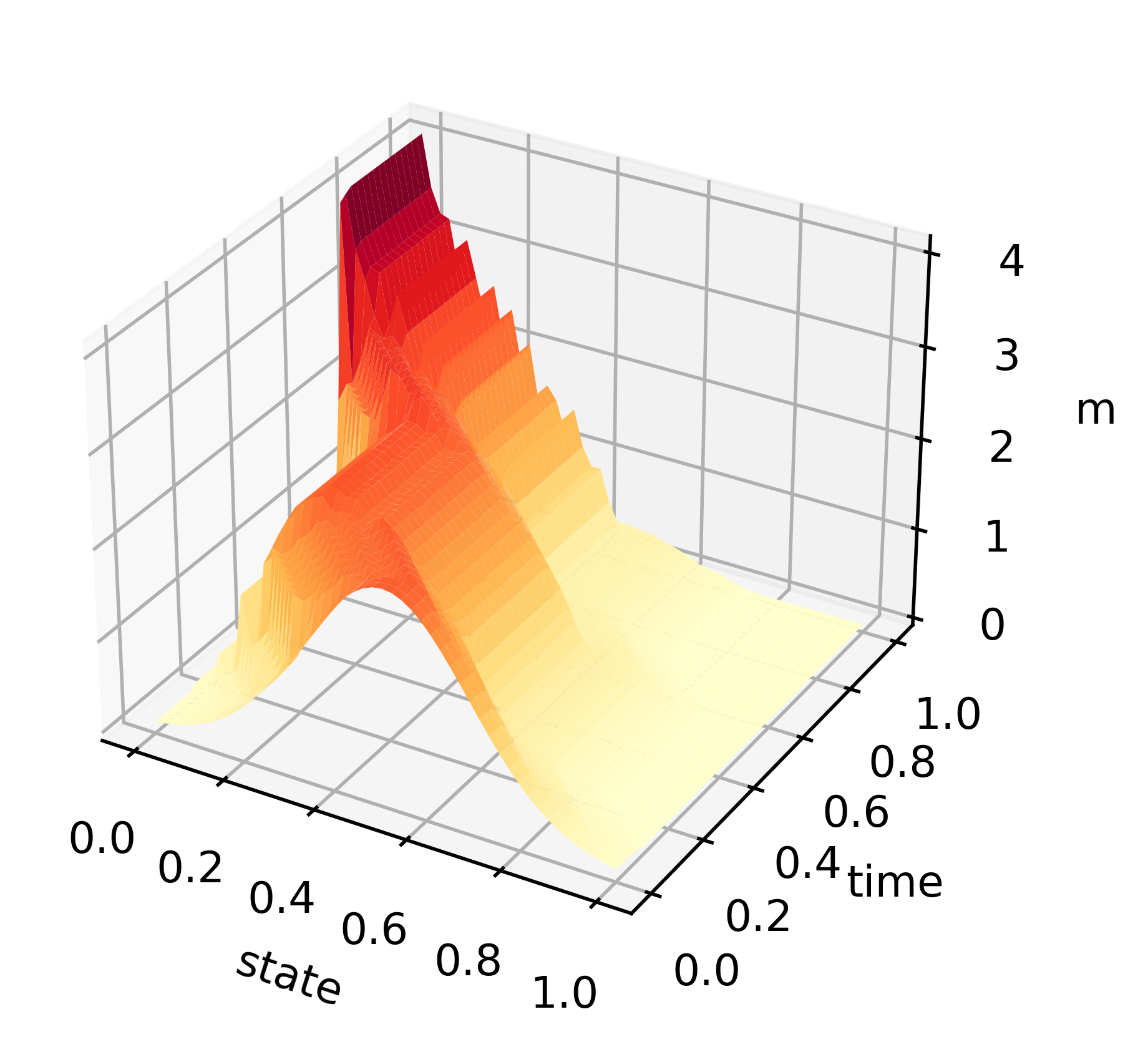}
\caption{Measure $m$, 3d plot}
\end{subfigure}
\begin{subfigure}[c]{0.5\textwidth}
\centering
\includegraphics[width=5cm,height=4cm]{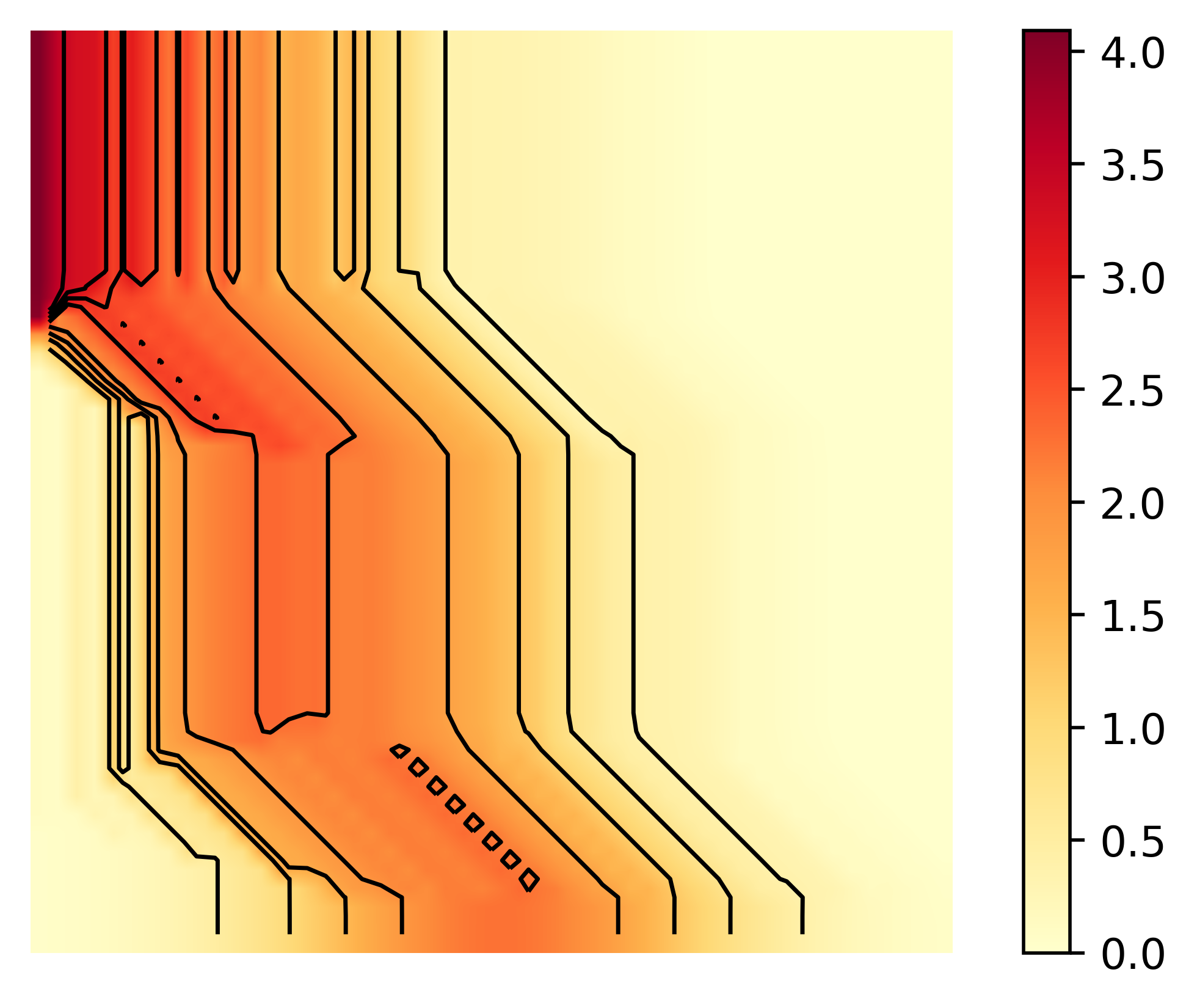}
\caption{Measure $m$, contour plot}
\end{subfigure} \\
\begin{subfigure}[c]{0.5\textwidth}
\centering
\includegraphics[width=5cm,height=4cm]{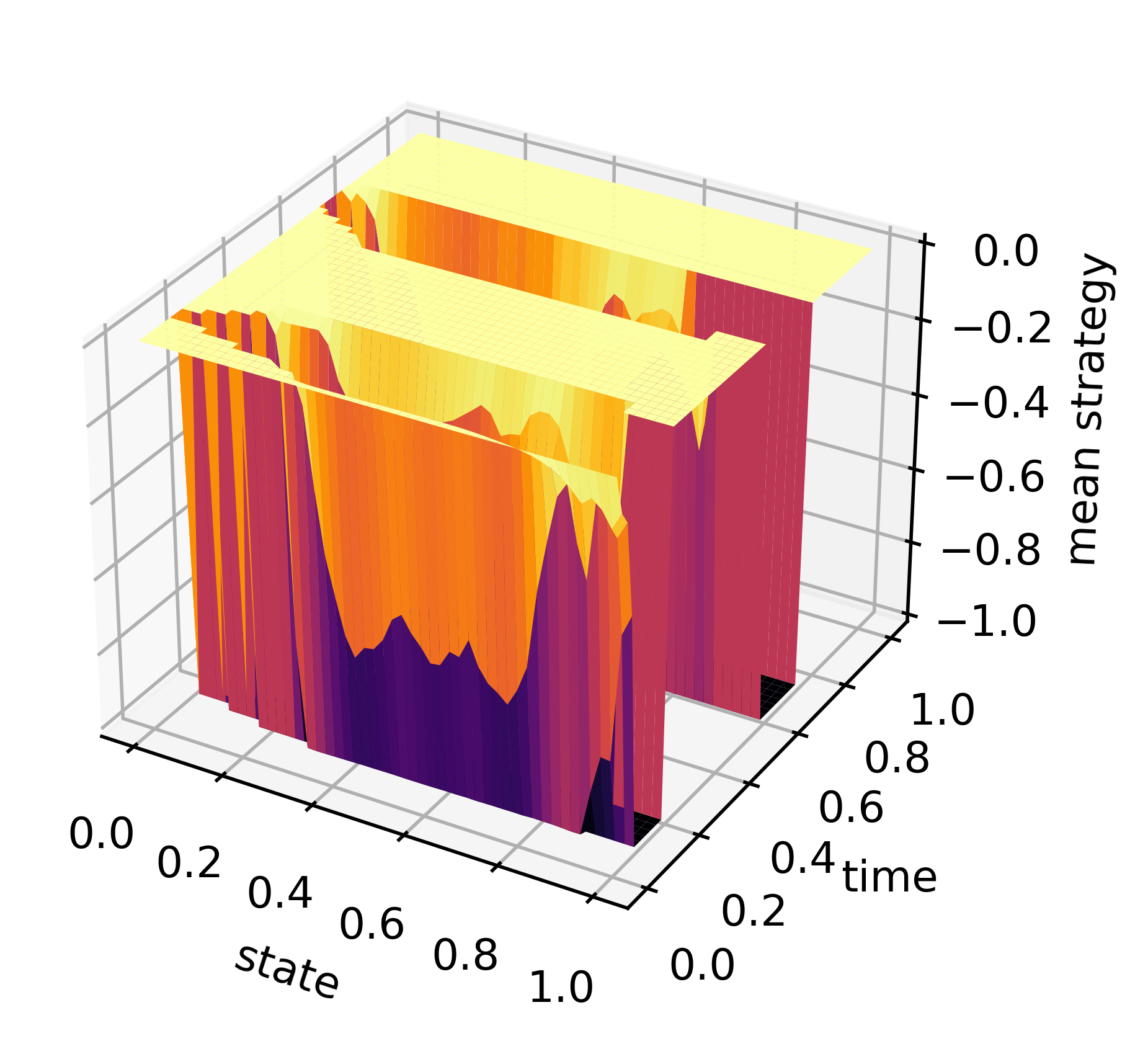}
\caption{Mean displacement $v$, 3d plot}
\end{subfigure}
\begin{subfigure}[c]{0.5\textwidth}
\centering
\includegraphics[width=5cm,height=4cm]{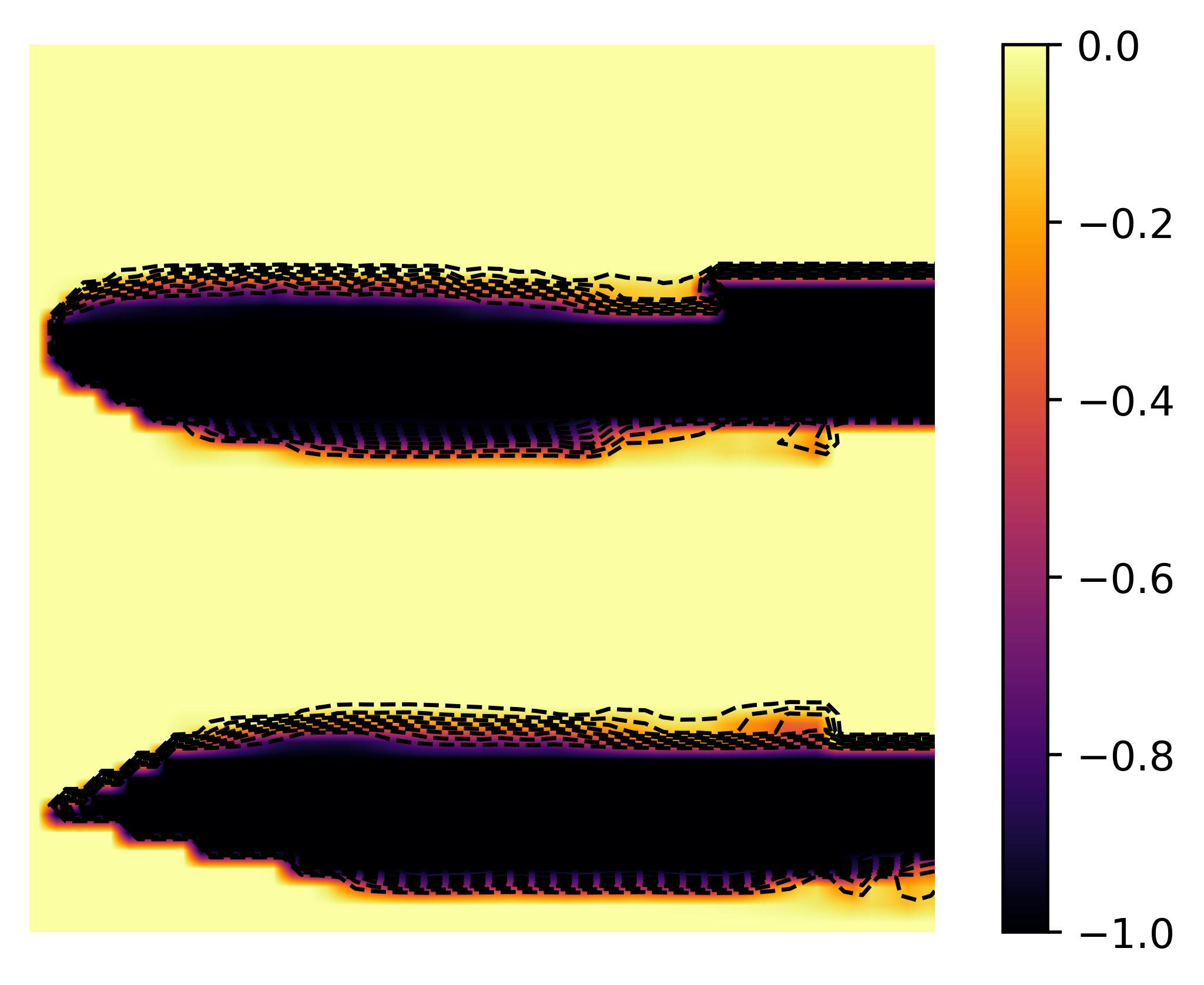}
\caption{Mean displacement $v$, contour plot}
\end{subfigure} \\
\begin{subfigure}[c]{0.5\textwidth}
\centering
\includegraphics[width=5cm,height=4cm]{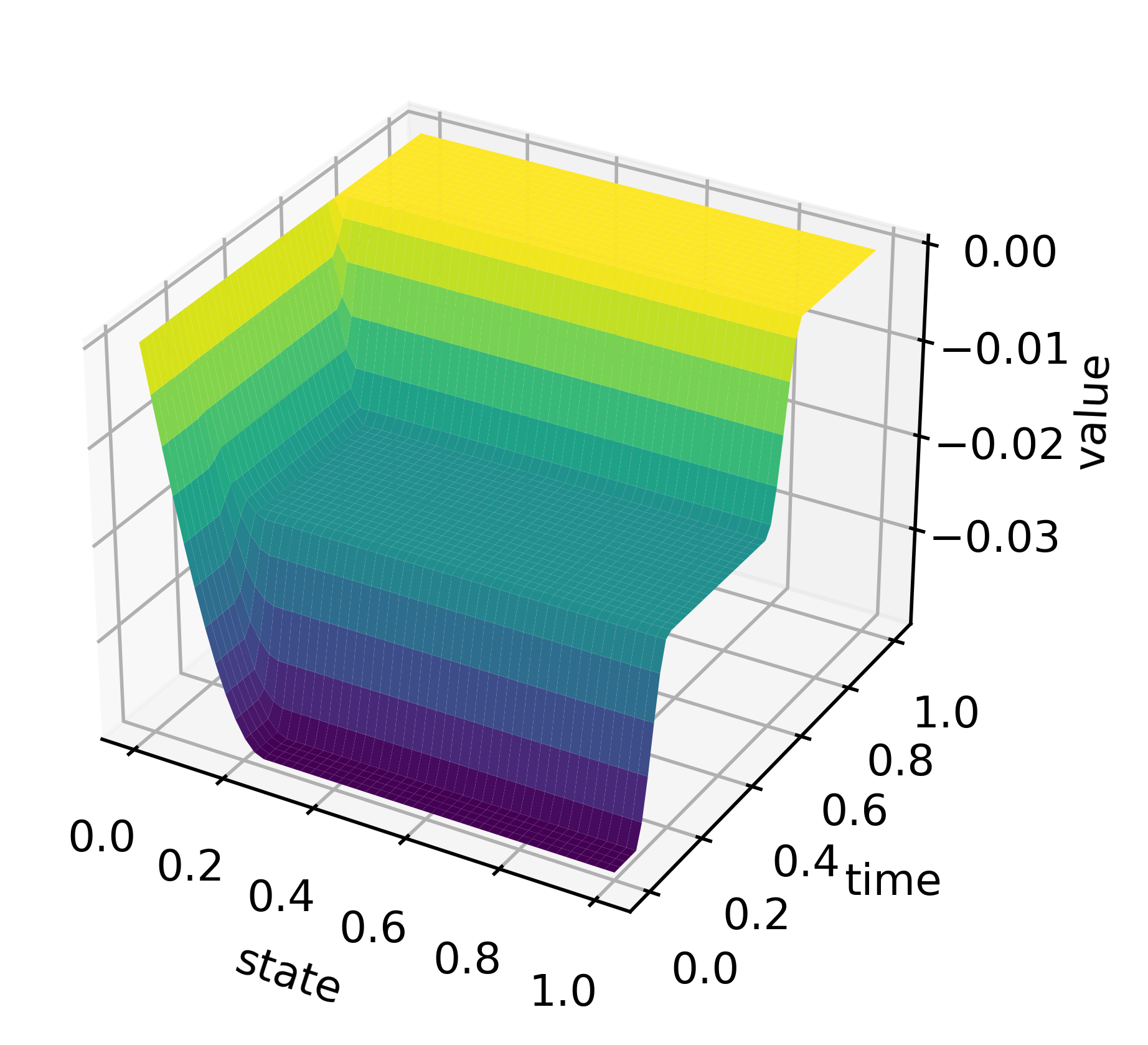}
\caption{Value function $u$, 3d plot}
\end{subfigure}
\begin{subfigure}[c]{0.5\textwidth}
\centering
\includegraphics[width=5cm,height=4cm]{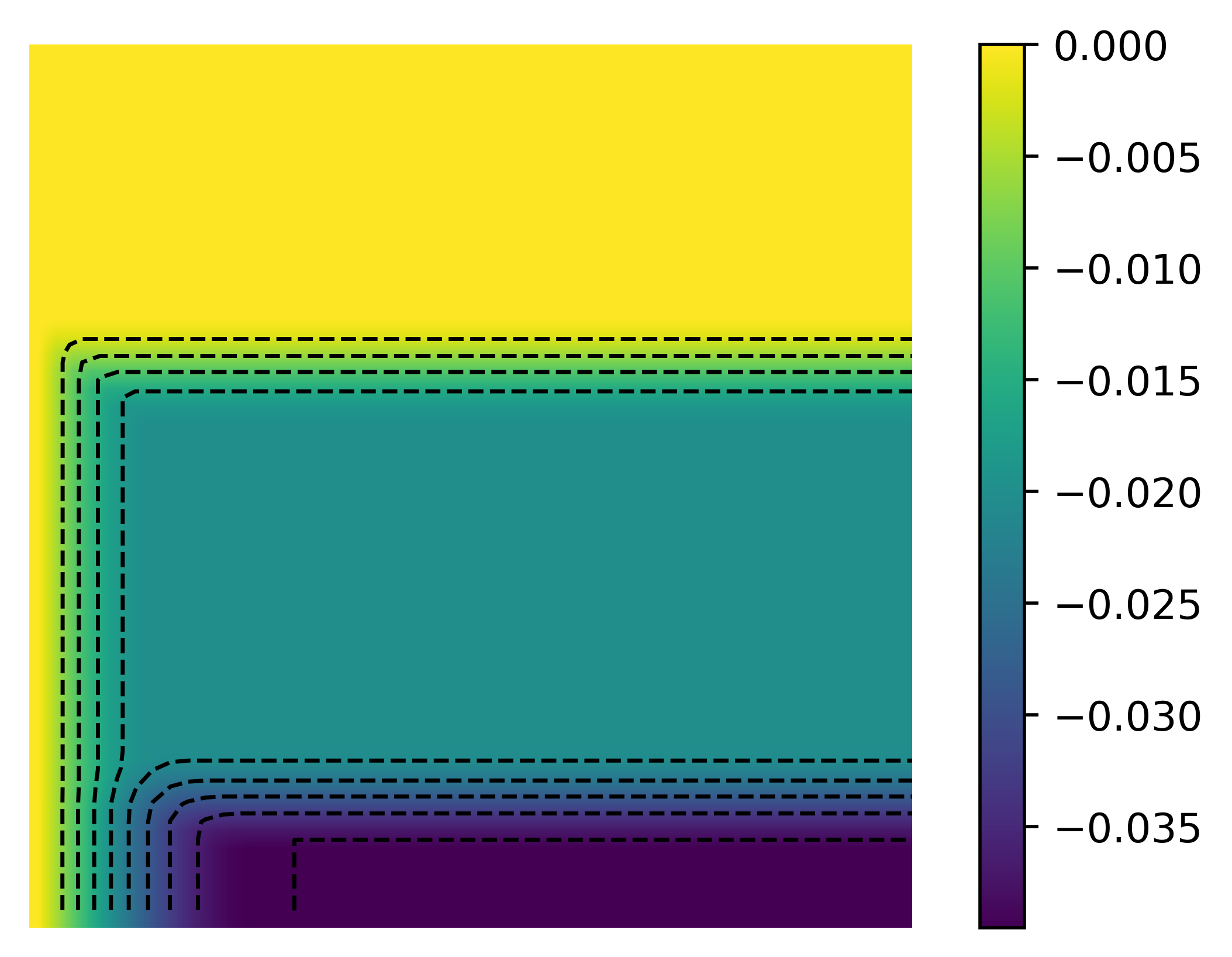}
\caption{Value function $u$, contour plot}
\end{subfigure} 
\caption{Solution of Example 2}
\label{fig:sol_ex2}
\end{minipage}
}
\end{figure}

When the constraint on the demand $D$ is not binding, we are in a soft regime, the price plays the role of a classical price term and is given by $P(t) = \frac{1}{2}D_{\text{eff}}(t)$. The quantity $\bar{D}$ is an exogenous quantity which can be positive or negative. If the quantity $\bar{D}(t) > 0$, the exogenous quantity is interpreted as being a demand and the agents have an incentive to deplete their stock to satisfy this demand. If $\bar{D}(t) < 0$, the exogenous quantity is  interpreted as being a supply. In the absence of a hard constraint, the agents would have interest to increase their stock to absorb this supply.
When the constraint on the demand is binding, we are in a hard regime and the price plays the role of an adjustment variable so that the constraint $D(t) \leq D_{max}$ is satisfied and is maximal for the dual problem.

In the case where it is not profitable to buy or sell, we have that $D(t) = 0$ and thus $D_{\text{eff}}(t) = \bar{D}(t)$. This situation occurs when the quantity $\bar{D}(t) \leq 0$, since the hard constraint prevents the agents from buying on the market. On the graph this corresponds to the case where the red and the black curves coincide.

\begin{figure}[htb]
\fbox{
\begin{minipage}[c]{0.95\linewidth}
\begin{center}
\textbf{Convergence results}
\end{center}
\vspace{2em}
\begin{subfigure}[c]{0.5\textwidth}
\centering
\includegraphics[width=5cm,height=4cm]{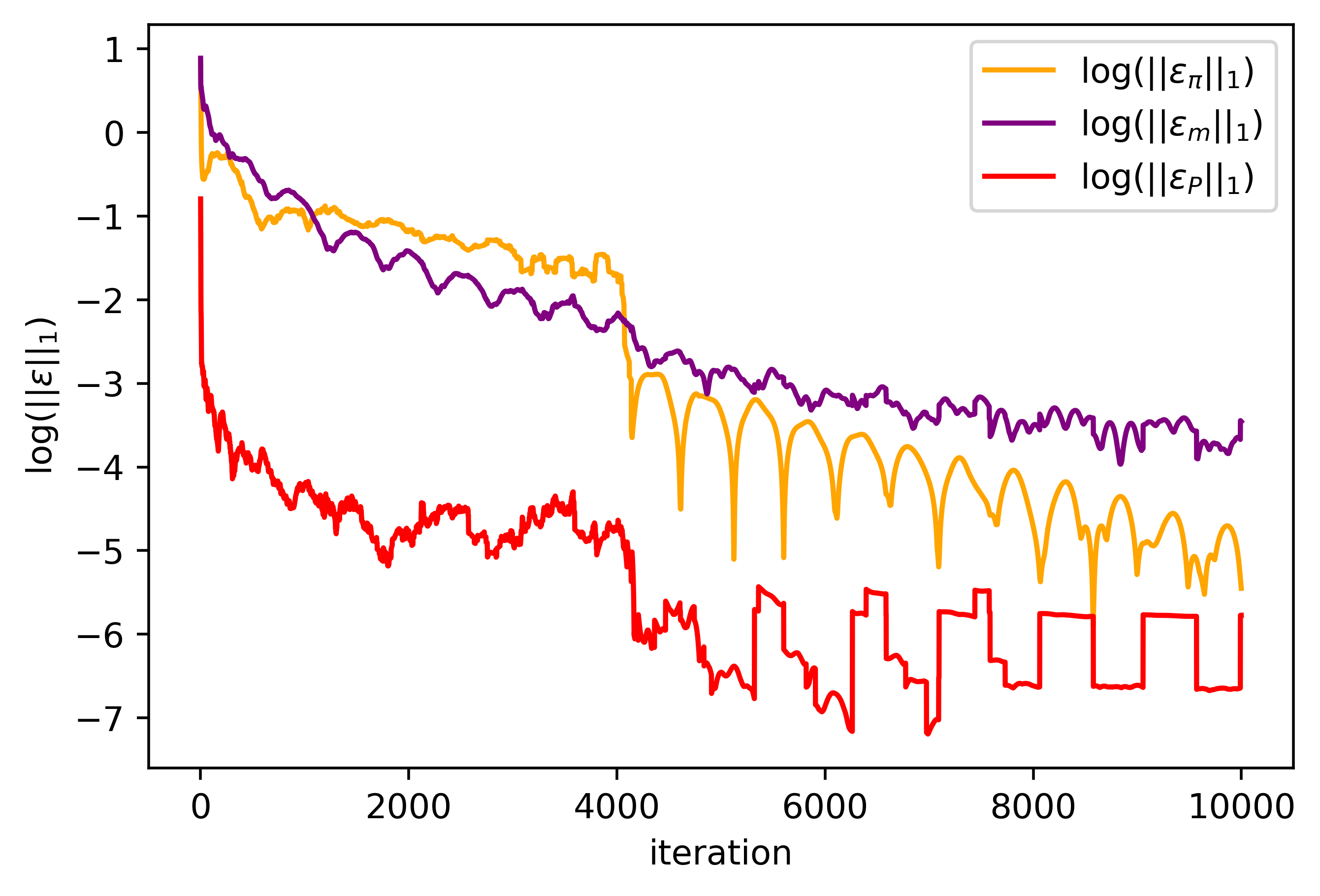}
\caption{ADMM}
\end{subfigure}
\begin{subfigure}[c]{0.5\textwidth}
\centering
 \includegraphics[width=5cm,height=4cm]{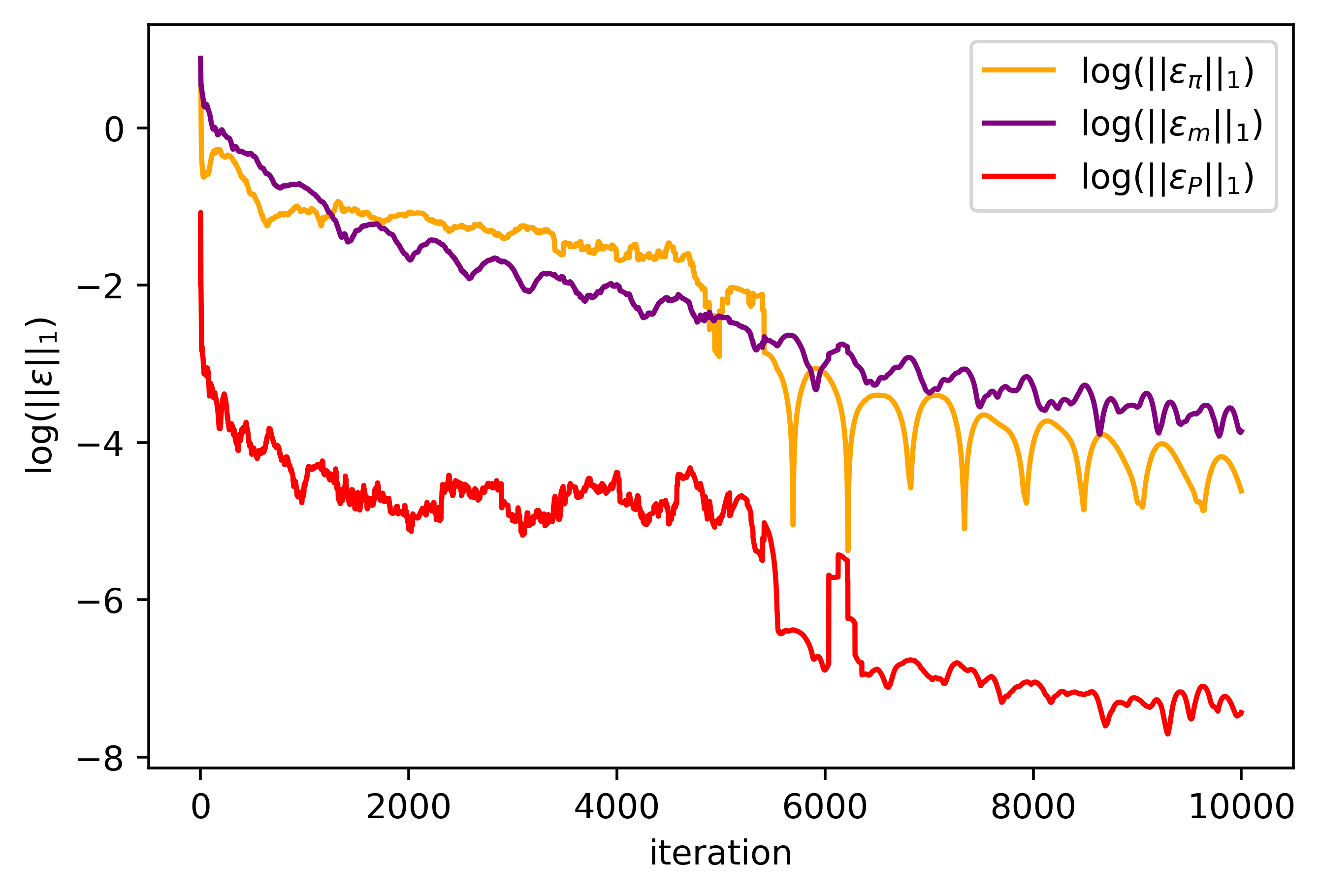} 
 \caption{ADMG}
 \end{subfigure}  \\
 \begin{subfigure}[c]{0.5\textwidth}
 \centering
\includegraphics[width=5cm,height=4cm]{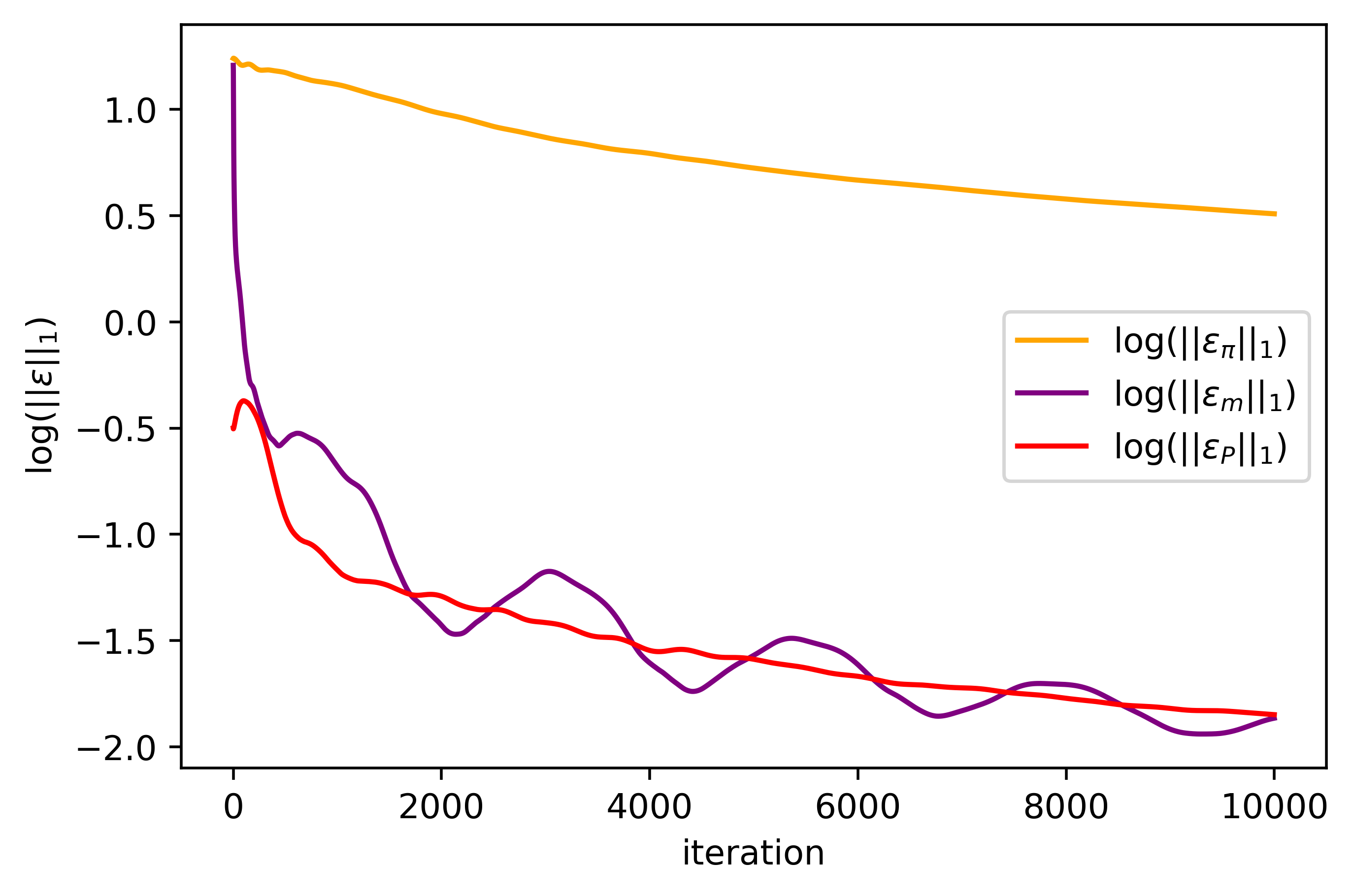} 
\caption{Chambolle-Pock}
\end{subfigure}
 \begin{subfigure}[c]{0.5\textwidth}
 \centering
\includegraphics[width=5cm,height=4cm]{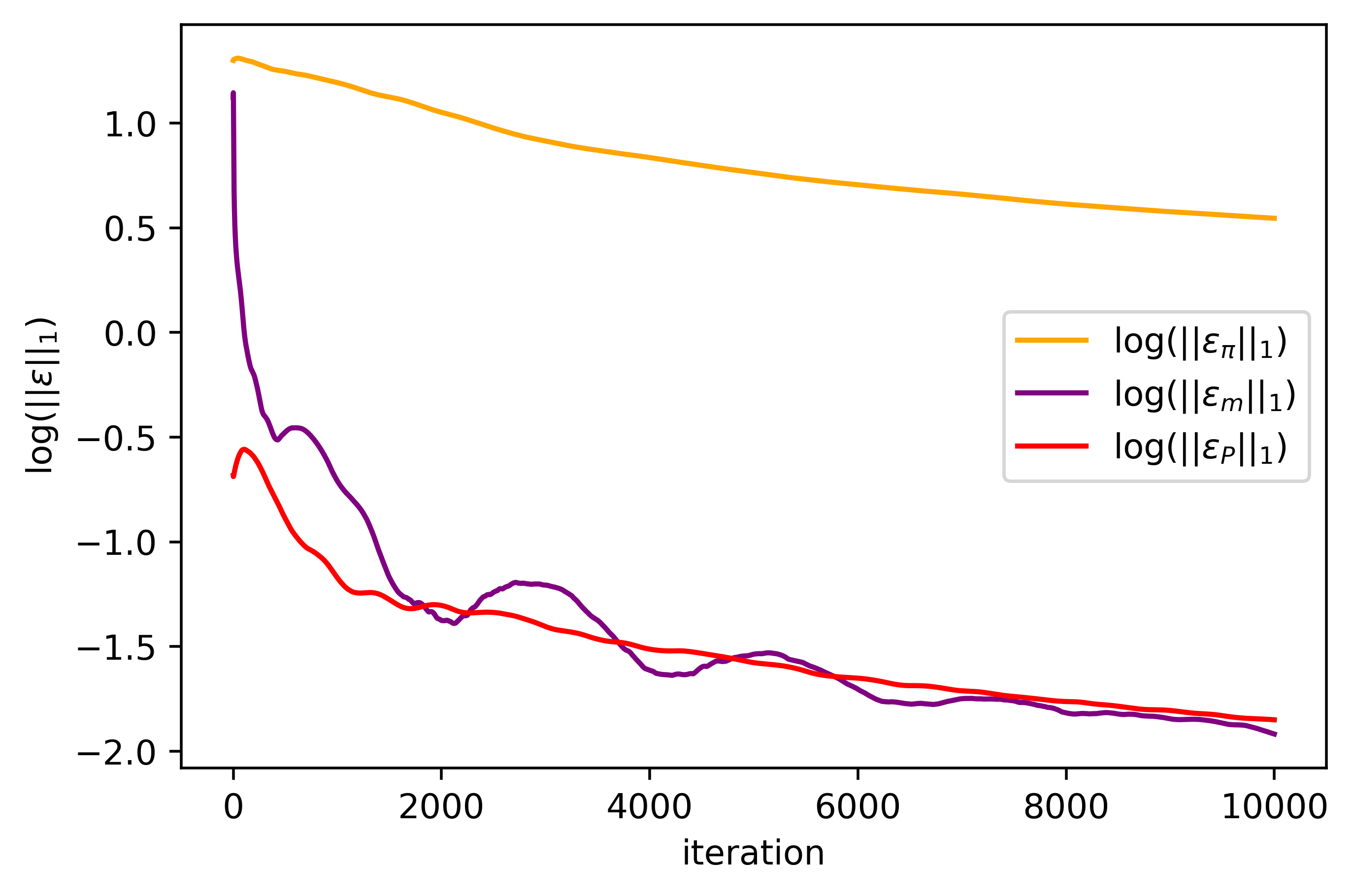}
\caption{Chambolle-Pock-Bregman} 
\end{subfigure}
\caption{Errors plots for Example 2}
\label{fig:error_plots_2}

\vspace{1em}

 The execution time of each algorithm is given in the following table.

\vspace{1em}

\centering
\begin{tabular}{|c|c|c|c|c|}
\hline
& Chambolle-Pock & Chambolle-Pock-Bregman & ADMM & ADM-G \\ \hline
Time (s) & 1700 & 1300 & 2200 & 2200 \\ \hline
\end{tabular}
\caption{Execution time of each algorithm for Example 2 and $N= 10000$}
\end{minipage}
}
\end{figure}

When $\bar{D}(t) \geq 0$ we observe that the red curve is lower than the black curve meaning that a certain amount (given by the blue curve on the following graph) of the demand has been satisfied by the agents. Three effects prevent the agents from fully satisfying the demand: their level of stock, their trading cost and their depletion speed limitation.

At the optimum we observe that the demand $D$ is indeed below the threshold $D_{\text{max}}$, meaning that the constraint is satisfied. 
We now comment the measure $m$, the mean displacement $v$ and the value function $u$. For a given initial distribution of the level of stock, we observe that the measure $m$ is shifted to the left with time. This means that the agents deplete their stocks with time. This is consistent with the mean displacement $v$ where we observe two regimes: either the agents choose to sell as much as possible or the agents choose not to sell on average. The value function $u$ can be interpreted backward. At the end of the game the value is null due to the terminal condition. Then the higher the level of stock, the lower the value function that is to say the value function is increasing in time and decreasing in space. This comes from the definition of $\alpha$ and the constraint $D \leq 0$, which implies that the price is positive.


\appendix

\section{Appendix \label{Appendix}}

We detail here the calculation of the projection on $Q$ and the non-linear proximity operator in \eqref{eq:m1hat-what}, for a running cost of the form
\begin{equation} \nonumber
\ell(t,x,\rho) = \sum_{y\in S} \rho(y) \beta(t,x,y) + \chi_{\Delta(S)}(\rho).
\end{equation}
The adaptation to the case where $\ell$ is defined by \eqref{eq:num_ell} is straightforward.

\subsection{Projection on $Q$ \label{sec:projection}}

We detail the computation of $\proj_Q$, as it appears in \eqref{eq:proj-details} and \eqref{eq:projection}. 
First notice that the projection is decoupled in space and time, then for any  $(t,x) \in \mathcal{T} \times S$ and $(\bar{a},\bar{b}) \in \mathbb{R} \times  \mathbb{R}(S)$, we need to compute
\begin{equation}  \nonumber
\proj_{Q_{t,x}}(\bar{a},\bar{b}) = \argmin_{(a,b) \in Q_{t,x}} \ (a-\bar{a})^{2}/2 + \sum_{y\in S} (b(y)-\bar{b}(y))^2/2,
\end{equation}
where 
$Q_{t,x} = \left\{ (a,b) \in \mathbb{R} \times \mathbb{R}(S), \, a + b(y) - \beta(y) \leq 0 \right\}$.
The corresponding problem is
\begin{equation} \label{eq:proj}
\min_{a \in \mathbb{R}} \, \Bigg( (a-\bar{a})^{2}/2 + \min_{\begin{subarray}{c} b \in \mathbb{R}(S) \\ b(y) \leq \beta(y) - a, \;  \forall y\in S \end{subarray}} \ \,   \sum_{y\in S} (b(y)-\bar{b}(y))^2/2 \Bigg).
\end{equation}
For any $a\in \mathbb{R}$, the solution of the inner minimization problem
is given by
\begin{equation}  \nonumber
b^\star(a,y) := \min\{\bar{b}(y),\beta(y) - a\}, \quad \forall y \in S.
\end{equation}
Then replacing into \eqref{eq:proj}, the minimization problem is now given by
\begin{equation}  \nonumber
\min_{a \in \R} g(a), \quad g(a) := (a-\bar{a})^{2}/2 + \sum_{y\in S} \max(0,a-\tilde{\beta}(y))^2/2,
\end{equation}
where $\tilde{\beta}(y) := \beta(y)-\bar{b}(y)$.
It is now relatively easy to minimize $g$.
Let us sort the sequence $(\tilde{\beta}(y))_{y \in S}$, that is, let us consider $(y_i)_{i\in \{0,\ldots,n-1\}}$ such that
$\tilde{\beta}(y_0) \leq \cdots \leq \tilde{\beta}(y_{n-1})$.
It is obvious that the function $g$ is strictly convex and polynomial of degree 2 on each of the intervals $(-\infty, \tilde{\beta}(y_0))$, $(\tilde{\beta}(y_0),\tilde{\beta}(y_1))$,..., and $(\tilde{\beta}(y_{n-1}),+\infty)$. One can identify on which of these intervals
a stationary point of $g$ exists, by evaluating $\partial g(\tilde{\beta}(y_i)$, for all $i=0,...,n-1$. Then one can obtain an analytic expresison of the (unique) stationary point $a^\star$, which minimizes $g$. Finally, we have
%
$\proj_{Q_{t,x}}(\bar{a},\bar{b})= (a^\star,b^\star(a^\star, \cdot))$.

\subsection{Entropic proximity operator\label{computation:entropic}}

Here we detail the computation of the solution to \eqref{eq:m1hat-what}.
For notational purpose we set
$c_1 = \tau(- u' + \gamma')$
and
$c_2 =\tau(\beta + \bm{A}^\star P' + \bm{S}^\star u')$.
By definition of the running cost $\ell$, we have that
\begin{equation} \nonumber
\sum_{(t,x) \in \mathcal{T} \times S} \tilde{\bm{\ell}}[m_1,w](t,x) = \langle w, \beta \rangle + \chi_{\bfdom(\tilde{\bm{\ell}})}(m_1,w).
\end{equation}
Problem \eqref{eq:m1hat-what} writes
\begin{align*}
\min_{(m_1,w) \in \mathcal{R}}  & \langle m_1, c_1 \rangle + \langle w ,c_2 \rangle  + \frac{1}{\tau} d_{KL}((m_1,w),(m_1',w')) \\\
\text{subject to: } &
\begin{cases}
\begin{array}{l}
m_1(t,x) \leq 1 \\
m_1(t,x) - \sum_{y \in S} w(t,x,y)= 0.
\end{array}
\end{cases}
\end{align*}
%
%
To find the solution, we define the following Lagrangian with associated multipliers $(\lambda_1,\lambda_2) \in \mathbb{R}(\mathcal{T} \times S) \times \mathbb{R}_{+}(\bar{\mathcal{T}} \times S)$:
\begin{align} \nonumber
& \mathcal{L}(m_1,w,\lambda_1,\lambda_2) =  \langle m_1, c_1 \rangle + \langle w ,c_2 \rangle + d_{KL}((m_1,w),(m_1',w')) \\
& \quad + \sum_{(t,x) \in \mathcal{T} \times S} \lambda_1(t,x)\Big( m_1(t,x) - \sum_{y\in S} w(t,x,y) \Big)+ \sum_{(s,x) \in \bar{\mathcal{T}} \times S} \lambda_2(s,x)( m_1(s,x) - 1). \nonumber
\end{align}
 For any $(t,s,x,y) \in \mathcal{T} \times \bar{\mathcal{T}} \times S \times S$, a saddle point of the Lagrangian is given by the following first order conditions,
\begin{equation} \label{eq:system-proj-entropic}
\begin{cases}
\hat{m}_1(T,x) &= m_1'(T,x) \exp (- \lambda_2(T,x) - c_1(T,x)),\\
\hat{m}_1(t,x) &= m_1'(t,x) \exp (-\lambda_1(t,x) - \lambda_2(t,x)-c_1(t,x)),\\
\hat{w}(t,x,y) &= w'(t,x,y)  \exp (\lambda_1(t,x) - c_2(t,x,y)), \\
\hat{m}_1(t,x) &= \sum_{y' \in S}\hat{w}(t,x,y'),\\
0 & = \min\left\{\lambda_2(s,x), \hat{m}_1(s,x) - 1\right\} .
\end{cases} 
\end{equation}

\textbf{Case 1:} $\lambda_2(s,x) > 0$. At time $s=T$ we have that $\hat{m}_1(s,x) = 1$. For any $s<T$ we have that $\hat{m}_1(s,x) = 1$ and $\sum_{y \in S}\hat{w}(s,x,y) = 1$ and by a direct computation we have that
\begin{equation}  \label{system:lambageq0}
\begin{cases}
\hat{m}_1(s,x) & = 1,\\
\hat{w}(s,x,y) & =  w'(s,x,y) \exp (-c_2(s,x,y)) C(s,x), \\
\lambda_1(s,x) & =  \ln \left(C(s,x) \right), \\
\lambda_2(s,x) & = \ln \left(m_1'(s,x)/C(s,x)) \right) - c_1(s,x),
\end{cases} 
\end{equation}
where
$C(s,x) = \sum_{y \in S } w'(s,x,y) \exp(-c_2(s,x,y))$.

\textbf{Case 2:} $\lambda_2(s,x) = 0$. At time $s=T$ we have that $\hat{m}_1(s,x) = m_1'(s,x) \exp (-c_1(s,x))$. For any $s<T$ we have by a direct computation
\begin{equation} \label{system:lamba0}
\begin{cases}
\hat{m}_1(s,x) &= m_1'(s,x) C(s,x)^{-1} \exp (-c_1(s,x)),\\
\hat{w}(s,x,y) &= w'(s,x,y) C(s,x) \exp (- c_2(s,x,y)), \\
\lambda_1(s,x)& =  \ln \left(C(s,x) \right),\\
\lambda_2(s,x) &= 0,
\end{cases} 
\end{equation}
where
$C(s,x) = \Big(m_1'(s,x) \exp (-c_1(s,x)) /  \sum_{y \in S } w'(s,x,y)   \exp(- c_2(s,x,y)) \Big)^{1/2}.
$

In order to identify which of the two cases arises, one can compute a solution with formula \eqref{system:lambageq0} and check a posteriori that $\lambda_2(s,x) > 0$. If this is not the case, we deduce that the solution to \eqref{eq:system-proj-entropic} is given by \eqref{system:lamba0}.


\bibliographystyle{plain}
\bibliography{biblio}

\end{document}